\documentclass[reqno,tbtags,a4paper,12pt]{amsart}

\usepackage{amsmath,amssymb,amsthm,amsfonts,amscd,array,graphicx}

\usepackage[in]{fullpage}

\unitlength=1mm
\setcounter{MaxMatrixCols}{12}
\binoppenalty=1000
\relpenalty=1000

\newcommand{\ba}{\mathbf{a}}
\newcommand{\bb}{\mathbf{b}}
\newcommand{\bc}{\mathbf{c}}
\newcommand{\bd}{\mathbf{d}}
\newcommand{\be}{\mathbf{e}}
\newcommand{\bn}{\mathbf{n}}
\newcommand{\bm}{\mathbf{m}}

\newcommand{\bs}{\mathbf{s}}
\newcommand{\bv}{\mathbf{v}}
\newcommand{\bx}{\mathbf{x}}
\newcommand{\by}{\mathbf{y}}

\newcommand{\blambda}{\boldsymbol{\lambda}}

\newcommand{\sign}{\mathop{\mathrm{sign}}\nolimits}
\newcommand{\dist}{\mathop{\mathrm{dist}}\nolimits}
\newcommand{\spa}{\mathop{\mathrm{span}}\nolimits}

\newcommand{\R}{\mathbb{R}}
\newcommand{\CH}{\mathcal{H}}
\newcommand{\RP}{\mathbb{R}\mathrm{P}}
\newcommand{\E}{\mathbb{E}}

\newcommand{\Z}{\mathbb{Z}}

\newcommand{\X}{\mathbb{X}}
\newcommand{\V}{\mathbb{V}}
\newcommand{\CF}{\mathcal{F}}
\newcommand{\bS}{\mathbb{S}}

\newcommand{\CB}{\mathcal{B}}
\newcommand{\hCB}{\widehat{\mathcal{B}}}

\newcommand{\CV}{\mathcal{V}}

\newtheorem{theorem}{Theorem}[section]
\newtheorem{cor}[theorem]{Corollary}
\newtheorem{lem}[theorem]{Lemma}
\newtheorem{conjecture}[theorem]{Conjecture}
\theoremstyle{definition}
\newtheorem{defin}[theorem]{Definition}
\newtheorem{remark}[theorem]{Remark}

\author{Alexander~A.~Gaifullin}

\thanks{The work is partially supported by RFBR (projects 13-01-12469 and 14-01-00537), by a grant of the President of the Russian Federation (project MD-2969.2014.1), and by a grant from Dmitri Zimin's ``Dynasty'' foundation.}

\title{Embedded flexible spherical cross-polytopes with non-constant volumes}

\address{Steklov Mathematical Institute, Moscow, Russia\newline
${}$\hspace{4.3mm}Lomonosov Moscow State University, Moscow, Russia\newline 
${}$\hspace{4.3mm}Institute for Information Transmission Problems (Kharkevich Institute), Moscow, Russia}

\email{agaif@mi.ras.ru}

\begin{document}

\begin{abstract}
We construct examples of embedded flexible cross-polytopes in the spheres of all dimensions. These examples are interesting from two points of view. First, in dimensions~$4$ and higher, they are the first examples of embedded flexible polyhedra. Notice that, unlike in the spheres, in the Euclidean spaces and the Lobachevsky spaces of dimensions~$4$ and higher, still no example of an embedded flexible polyhedron is known. Second, we show that the volumes of the constructed flexible cross-polytopes are non-constant during the flexion. Hence these cross-polytopes give counterexamples to the Bellows Conjecture for spherical polyhedra. Earlier a counterexample to this conjecture was built only  in dimension~$3$ (Alexandrov, 1997), and was not embedded.  For flexible polyhedra in spheres we suggest a weakening of the Bellows Conjecture, which we call the \textit{Modified Bellows Conjecture}. We show that this conjecture holds for all flexible cross-polytopes of the simplest type among which there are our counterexamples to the usual Bellows Conjecture. By the way, we obtain several geometric results on flexible cross-polytopes of the simplest type. In particular, we write relations on the volumes of their faces of codimensions~$1$ and~$2$.
\end{abstract}


\maketitle

\begin{flushright}
\textit{To Nicolai Petrovich Dolbilin\\ on the occasion of his  seventieth birthday}
\end{flushright}

\section{Introduction}

Let $\X^n$ be one of the three $n$-dimensional spaces of constant curvature, that is,  the Euclidean space~$\E^n$ or the sphere~$\bS^n$ or the Lobachevsky space~$\Lambda^n$. For convenience, we shall always normalize metrics on the sphere~$\bS^n$ and on the Lobachevsky space~$\Lambda^n$ so that their curvatures are equal to~$1$ and~$-1$ respectively. For consistency of terminology,  great spheres in~$\bS^n$ will often be called planes. Spheres in~$\bS^n$ that are not great spheres will be called small spheres.

A \textit{flexible polyhedron\/} in~$\X^n$ is a closed connected  $(n-1)$-dimensional polyhedral surface~$P$ in~$\X^n$ that admits a continuous deformation~$P_u$ such that every face of~$P_u$ remains isometric to itself during the deformation. The surface~$P_u$ is allowed to be self-intersecting. However,  \textit{non-self-intersecting\/} (or \textit{embedded\/}) polyhedra are of a special interest. A precise definition will be given in Section~\ref{section_defin}. 

First flexible polyhedra, namely, flexible octahedra in~$\E^3$ were constructed by Bri\-card~\cite{Bri97}. Moreover, Bricard classified all flexible octahedra in~$\E^3$. In particular, he proved that all they are self-intersecting. The first example of an embedded flexible polyhedron in~$\E^3$ was constructed by Connelly~\cite{Con77}. Individual examples of flexible polyhedra in spaces $\E^4$, $\bS^3$ and~$\Lambda^3$ were constructed by Walz and Stachel, see~\cite{Sta00}, \cite{Sta06}. In a recent paper~\cite{Gai14a}, the author managed to generalize Bricard's results to all spaces of constant curvature of arbitrary dimensions, that is, to construct and to classify flexible cross-polytopes in all spaces~$\E^n$, $\bS^n$, and~$\Lambda^n$. Here and further an $n$-dimensional \textit{cross-polytope\/} is an arbitrary polyhedron of the combinatorial type of the regular cross-polytope, i.\,e., of the regular polytope dual to the $n$-dimensional cube. In particular, a two-dimensional cross-polytope is a quadrangle, and a three-dimensional cross-polytope is an octahedron.

Notice that there exists a very simple construction that allows to build a flexible polyhedron in~$\bS^n$ from every flexible polyhedron in~$\bS^{n-1}$: To do this one just need to take the bipyramid (the suspension) with vertices at the poles of~$\bS^n$ over the given flexible polyhedron lying in the equatorial great sphere $\bS^{n-1}\subset\bS^n$. Since any non-degenerate polygon in~$\bS^2$ with at least four sides is flexible, iterating the above construction, we can easily obtain many examples of flexible polyhedra in~$\bS^n$, including embedded.  Examples of such kind are not interesting. Therefore, it seems to be a right problem to study flexible polyhedra in the open hemisphere~$\bS^n_+\subset\bS^n$.   

Until now, no example of an embedded flexible polyhedron in~$\E^n$, $\bS^n_+$, or~$\Lambda^n$ was known for $n\ge 4$. In the present paper we shall show that there exist embedded flexible cross-polytopes  in the open hemispheres~$\bS^n_+$  of all dimensions. Moreover, we shall prove the following theorem.

\begin{theorem}\label{theorem_main}
For every $n\ge 2$, there exists a flexible cross-polytope $P_u,$ $u\in\overline{\R}=\R\cup\{\infty\},$ in the sphere~$\bS^n$ possessing the following properties:
\begin{enumerate}
\item There is a\/ $\delta>0$ such that the cross-polytope~$P_u$ is embedded whenever $u\in(-\delta,\delta).$
\item $P_0$ is the equatorial great sphere $\bS^{n-1}\subset\bS^n$ with a decomposition into simplices combinatorially equivalent to the boundary of the $n$-dimensional cross-polytope.
\item  $P_u$ is contained in the upper hemisphere~$\bS^n_+$ whenever~$u>0$, and is contained in the lower hemisphere~$\bS^n_-$ whenever $u<0$.
\item $P_{\infty}$ is contained in the equatorial great sphere $\bS^{n-1},$ but is not embedded.
\end{enumerate} 
\end{theorem}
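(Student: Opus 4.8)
The plan is to exhibit the required family explicitly within the family of flexible cross-polytopes of the simplest type constructed in~\cite{Gai14a}, to choose its free parameters in the most symmetric way possible, and then to read off properties~(i)--(iv). I would work in $\bS^n\subset\R^{n+1}$ with coordinates $(x_1,\dots,x_{n+1})$ and the equator $\bS^{n-1}=\{x_{n+1}=0\}$, and parametrise the flexion by $u=\tan(\varphi/2)$, where $\varphi$ is the flexion angle; then $u=0$ and $u=\infty$ correspond to $\varphi=0$ and $\varphi=\pi$, the compactification $\overline{\R}=\R\cup\{\infty\}$ arises naturally, and every vertex coordinate becomes a rational function of $u$ with common denominator $1+u^2$. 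The design requirements are: at $\varphi=0$ the $2n$ vertices $p^{\pm}_i(u)$ should lie in $\{x_{n+1}=0\}$ and form the standard geodesic triangulation of $\bS^{n-1}$ combinatorially equivalent to the boundary of the cross-polytope; and, decisively, the last coordinate of every vertex should equal a strictly positive constant times $u/(1+u^2)$, hence have the sign of $u$ for all $u\neq 0$ and vanish exactly at $u\in\{0,\infty\}$.

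Granting such a family, properties~(ii) and~(iv) become direct computations: at $\varphi=0$ and $\varphi=\pi$ all last coordinates vanish, so $P_0$ and $P_{\infty}$ lie in $\bS^{n-1}$; for $P_0$ one checks that the vertices give exactly the asserted triangulation (so that the defining map is injective, i.e. $P_0$ is embedded), and for $P_{\infty}$ one produces two faces whose relative interiors intersect. Since the remaining coordinates are quadratic in $u$ over $1+u^2$, their values at $u=\infty$ differ from those at $u=0$, so $P_{\infty}$ is a degenerate configuration obtained from $P_0$ by an equatorial reflection; the combinatorics of the cross-polytope then forces an overlap of faces. Property~(iii) follows from the sign of the last coordinate together with geodesic convexity of the open hemisphere: along the shorter great-circle arc joining two points with $x_{n+1}>0$ the coordinate $x_{n+1}$ is a positive combination of the two end-values, hence stays positive; as every face of $P_u$ is the geodesic simplex spanned by its vertices, and all $2n$ vertices have $x_{n+1}$ of the sign of $u$, the whole of $P_u$ lies in $\bS^n_+$ for $u>0$ and in $\bS^n_-$ for $u<0$.

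For property~(i) I would show that $P_u$ is a graph over the equator for $u$ near $0$. The projection $\pi:\bS^n_+\to\bS^{n-1}$, $x\mapsto(x_1,\dots,x_n)/|(x_1,\dots,x_n)|$, carries great-circle arcs contained in $\bS^n_+$ to great-circle arcs, hence carries each face of $P_u$ (which lies in $\bS^n_+$ by~(iii)) to a geodesic simplex. At $u=0$ these simplices tile $\bS^{n-1}$; since a small perturbation of a triangulation of a compact manifold is again a triangulation, and the projected vertices vary continuously with $u$, this persists for $u\in(-\delta,\delta)$ with some $\delta>0$. Then $\pi$ restricts to a homeomorphism $P_u\to\bS^{n-1}$, so $P_u$ is embedded.

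The main obstacle is the very first step: singling out the special sub-family for which the last coordinate of \emph{every} one of the $2n$ vertices keeps the sign of $u$ throughout $u\in(0,\infty)$, not merely near $u=0$. In a typical flexion of the simplest type some vertices rise above the equator while others fall below it, so one must exploit the symmetry imposed on the specialised parameters --- arranging that all vertex heights are, up to positive scalars, one and the same function of $u$ --- and simultaneously check that for every $u$ the specialised data still describes an honest non-degenerate closed cross-polytope. That verification, rather than properties~(ii)--(iv), is where the real work lies.
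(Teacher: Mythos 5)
Your overall strategy is the same as the paper's: take a flexible cross-polytope of the simplest type, which is flat at $u=0$ and $u=\infty$, show the flat position $P_0$ triangulates the equator, deduce embeddedness for small $u$ by perturbation, and get containment in a hemisphere from the sign of the last coordinate. However, the two places where you defer the work are exactly where the substance lies, and one of your design requirements cannot be met as stated. In the parametrization of simplest-type cross-polytopes (formulae \eqref{eq_bd}--\eqref{eq_param_SL}), the $n$ vertices $\ba_i(u)=s_i\bc_i/|\bc_i|$ are \emph{constant in $u$} and lie in the equatorial hyperplane orthogonal to $\bm$ for all $u$; only the $\bb_i(u)$ acquire an $\bm$-component, equal to $s_i'\cdot 2\lambda_iu/(\lambda_i^2u^2+1)$. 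So no choice of parameters makes the last coordinate of \emph{every} vertex a positive multiple of $u/(1+u^2)$: half the vertices are pinned to the equator, and the family only lands in the \emph{closed} hemisphere. The paper repairs this by composing with a $u$-dependent rotation $R_{\alpha(u)}$ about an equatorial $(n-1)$-plane, with $\alpha(u)=\tfrac12\sign(u)\rho(u)$ where $\rho(u)$ is the minimal distance of the $\bb_i(u)$ from the equator; this pushes the $\ba_i$ strictly inside $\bS^n_\pm$ while moving each $\bb_i(u)$ by at most half its clearance, so nothing crosses the equator. You correctly identify this as ``the main obstacle'' but give no construction, and your geodesic-convexity argument for (iii) is sound only once this is supplied.

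The second gap is the assertion that at $u=0$ ``one checks that the vertices give exactly the asserted triangulation.'' For a general simplest-type cross-polytope this is the content of Theorem~\ref{theorem_homeo}, proved by induction on $n$: one needs the dihedral-angle formula \eqref{eq_dihedral} to see that all dihedral angles of $P_0$ equal $\pi$ (forcing the sign pattern $s_is_i'=-1$), that all links are again flat embedded cross-polytopes of the simplest type, hence that $P_0\colon K_n\to\bS^{n-1}$ is a local homeomorphism, and then a covering-space argument using simple connectivity of $\bS^{n-1}$ for $n\ge3$. A purely computational verification is possible only for specially chosen data (e.g.\ $G=E$ with $\lambda_{i+1}/\lambda_i>2n$, where one can estimate each vertex's distance to $\pm\be_i$), and you have not pinned down such a choice. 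Finally, a minor error: $P_\infty$ is \emph{not} the equatorial reflection of $P_0$ --- both are contained in the equator, so that reflection would fix them; since $\bb_i(\infty)\ne\bb_i(0)$, the configurations genuinely differ, and non-embeddedness of $P_\infty$ requires its own argument (in the paper, a degree computation showing $\deg P_\infty=0$ when some $s_is_i'=-1$).
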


This theorem will be proved in Section~\ref{section_embed}.

As it have been mentioned above, the classification of all flexible cross-polytopes in spaces of constant curvature was obtained by the author~~\cite{Gai14a}. However, this classification was given in algebraic terms. So the question of which of the constructed flexible cross-polytopes are self-intersecting and which are embedded is non-trivial and was not considered in~\cite{Gai14a}.  The proof of Theorem~\ref{theorem_main} reduces to showing that some of the flexible cross-polytopes constructed in~\cite{Gai14a} for an appropriate choice of parameters possess the required properties~\mbox{(i)--(iv)}.  We shall show that such cross-polytopes can be found in the class of  \textit{flexible cross-polytopes of the simplest type\/} that were constructed in Section~5 of~\cite{Gai14a} as multi-dimensional generalizations  of Bricard's flexible octahedra of the third type, which are also called  \textit{skew flexible octahedra\/}. The simplest type of flexible cross-polytopes  takes a special place because the flexions of cross-polytopes of this type admit a rational parametrization, while all other types of flexible cross-polytopes have an elliptic parametrization that degenerates to a rational parametrization only for some special values of edge lengths.

One of important problems in the theory of flexible polyhedra is the problem on their volumes related to the so-called \textit{Bellows Conjecture}. This conjecture, which was suggested by Connelly~\cite{Con78} and was proved by Sabitov~\cite{Sab96}--\cite{Sab98b} in 1996, claims that the volume of any flexible polyhedron is constant during the flexion. Since under a polyhedron we mean a polyhedral surface, we need to specify that under  the volume of a polyhedron we mean the volume of the region bounded by this polyhedral surface. If a polyhedron is self-intersecting the usual volume should be replaced by a so-called  \textit{generalized volume} whose precise definition will be given in Section~\ref{section_volume}. An alternative proof of the Bellows Conjecture was given in~\cite{CSW97}. A survey of these proofs and related results and problems can be found in~\cite{Sab11}. 

A multi-dimensional generalization of the Bellows Conjecture, that is, the assertion of the constancy of the volume of an arbitrary flexible polyhedron in~$\E^n$, $n\ge 4$, was proved by the author~\cite{Gai11}, \cite{Gai12}. The question naturally arises if the analogue of the Bellows Conjecture holds in the spheres~$\bS^n$ and in the Lobachevsky spaces~$\Lambda^n$, $n\ge 3$. More precisely, we again should replace the spheres~$\bS^n$ with the open hemispheres~$\bS^n_+$, since in~$\bS^n$ the question is trivial. Indeed, flexible polyhedra in~$\bS^n$ with non-constant volumes can be obtained by taking iterated bipyramids over flexible spherical  polygons with non-constant areas, as was described above. In 1997 Alexandrov~\cite{Ale97} constructed an example of a flexible self-intersecting polyhedron with non-constant volume in the open hemisphere~$\bS^3_+$. The combinatorial type of this polyhedron is the bipyramid over the hexagon. Theorem~\ref{theorem_main} easily yields the following result.

\begin{cor}\label{cor_main}
The volume of the flexible cross-polytope~$P_u$ in Theorem~\ref{theorem_main} is non-constant on any arbitrarily small interval of parameters $(0,u_0)$. Thus, for every~$n\ge 2,$ there exists an embedded flexible cross-polytope with non-constant volume in\/~$\bS^n_+$.
\end{cor}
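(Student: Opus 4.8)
The plan is to deduce the non-constancy of the volume directly from properties (i)--(iv) of Theorem~\ref{theorem_main}, using a continuity/limit argument rather than any explicit volume computation. The key observation is that property (iii) forces the polyhedron $P_u$ into the upper hemisphere for $u>0$, while property (iv) says the limiting position $P_\infty$ lies in the equatorial great sphere $\bS^{n-1}$ but is not embedded. The strategy is to show that if the volume $V(u)$ of the region bounded by $P_u$ were constant on some interval $(0,u_0)$, then by analyticity (the flexion has a rational, hence real-analytic, parametrization, being of the simplest type) it would be constant on all of $(0,\infty)$, and one could then pass to the limit $u\to\infty$ to reach a contradiction with property (iv).

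First I would record that, because $P_u$ is a flexible cross-polytope of the simplest type, the coordinates of its vertices in $\bS^n$ are rational functions of the parameter $u\in\overline{\R}$; consequently the generalized volume $V(u)$ (in the sense of Section~\ref{section_volume}) is a real-analytic function of $u$ wherever $P_u$ is non-degenerate, and in fact extends to a rational function of $u$. Next, by property (i), for $u$ in a small punctured interval $(0,\delta)$ the surface $P_u$ is embedded, so $V(u)$ equals the honest volume of the enclosed region, which is strictly positive; and by property (iii) this region is contained in $\bS^n_+$, hence $0<V(u)<\mathrm{vol}(\bS^n_+)$ there. Now suppose, for contradiction, that $V(u)\equiv c$ is constant on some interval $(0,u_0)$. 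By analyticity $V(u)\equiv c$ for all $u\in(0,\infty)$, and then continuity of $V$ at $u=\infty$ (again from the rational parametrization) gives $V(\infty)=c$ as well. On the other hand, $P_\infty$ lies in the great sphere $\bS^{n-1}$, so the region it bounds has volume zero; more precisely, a surface contained in a hyperplane bounds a chain whose generalized volume vanishes. Hence $c=0$, contradicting $V(u)>0$ on $(0,\delta)$. Therefore $V$ cannot be constant on any interval $(0,u_0)$.

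The main obstacle, and the point that needs the most care, is making rigorous the claim that $V(\infty)=0$: one must check that the generalized-volume functional is genuinely continuous as $u\to\infty$ at the degenerate configuration $P_\infty$, and that its value there really is $0$ because $P_\infty\subset\bS^{n-1}$. Continuity follows once one expresses $V$ via the explicit polynomial/rational formulas for generalized volume in terms of vertex coordinates (available from Section~\ref{section_volume}), since these formulas are manifestly continuous and $P_\infty$ is a genuine, though non-embedded, limiting cross-polytope with well-defined vertices. The vanishing at $P_\infty$ is then the statement that a spherical polyhedral chain supported in a totally geodesic hypersphere $\bS^{n-1}$ encloses zero signed volume, which is immediate from the oriented-volume (mapping-degree) interpretation of the generalized volume: the image of such a chain has empty interior in $\bS^n$. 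The final sentence of the corollary, the existence of an embedded flexible cross-polytope with non-constant volume in $\bS^n_+$, is then obtained for free: fix any $u_*>0$ with $u_*<\delta$; the restriction of the family to a neighbourhood of $u_*$ inside $(0,\delta)$ consists of embedded cross-polytopes lying in $\bS^n_+$, and we have just shown $V$ is non-constant there.
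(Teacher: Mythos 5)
There is a genuine gap at the step you yourself flag as the delicate one. Your argument hinges on the claim that $V(\infty)=0$ because $P_\infty$ is supported in the equatorial great sphere $\bS^{n-1}$, and you justify this by the general principle that ``a spherical polyhedral chain supported in a totally geodesic hypersphere encloses zero signed volume'' since its image has empty interior. That principle is false on the sphere. The generalized volume of a spherical polyhedron is only defined modulo $\sigma_n$, via a function $\varkappa$ that jumps across the surface by the local intersection index; for a cycle contained in $\bS^{n-1}$ the function $\varkappa$ differs on the two open hemispheres by the degree $d$ of the map $K_n\to\bS^{n-1}$, so the generalized volume is $d\,\sigma_n/2 \bmod \sigma_n$, which is \emph{not} zero when $d$ is odd. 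The polyhedron $P_0$ is itself a counterexample to your principle: it also lies in $\bS^{n-1}$ (property (ii)), it has degree $1$ by Theorem~\ref{theorem_homeo}, and its generalized volume is $\sigma_n/2$, not $0$ --- indeed your own reasoning applied at $u=0$ instead of $u=\infty$ would ``prove'' $V(0)=0$, contradicting the continuity of $V$ and the fact that $V(u)\approx\sigma_n/2$ for small $u>0$. To repair your route you would have to show that $\deg P_\infty$ is even for the specific sign data of Theorem~\ref{theorem_main} (it is in fact $0$ here, but that is the content of Lemma~\ref{lem_deg}, which is real work). A secondary inaccuracy: $V(u)$ is real-analytic but does not extend to a rational function of $u$; by Theorem~\ref{theorem_vol} it is $\sigma_n/2+\frac{s_n\sigma_n}{\pi}\arctan(\lambda_n u)$.

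The detour through analyticity and the limit $u\to\infty$ is also unnecessary. The paper argues entirely at $u=0$: since $P_0$ is the equatorial great sphere, the region containing the north pole has volume exactly $\sigma_n/2$, while for $0<u<\delta$ the embedded surface $P_u$ lies in the open hemisphere $\bS^n_+$, so that region has volume strictly less than $\sigma_n/2$. If $V$ were constant on some $(0,u_0)$ its value would be $<\sigma_n/2$, contradicting $\lim_{u\to0^+}V(u)=V(P_0)=\sigma_n/2$ by continuity. You already have all the ingredients for this in your second paragraph (the strict inequality $V(u)<\operatorname{vol}(\bS^n_+)$ and continuity); anchoring the contradiction at $u=0$ rather than $u=\infty$ removes the need for the problematic degree computation altogether.
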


\begin{proof}
For $u$ small enough, the surface~$P_u$ divides the sphere~$\bS^n$ into two parts.  
To speak on the volume of the cross-polytope~$P_u$, we should agree the volume of which of these two parts we consider. The sum of these two volumes is constant and is equal to the volume~$\sigma_n$ of the sphere~$\bS^n$. Since we are interested only in the question on the non-constancy of the volume, it is completely irrelevant for us which of the two volumes to consider. To be specific, we shall agree that the volume $V(P_u)$ of the polyhedron~$P_u$ is the volume of the part that contain the northern pole. Then $V(P_0)=\sigma_n/2$ and $V(P_u)<\sigma_n/2$ whenever $0<u<\delta$, since the polyhedron~$P_u$ is embedded and is contained in~$\bS^n_+$. Hence the function~$V(P_u)$ is non-constant in any neighborhood of zero.
\end{proof}   

Recently the author~\cite{Gai14b} has proved the Bellows Conjecture for flexible polyhedra in odd-dimensional Lobachevsky spaces. The question of whether the Bellows Conjecture is true for flexible polyhedra in even-dimensional spaces remains open. It is well-known that the volumes of polyhedra in the sphere~$\bS^n$ and in the Lobachevsky space~$\Lambda^n$ are closely related to each other. For instance, this occurs in the fact that the functions expressing the volumes of simplices in~$\bS^n$ and in~$\Lambda^n$ from their dihedral angles are obtained from each other (up to a multiplicative constant) by an appropriate analytic continuation~\cite{Cox35}, \cite{Aom77}, see also~\cite{AVS88}. 
Hence the fact that the Bellows Conjecture is true in odd-dimensional Lobachevsky spaces makes rather plausible the assumption that certain proper analogue of the Bellows Conjecture in spheres still should be true. To formulate this conjecture, which we shall refer to as the \textit{Modified Bellows Conjecture,} we shall need a special operation on polyhedra in~$\bS^n$.

First of all, we note that studying flexible polyhedra we can restrict ourselves to studying  only\textit{simplicial\/} flexible polyhedra, i.\,e., such that all their faces are simplices. Indeed, for an arbitrary flexible polyhedron we can decompose its faces into simplices, possibly, adding new vertices. Then the obtained polyhedron will again be flexible. Notice that, for simplicial polyhedra, a deformation preserving the combinatorial type is a flexion if and only if all edge lengths are constant during this deformation. Let $P_u$ be an arbitrary simplicial flexible polyhedron in~$\bS^n$, and let $\ba(u)$ be a vertex of it. Denote by~$-\ba(u)$ the point of~$\bS^n$ antipodal to the point~$\ba(u)$. Consider a new flexible polyhedron~$\widetilde{P}_u$ of the same combinatorial type as~$P_u$ such that all vertices of~$P_u$ except for~$\ba(u)$ and all faces of~$P_u$ not containing~$\ba(u)$ remain vertices and faces of~$\widetilde{P}_u$ respectively, the vertex~$\ba(u)$ of~$P_u$ is replaced by the vertex~$-\ba(u)$ of~$\widetilde{P}_u$, and every face $[\ba(u)\bb_1(u)\ldots\bb_k(u)]$ of~$P_u$ is replaced by the face  $[(-\ba(u))\bb_1(u)\ldots\bb_k(u)]$ of~$\widetilde{P}_u$. We shall say that the flexible polyhedron~$\widetilde{P}_u$ is obtained from the flexible polyhedron~$P_u$ by \textit{replacing the vertex\/~$\ba(u)$ by its antipode.} It is easy to see that the replacements of two different vertices of~$P_u$ by their antipodes commute.

\begin{conjecture}[Modified Bellows Conjecture]\label{con_mod}
Let $P_u$ be an arbitrary flexible simplicial polyhedron in~$\bS^n$. Then we can replace some vertices of~$P_u$ by their antipodes so that the generalized volume of the obtained flexible polyhedron will remain constant during the flexion.
\end{conjecture}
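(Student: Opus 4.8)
The plan is to reduce the conjecture to a vanishing statement for an explicit algebraic differential along the flexion, and then to control that differential by combining the Schläfli formula, the theory of places that underlies the Euclidean multidimensional Bellows theorem, and the analytic relationship between spherical and Lobachevsky volumes. First I would fix the combinatorial type and the edge lengths of the simplicial flexible polyhedron $P_u$, realizing each vertex as a unit vector $\bv_i(u)\in\bS^n\subset\R^{n+1}$, so that $\langle\bv_i(u),\bv_j(u)\rangle$ is constant along every edge $\{i,j\}$. The conditions cutting out these inner products define a real algebraic variety $\CV$, the flexion is a real-analytic arc in one irreducible component, and after complexification I would work over the function field $K$ of that component. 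Writing $V$ for the generalized volume (a signed sum of spherical simplex volumes with a fixed apex), Schläfli's differential formula gives
\begin{equation*}
dV=\frac{1}{n-1}\sum_{F}\mathrm{vol}_{n-2}(F)\,d\theta_F,
\end{equation*}
where $F$ runs over the codimension-$2$ faces and $\theta_F$ is the corresponding dihedral angle. Since $P_u$ is simplicial with fixed edge lengths, each $(n-2)$-face $F$ keeps its isometry type, so every $\mathrm{vol}_{n-2}(F)$ is constant and only the $\theta_F$ vary; as each $\cos\theta_F$ is a rational function of the inner products, $dV$ is an algebraic differential on the flexion curve. Thus constancy amounts to the identical vanishing of the weighted sum of the $d\theta_F$, and the counterexamples of Theorem~\ref{theorem_main} show that this sum does not vanish for $V(P_u)$ itself, so the entire content is that the antipodal replacements must conspire to annihilate it.

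The role of the antipode operation becomes transparent on the Gram matrix $G=(\langle\bv_i,\bv_j\rangle)$: replacing a vertex $\ba$ by $-\ba$ multiplies the corresponding row and column of $G$ by $-1$, that is, sends $\ell\mapsto\pi-\ell$ on every edge through that vertex. The next step is to treat $V$ as a transcendental function of $G$ and to examine its behaviour under all places of $K$, following the strategy by which the author proved the Euclidean Bellows Conjecture in all dimensions. A place sends the vertices either to a finite spherical configuration or to a degenerate one, and it is exactly at the places where some $\langle\ba,\bb\rangle$ degenerate that the naive volume fails to remain integral over the ring generated by the edge data. My expectation is that for a suitable sign pattern $\varepsilon$ on the vertices the modified volume $V(\widetilde{P}_u)$ becomes integral over that ring at every place, hence algebraic over the (constant) field of edge lengths, hence locally constant on the connected flexion; the antipodal flips are precisely the sign choices that cancel the residues of $dV$ at the offending places.

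To carry out this place analysis I would exploit the analytic continuation, due to Coxeter~\cite{Cox35}, Aomoto~\cite{Aom77}, and Alekseevskii--Vinberg--Solodovnikov~\cite{AVS88}, under which the function expressing the spherical simplex volume through its Gram matrix continues to the Lobachevsky volume when $G$ passes to an indefinite signature. The row/column sign changes induced by the antipodal flips move $G$ between these regimes, so a consistent flip pattern corresponds to selecting the branch that transports the problem into Lobachevsky geometry; for odd $n$ one can then invoke the author's theorem~\cite{Gai14b} that the Bellows Conjecture holds in odd-dimensional Lobachevsky spaces to conclude that the continued volume is constant.

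The hard part, and the reason this remains a conjecture, is twofold. First, even in odd dimensions one must prove that a \emph{single} global sign pattern simultaneously regularizes $dV$ at all codimension-$2$ faces at once: the flips of different vertices interact, so establishing that a coherent choice exists is a combinatorial, indeed cohomological, consistency problem rather than a local computation. Second, and more seriously, the even-dimensional case is genuinely open, since the Lobachevsky Bellows Conjecture is not known in even dimensions, so the analytic-continuation route yields nothing there; a proof in even dimensions would demand either a direct places argument not routed through Lobachevsky geometry, or a new input specific to the spherical volume functional.
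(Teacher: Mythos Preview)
The statement you are addressing is a \emph{conjecture}, not a theorem; the paper does not prove it and makes no claim to. What the paper does prove is the very special case of spherical flexible cross-polytopes of the simplest type (the Corollary following Theorem~\ref{theorem_vol}), and the proof there is by direct computation: Theorem~\ref{theorem_vol} gives an explicit closed formula for $\CV(P_u)$ for every choice of signs $(s_1,\ldots,s_n,s_1',\ldots,s_n')$, obtained by combining Schl\"afli's formula with the dihedral-angle formulae of Lemma~\ref{lem_dihedral} and the linear relations on $(n-2)$-face volumes of Theorem~\ref{theorem_n-2}; one then simply observes that flipping vertices so that $s_1s_1'=1$ and $s_2s_2'=-1$ puts the cross-polytope outside all three exceptional cases, forcing $\CV\equiv 0$. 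There is no places argument, no analytic continuation, and no appeal to the Lobachevsky Bellows theorem.

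Your outline is explicitly labeled as a strategy rather than a proof, and you correctly flag the even-dimensional gap. But there is a concrete error in the odd-dimensional part as well. Replacing a vertex $\ba$ by $-\ba$ conjugates the Gram matrix $G=(\langle\bv_i,\bv_j\rangle)$ by a diagonal $\pm 1$ matrix, and conjugation by an invertible matrix preserves the signature. Hence antipodal flips \emph{never} move $G$ from the positive-definite (spherical) regime to an indefinite (Lobachevsky) regime; the sentence ``row/column sign changes induced by the antipodal flips move $G$ between these regimes'' is false, and with it the proposed route through~\cite{Gai14b} collapses. The antipodal operation does change edge lengths ($\ell\mapsto\pi-\ell$) and does alter the volume function, but it keeps you firmly on the sphere; there is no branch selection that transports the problem to $\Lambda^n$. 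So even granting the cohomological consistency problem you identify, the odd-dimensional argument as written does not go through, and the conjecture remains fully open in every dimension, exactly as the paper presents it.
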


As the evidence of the plausibility of this conjecture, we shall show that it is true for all flexible cross-polytopes of the simplest type among which, by Corollary~\ref{cor_main}, there are counterexamples to the usual Bellows Conjecture. The proof will be based on an explicit calculation of the volumes of the flexible cross-polytopes of the simplest type (Theorem~\ref{theorem_vol}).  During this calculation, we shall obtain a series of results on the geometry of flexible cross-polytopes of the simplest type in~$\X^n=\E^n$, $\bS^n$, and~$\Lambda^n$, which, we believe, are of independent interest. In Section~\ref{section_dihedral} we derive formulae for the dihedral angles of flexible cross-polytopes of the simplest type. Each flexible cross-polytope of the simplest type~$P_u$ is flat (i.\,e., is contained in a hyperplane  $\X^{n-1}\subset\X^n$) for the two values $u=0$ and~$u=\infty$ of the parameter. In Section~\ref{section_flat} we prove that  flat cross-polytopes~$P_0$ and~$P_{\infty}$  have certain surprising geometric properties. Namely, the $(n-1)$-dimensional cross-polytopes obtained from~$P_0$ (or~$P_{\infty}$) by deleting different pairs of opposite vertices are either all circumscribed about concentric spheres or satisfy certain other similar properties (Theorem~\ref{theorem_opisan}). For flexible octahedra in the three-dimensional Euclidean space this result was obtained by Bennett~\cite{Ben12}. In Section~\ref{section_volume} we derive linear relations on the volumes of  $(n-1)$-dimensional and $(n-2)$-dimensional faces of flexible cross-polytopes of the simplest type. In the case of the three-dimensional Euclidean space, these relations turn to relations on the areas of faces and the lengths of edges of skew flexible octahedra, which were known to Bricard~\cite{Bri97}.

\section{Definition of flexible polyhedra}\label{section_defin}

\begin{defin}\label{defin_polyh}
A finite simplicial complex~$K$ is called a $k$-dimensional \textit{pseudo-manifold\/} if 
\begin{enumerate}
\item every simplex of~$K$ is contained in a $k$-dimensional simplex of~$K$,
\item every $(k-1)$-dimensional simplex of~$K$ is contained in exactly two $k$-dimensional simplices of~$K$,
\item $K$ is \textit{strongly connected,\/} i.\,e., any two $k$-dimensional simplices of~$K$ can be connected by a finite sequence of $k$-dimensional simplices such that any two consecutive simplices in this sequence have a common  $(k-1)$-dimensional face.
\end{enumerate}
We say that a pseudo-manifold~$K$ is \textit{oriented\/} if all its $k$-dimensional simplices are endowed with orientations such that, for any $(k-1)$-simplex~$\tau$ of~$K$, the orientations induced on~$\tau$ by the chosen orientations of the two $k$-dimensional simplices containing~$\tau$ are opposite to each other.
\end{defin}

\begin{defin}\label{defin_nondeg}
Let $K$ be an oriented $(n-1)$-dimensional pseudo-manifold. A \textit{non-degenerate polyhedron\/} (or \textit{polyhedral surface\/})  of combinatorial type~$K$ in~$\X^n$ is a mapping $P\colon K\to\X^n$ such that
\begin{enumerate}
\item For each simplex  $[v_0\ldots v_l]$ of~$K$ the points $P(v_0),\ldots,P(v_l)$ are independent, i.\,e., do not lie in an $(l-1)$-dimensional plane in~$\X^n$,  and the restriction of~$P$ to the simplex $[v_0\ldots v_l]$ is a homeomorphism onto the convex hull of the points $P(v_0),\ldots,P(v_l)$.
\item $K$ cannot be decomposed into the union of two subcomplexes~$K_1$ and~$K_2$ such that $\dim K_1=\dim K_2=n-1$ and the set $P(K_1\cap K_2)$ is contained in an $(n-2)$-dimensional plane in~$\X^n$.
\end{enumerate}
A polyhedron $P\colon K\to\X^n$ is called \textit{embedded} if~$P$ is an embedding, and is called  \textit{self-intersecting\/} otherwise. The number~$n$ is called the \textit{dimension\/} of the polyhedron~$P$. The images of simplices of~$K$ under the mapping~$P$ are called 
\textit{faces\/} of the polyhedron. Faces of codimension~$1$, i.\,e., of dimension~$n-1$, are called  \textit{facets}. We agree that the whole polyhedron is not a face of itself.
\end{defin}

It is completely irrelevant which homeomorphism is used to map a simplex $[v_0\ldots v_l]$ onto the convex hull of the points $P(v_0),\ldots,P(v_l)$. This means that we do not distinguish between polyhedra~$P$ and~$P'$ of the same combinatorial type~$K$ such that $P(v)=P'(v)$ for all vertices~$v$ of~$K$.

\begin{defin}
A continuous family of mappings $P_u\colon K\to\X^n$ is called a \textit{non-degenerate flexible polyhedron\/} if:
\begin{enumerate}
\item For all but a finite number of~$u$,  $P_u$ is a non-degenerate polyhedron of combinatorial type~$K$. 
\item The lengths of all edges $[P_u(v_1)P_u(v_2)]$ of the polyhedron~$P_u$ are constant as~$u$ varies. 
\item For any two sufficiently close to each other $u_1\ne u_2$, the polyhedra~$P_{u_1}$ and~$P_{u_2}$ are not congruent to each other.
\end{enumerate} 
\end{defin}

Let us give an example showing why we need condition~(ii)  in Definition~\ref{defin_nondeg}. Consider the two-dimensional pseudo-manifold~$K$ with $7$ vertices $\ba_1$, $\ba_2$, $\bb_1$, $\bb_2$, $\bc_1$, $\bc_2$, $\bd$, and $10$ two-dimensional simplices $[\ba_1\ba_2\bb_j]$,  $[\ba_i\bb_j\bc_j]$, $[\ba_i\bc_j\bd]$, $i,j\in\{1,2\}$. Consider the polyhedron $P\colon K\to \E^3$  with the vertices $$P(\ba_i)=((-1)^i,0,0),\quad P(\bb_i)=(0,(-1)^i,0),\quad P(\bc_i)=(0,0,(-1)^i),\quad P(\bd)=(0,0,0).$$
Geometrically this polyhedron is two tetrahedra with a common edge. Naturally, it admits flexions consisting in rotations of these tetrahedra around their common edge. Condition~(ii) is introduced to exclude such examples. Notice that, for an embedded polyhedron, this condition always holds automatically. In the sequel, we always mean that all flexible polyhedra under consideration are non-degenerate without mentioning this explicitly.

It is easy to check that, for flexible cross-polytopes, the condition of non-degeneracy is equivalent to the requirement that none of the dihedral angles is either identically~$0$ or identically~$\pi$ during the flexion. This requirement was imposed in~\cite{Gai14a} in the classification of flexible cross-polytopes, and was called  \textit{essentiality\/}. 

Consider a face $G$ of dimension $k<n-1$ of a polyhedron $P\colon K\to\X^n$. Take a point~$\bx$ in the relative interior of~$G$. (As usually, we shall conveniently agree that the relative interior of a vertex is this vertex itself.) Denote by~$\bS^{n-k-1}_{\bx}$ the sphere of unit vectors orthogonal to~$G$ in the tangent space~$T_{\bx}\X^n$. For each face~$F\supset G$ of~$P$, the cone of tangent vectors to~$F$ at~$\bx$ cuts out in the sphere~$\bS^{n-k-1}_{\bx}$ a spherical simplex of dimension $\dim F-k-1$. These simplices for all faces $F\supset G$ constitute an $(n-k-1)$-dimensional spherical polyhedron in~$\bS^{n-k-1}_{\bx}$, which we shall denote by~$L(G,P)$ and shall call the  \textit{link\/} of~$G$ in~$P$. Up to an isometry, the link of~$G$ is independent of the choice of the point~$\bx$. If $P_u$ is a flexible polyhedron in~$\X^n$, then all faces of~$P_u$ remain congruent to themselves during the flexion. Hence, for each pair of faces $F\supset G$ the spherical simplex corresponding to it also remains congruent to itself. Therefore, $L(G,P_u)$ is a flexible polyhedron in~$\bS^{n-k-1}$.

For an embedded polyhedron in the Euclidean space or in the Lobachevsky space, one can naturally define its \textit{interior dihedral angles\/} at faces of codimension~$2$. For an embedded polyhedron in the sphere this definition is not unambiguous, since we need to specify which of the two components of the complement of the polyhedral surface is called the interior. For a self-intersecting polyhedron, the concept of an interior dihedral angles looses its sense. Hence, for arbitrary polyhedra,  the right object is \textit{oriented dihedral angles\/} which can be defined in the following way. First of all, we need to fix an orientation of the pseudo-manifold~$K$ and an orientation of the space~$\X^n$. For a facet~$\Delta_i$ of a non-degenerate polyhedron $P\colon K\to\X^n$,  the \textit{unit exterior normal vector\/} to it at its point~$\bx$ is, by definition, the unit vector $\bm_i\in T_{\bx}\Delta_i$ orthogonal to~$\Delta_i$ such that the product of the direction of~$\bm_i$ by the orientation of the facet~$\Delta_i$ induced by the given orientation of~$K$ yields the positive orientation of~$\X^n$. Let  $F$ be an $(n-2)$-dimensional face of~$P$, and let $\Delta_1$ and~$\Delta_2$ be the two facets containing~$F$. Choose an arbitrary point $\bx\in F$. Let $\bm_1$ and~$\bm_2$ be the unit exterior normal vectors to the facets~$\Delta_1$ and~$\Delta_2$ respectively at the point~$\bx$. For $i=1,2$, we denote by~$\bn_i$ the unit interior normal vector to the face~$F$   of the simplex~$\Delta_i$ at the point~$\bx$, that is, the unit vector in~$T_{\bx}\Delta_i$ orthogonal to the simplex~$F$ and pointing inside~$\Delta_i$.  Choose a positive rotation direction  around the face~$F$ such that the vector  $\bn_1$ is obtained from the vector~$\bm_1$ by the rotation by the angle~$\pi/2$ in the positive direction. Now, we denote by~$\psi_F$ the rotation angle of the vector~$\bn_1$ to the vector~$\bn_2$ in this positive direction. This angle is well defined up to~$2\pi q$, $q\in\Z$. We shall regard this angle as the element of the group~$\R/(2\pi\Z)$. It is easy to show that the angle~$\psi_F$ is independent of the choice of the point~$\bx$ and of which of the two facets containing~$F$ is denoted by~$\Delta_1$. This angle will be called the  \textit{oriented dihedral angle\/} of the polyhedron~$P$ at the face~$F$.

\section{Flexible cross-polytopes of the simplest type}\label{section_simplest}
In this section we give the construction of flexible cross-polytopes of the simplest type obtained by the author in~\cite[Section 5]{Gai14a}. We shall always identify the Euclidean space~$\E^n$ with the Euclidean vector space~$\R^{n}$, the sphere~$\bS^n$ with the unit sphere in the Euclidean vector space~$\R^{n+1}$, and the Lobachevsky space~$\Lambda^n$ with the half of the two-pole hyperboloid $\langle \bx,\bx\rangle=-1$, $x_0>0$, in the pseudo-Euclidean space~$\R^{n,1}$ with the scalar product
$$
\langle\bx,\by\rangle=-x_0y_0+x_1y_1+\ldots+x_ny_n.
$$
To unify the notation, we shall denote by~$\V$ the spaces~$\R^n$, $\R^{n+1}$, and~$\R^{n,1}$  in the cases $\X^n=\E^n$, $\bS^n$, and~$\Lambda^n$ respectively.

We consider the simplicial complex~$K_n$ with $2n$ vertices~$\ba_1,\ldots,\ba_n$, $\bb_1,\ldots,\bb_n$, such that edges are spanned by all pairs of vertices except for the pairs~$(\ba_i,\bb_i)$, $i=1,\ldots,n$, and simplices are spanned by all sets of vertices that are pairwise joint by edges. The simplicial complex~$K_n$ is isomorphic to the boundary of the regular $n$-dimensional cross-polytope. Any polyhedron of combinatorial type~$K_n$ will be called a \textit{cross-polytope\/}. With some abuse of notation, we shall denote the vertices of~$P$ that are the images of the vertices~$\ba_i$ and~$\bb_i$ of~$K_n$ again by~$\ba_i$ and~$\bb_i$ respectively. We choose the orientation of the simplicial complex~$K_n$ such that the simplex~$[\ba_1\ldots\ba_n]$ is positively oriented. Denote the set~$\{1,\ldots,n\}$ by~$[n]$. For any disjoint subsets~$I,J\subset[n]$, we denote by~$\Delta_{I,J}$ the face of the cross-polytope~$P$ spanned by all vertices~$\ba_i$, $i\in I$, and all vertices~$\bb_j$, $j\in J$. Obviously, $\dim\Delta_{I,J}=|I|+|J|-1$. The fact that the subsets~$I$ and~$J$ in notation like~$\Delta_{I,J}$ are disjoint is always implied and is not stated explicitly.

A flexible cross-polytope~$P_u$ of the simplest type in~$\X^n$ corresponds to a pair $(G,\blambda)$, where 
\begin{enumerate}
\item $G=(g_{ij})$ is a symmetric matrix of size $n\times n$ with units on the diagonal such that all its principal minors of sizes $2\times 2,$ $\ldots,$ $(n-1)\times(n-1)$ are strictly positive, and $\det G>0$, $\det G=0$, and $\det G<0$  in the cases $\X^n=\bS^n$, $\X^n=\E^n$, and $\X^n=\Lambda^n$ respecttively, 
\item $\blambda=(\lambda_1,\ldots,\lambda_n)$  is a row of non-zero real numbers such that $\lambda_i\ne\pm\lambda_j$ unless $i= j$. 
\end{enumerate}

If $\X^n=\bS^n$ or $\Lambda^n$, then the flexible cross-polytope~$P_u$ is built in the following way:

1. Choose vectors $\bn_1,\ldots,\bn_n\in\V$ with the Gram matrix~$G$, and a vector~$\bm\in\V$ orthogonal to them such that $\langle\bm,\bm\rangle=1$. This can be done in a unique way up to a (pseudo)orthogonal transformation of~$\V$.

2. Determine  elements of the matrix $H=(h_{ij})$  of size $n\times n$ by 
\begin{equation*}
h_{ij}=\frac{2\lambda_i(\lambda_ig_{ij}-\lambda_j)}{\lambda_i^2-\lambda_j^2}
\end{equation*}
whenever $i\ne j$, and put $h_{ii}=1$ for all $i$.

3. Take the basis $\bc_1,\ldots,\bc_n$ of the subspace $\spa(\bn_1,\ldots,\bn_n)$ dual to the basis~$\bn_1,\ldots,\bn_n$, and put
\begin{equation}\label{eq_bd}
\bd_i(u)=\sum_{j=1}^nh_{ij}\bc_j-\frac{2\lambda_i^2u^2}{\lambda_i^2u^2+1}\,\bn_i+\frac{2\lambda_iu}{\lambda_i^2u^2+1}\,\bm.
\end{equation}

4. Then the parametrization of the flexion of the cross-polytope is given by
\begin{equation}\label{eq_param_SL}
\ba_i(u)=\frac{s_i\bc_i}{|\bc_i|}\,,\qquad\bb_i(u)=\frac{s_i'\bd_i(u)}{|\bd_i(u)|}\,,
\end{equation}
where the signs~$s_i,s_i'=\pm1$ are chosen arbitrarily in the case $\X^n=\bS^n$, and are chosen so that the points~$\ba_i(u)$ and~$\bb_i(u)$ belong to the connected component~$\Lambda^n$ of the hyperboloid $\langle\bx,\bx\rangle=-1$ in the case $\X^n=\Lambda^n$. (In fact, a straightforward computation shows that the length of the vector~$\bd_i(u)$ is independent of~$u$.)

If $\X^n=\E^n$, then Steps 1 and 2 are the same as above, and Steps~3 and~4 are as follows:

3.  Take a hyperplane $\Pi\subset\E^n$ orthogonal to~$\bm$, and a simplex~$[\ba_1\ldots\ba_n]$  in~$\Pi$ such that the vectors $\bn_1,\ldots,\bn_n$ are orthogonal to the facets of~$[\ba_1\ldots\ba_n]$ opposite to the vertices $\ba_1,\ldots,\ba_n$ respectively. It is easy to see that such simplex is unique up to a homothety and a parallel translation. For each~$i$, take the length of the altitude of the simplex $[\ba_1\ldots\ba_n]$ drawn from the vertex~$\ba_i$, and multiply it by the sign~$s_i$ that is equal to~$+1$ whenever~$\bn_i$ is the interior normal vector to the corresponding facet of $[\ba_1\ldots\ba_n]$, and is equal to~$-1$ whenever~$\bn_i$ is the exterior normal vector to the corresponding facet of $[\ba_1\ldots\ba_n]$. Denote the obtained number by~$a_i$.

4. Then the parametrization of the flexion of the cross-polytope is given by
\begin{gather}\label{eq_param_E}
\ba_i(u)=\ba_i,\qquad \bb_i(u)=b_i\left(\sum_{j=1}^n\frac{h_{ij}\ba_j}{a_j}-\frac{2\lambda_i^2u^2}{\lambda_i^2u^2+1}\,\bn_i+\frac{2\lambda_iu}{\lambda_i^2u^2+1}\,\bm\right),\\ b_i=\left(\sum_{j=1}^n  \frac{h_{ij}}{a_j}\right)^{-1}.\label{eq_param_E2}
\end{gather}
In this case, we denote by~$s_i'$ the sign of the number~$b_i$, $i=1,\ldots,n$.
 
\begin{remark}\label{rem_all}
In the Euclidean and the spherical cases, the above construction yields a flexible cross-polytope if and  only if none of the denominators in the formulae written above vanishes, hence, for all pairs~$(G,\blambda)$ satisfying the above conditions~(i) and (ii) off some subset of positive codimension. For the Lobachevsky space the situation is somewhat more difficult. Namely, the cross-polytope is well defined only if the vectors~$\bd_i(u)$ computed by~\eqref{eq_bd} turn out to be time-like.
\end{remark} 
 
\begin{remark}
The fact that the formulae written above actually yield flexible polyhedra, i.\,e., that the lengths of all edges actually remain constant during the obtained deformations was proved in~\cite{Gai14a}. However, indeed, this fact can be checked immediately by a simple calculation without the usage of results of~\cite{Gai14a}. 
\end{remark}

\begin{remark}\label{rem_dihedral}
In~\cite{Gai14a} the author has also shown that, for flexible cross-polytopes of the simplest type described above, the dihedral angles adjacent to the face~$[\ba_1\ldots\ba_n]$ vary during the flexion so that the tangents of the halves of any two of them are either directly or inversely proportional to each other. Moreover, in the same paper it has been shown that, provided that $\X^n\ne \E^2$, this property is characteristic for flexible polyhedra of the simplest type. Namely, if the dihedral angle adjacent to a facet of a non-degenerate flexible cross-polytope vary so that the tangents of the halves of any two of them are either directly or inversely proportional to each other, then the flexion of this cross-polytope can be parametrized as indicated above. In the Euclidean plane~$\E^2$,  besides flexible cross-polytopes (quadrangles) of the simplest type, this characteristic property is also fulfilled for flexible parallelograms, which cannot be obtained by the construction described above, see Lemma~4.8 and Remark~4.9 in~\cite{Gai14a}.
\end{remark}

It is easy to see that, for each~$i$, the simultaneous changing signs of the numbers~$\lambda_i$, $s_i$, and $s_i'$, all matrix elements $g_{ij}=g_{ji}$ such that $j\ne i$, and the vectors~$\bm$ and~$\bn_i$ does not change the cross-polytope~$P_u$. Hence, without loss of generality, we may assume that all coefficients~$\lambda_i$ are positive. Besides, renumbering the vertices of the cross-polytope, we may achieve that  $0<\lambda_1<\cdots<\lambda_n$. In the sequel, we shall always assume that these inequalities are satisfied.

In the rest of this paper, $P_u$ is always a flexible cross-polytope of the simplest type, $(G,\blambda)$ is the corresponding data, and $\bs=(s_1,\ldots,s_n,s_1',\ldots,s_n')$ is the corresponding row of signs.

\section{Dihedral angles}\label{section_dihedral}

To provide that the oriented dihedral angles of the cross-polytope~$P_u$ are well defined, we need to choose the orientation of the space~$\X^n$. We shall conveniently choose this orientation so that the  vector~$\bm$ is the interior normal vector to the simplex~$[\ba_1\ldots\ba_n]$ if $s_1\ldots s_n=1$, and   is the exterior normal vector to the simplex~$[\ba_1\ldots\ba_n]$ if  $s_1\ldots s_n=-1$. The sign  $s_1\ldots s_n$ is introduced to provide that in the spherical case the orientation of the sphere~$\bS^n$ does not change if we replace some vertices of~$P_u$ with their antipodes.

Each $(n-2)$-dimensional face of~$P_u$ has the form $\Delta_{I,J}$, where  $|I|+|J|=n-1$. We put $\psi_{I,J}(u)=\psi_{\Delta_{I,J}}(u)$.

For each $k\in[n]$, we consider the set
$$
X_k=\big\{i\in [n]\mid \big((i<k)\wedge (s_is_i'=1)\big)\vee\big((i>k)\wedge (s_is_i'=-1)\big)\big\},
$$
where $\wedge$ and~$\vee$ denotes the logical ``and'' and ``or'' respectively.

\begin{lem}\label{lem_dihedral}
If either $n\ge 3$ or $n=2$ and $\X^2=\bS^2,$ then the dihedral angles of the cross-polytope~$P_u$ are given by
\begin{equation}\label{eq_dihedral}
\psi_{I,J}(u)=(-1)^{|J\cap X_k|}s_k\varphi_k(u)+\left\{
\begin{aligned}
&0 &&\text{if $s_ks_k'=1,$}\\
&\pi &&\text{if $s_ks_k'=-1,$}\\
\end{aligned}
\right.
\end{equation}
where $k$ is a unique element of the set $[n]\setminus(I\cup J),$ and  $\varphi_k(u)=2\arctan(\lambda_ku)$.
\end{lem}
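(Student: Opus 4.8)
The plan is to compute each oriented dihedral angle $\psi_{I,J}(u)$ directly from the definition in Section~\ref{section_defin}, using the explicit parametrization \eqref{eq_bd}--\eqref{eq_param_SL} (and its Euclidean counterpart \eqref{eq_param_E}--\eqref{eq_param_E2}). The key observation is that the face $\Delta_{I,J}$ with $|I|+|J|=n-1$ is contained in exactly two facets, $\Delta_{I\cup\{k\},J}$ and $\Delta_{I,J\cup\{k\}}$, where $k$ is the unique index outside $I\cup J$; these differ only in whether the vertex $\ba_k$ or $\bb_k$ is adjoined. So the whole dihedral angle question localizes to understanding how $\ba_k(u)$ and $\bb_k(u)$ sit relative to the hyperplane spanned by the common vertices. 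I would first treat the spherical case $\X^n=\bS^n$ and then indicate the (easier) modifications for $\E^n$ and $\Lambda^n$.

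First I would set up the linear algebra at the face $\Delta_{I,J}$. Let $W=\spa(\bn_1,\dots,\bn_n)$, so $\bm\perp W$, and recall that $\bc_1,\dots,\bc_n$ is the dual basis to $\bn_1,\dots,\bn_n$ in $W$. The vertices of $\Delta_{I,J}$ are the $\ba_i=s_i\bc_i/|\bc_i|$ for $i\in I$ and the $\bb_j=s_j'\bd_j(u)/|\bd_j(u)|$ for $j\in J$; crucially, by \eqref{eq_bd}, $\bd_j(u)$ lies in the span of the $\bc_\ell$ and $\bm$, with the $\bn_j$-component expressible through the $\bc$'s as well, so after a short computation each $\bd_j(u)$ lies in $\spa(\bc_1,\dots,\bc_n,\bm)$. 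The $(n-2)$-plane through $\Delta_{I,J}$ together with the rotation axis is thus controlled by the $2$-dimensional orthogonal complement, and the angle $\psi_{I,J}(u)$ is the angle between the projections of $\ba_k(u)$ and $\bb_k(u)$ (or rather the appropriate interior normals) onto that $2$-plane. The $u$-dependence enters only through $\bd_k(u)$, whose $\bn_k$- and $\bm$-components are $-\tfrac{2\lambda_k^2u^2}{\lambda_k^2u^2+1}$ and $\tfrac{2\lambda_ku}{\lambda_k^2u^2+1}$. Writing $\varphi_k=2\arctan(\lambda_k u)$, one has $\cos\varphi_k=\tfrac{1-\lambda_k^2u^2}{1+\lambda_k^2u^2}$ and $\sin\varphi_k=\tfrac{2\lambda_k u}{1+\lambda_k^2u^2}$, so the $\bm$-component is exactly $\sin\varphi_k$ and the $\bn_k$-component is $\cos\varphi_k-1$; this is where the half-angle $2\arctan(\lambda_k u)$ materializes, and it identifies $\varphi_k(u)$ as (up to sign and a constant) the rotation angle. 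I expect that, after choosing the orientation of $\X^n$ as prescribed at the start of Section~\ref{section_dihedral} (so that $\bm$ is the interior normal to $[\ba_1\dots\ba_n]$ when $s_1\cdots s_n=1$), the bare dihedral angle at $\Delta_{[n]\setminus\{k\},\varnothing}$ comes out to be precisely $s_k\varphi_k(u)$ when $s_ks_k'=1$ and $s_k\varphi_k(u)+\pi$ when $s_ks_k'=-1$; this handles the case $J=\varnothing$.

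The remaining work is the combinatorial bookkeeping: passing from the ``reference'' face $\Delta_{[n]\setminus\{k\},\varnothing}$ to a general $\Delta_{I,J}$ (same $k$) by swapping, one at a time, a vertex $\ba_i$ for its ``opposite'' $\bb_i$. Each such swap either preserves or reverses the positive rotation direction around the axis, and I would show the sign flip is governed exactly by the product of $s_is_i'$ over the swapped indices, refined by whether $i<k$ or $i>k$ — which is precisely the content of the set $X_k$ and the factor $(-1)^{|J\cap X_k|}$ in \eqref{eq_dihedral}. Concretely, I would track how replacing $\ba_i$ by $\bb_i$ changes the orientation of the facet simplices $\Delta_{I\cup\{k\},J}$, $\Delta_{I,J\cup\{k\}}$ (coming from the fixed orientation of $K_n$), hence changes the exterior normals $\bm_1,\bm_2$ and the interior normals $\bn_1,\bn_2$ in the definition of $\psi_F$, hence flips the measured angle; the condition that $\bb_i$ is the image of $\bd_i(u)$ normalized with sign $s_i'$, while $\ba_i$ carries sign $s_i$, is what makes $s_is_i'$ the relevant quantity, and the inequalities $\lambda_1<\dots<\lambda_n$ (together with the $(i<k)/(i>k)$ dichotomy) determine the ordering of the relevant vectors around the circle. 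The main obstacle I anticipate is not any single computation but getting all the signs to line up consistently — the orientation of $K_n$, the orientation of $\X^n$, the choice of positive rotation direction around $F$, the signs $s_i,s_i'$, and the arctangent branch — so that the formula holds uniformly in the group $\R/(2\pi\Z)$; I would manage this by fixing a base case (say $I=[n]\setminus\{k\}$, $J=\varnothing$, all signs $+1$) where everything can be checked by hand, and then proving the general case by an induction on $|J|$ in which a single vertex-for-antipode swap produces exactly the claimed sign change. For $\X^n=\E^n$ the same computation goes through with $\ba_i(u)=\ba_i$ fixed in the hyperplane $\Pi$ and $\bb_i(u)$ given by \eqref{eq_param_E}, the vector $\bm$ playing the role of the normal to $\Pi$; for $\X^n=\Lambda^n$ one replaces the round normalization by the hyperboloid normalization, using that $|\bd_i(u)|$ is $u$-independent (as noted after \eqref{eq_param_SL}), and the trigonometric identities for $\varphi_k$ are unchanged because the $\bn_k$- and $\bm$-components in \eqref{eq_bd} are literally the same rational functions of $u$. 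The excluded case $n=2$, $\X^2=\E^2$ is exactly the one where flexible parallelograms intrude (Remark~\ref{rem_dihedral}), which is why the hypothesis ``$n\ge 3$ or $\X^2=\bS^2$'' is needed.
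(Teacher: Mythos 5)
Your base case ($J=\emptyset$) is sound and in fact more self-contained than the paper's: the observation that the $\bn_k$- and $\bm$-components of $\bd_k(u)$ are $\cos\varphi_k-1$ and $\sin\varphi_k$, together with $\langle\bd_k(0),\bn_k\rangle=h_{kk}=1$, shows that $\bd_k(u)$ is $\bd_k(0)$ rotated by $\varphi_k$ in the $(\bn_k,\bm)$-plane, and since $\bn_k$ spans the normal to $\Delta_{[n]\setminus\{k\},\emptyset}$ inside $\spa(\bc_1,\dots,\bc_n)$ this does give the dihedral angle adjacent to $[\ba_1\ldots\ba_n]$ up to the sign conventions. (The paper instead imports $\tan(\psi_k/2)=\pm(\lambda_ku)^{\pm1}$ from the classification in the earlier paper and then pins the signs.)

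The gap is in your ``combinatorial bookkeeping'' step, which carries essentially all of the difficulty of the lemma. Replacing $\ba_l$ by $\bb_l$ does not merely reorient a face: $\Delta_{U\cup\{l\},W}$ and $\Delta_{U,W\cup\{l\}}$ are geometrically distinct $(n-2)$-simplices with distinct supporting planes and distinct pairs of adjacent facets, so there is no a priori reason that $\psi_{U,W\cup\{l\}}=\pm\psi_{U\cup\{l\},W}$ at all; tracking how the fixed orientation of $K_n$ induces orientations on facets can only ever explain a sign, never the coincidence of the magnitudes. The paper proves this identity by observing that the link of the $(n-3)$-face $\Delta_{U,W}$ is a flexible \emph{spherical} quadrangle whose angles at two adjacent vertices have (by the inductive hypothesis) proportional or inversely proportional half-angle tangents, invoking Bricard's classification of such quadrangles into three types, and reading off the relation between opposite oriented angles. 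Moreover, the determination of \emph{which} sign occurs --- i.e.\ whether $l\in X_k$, which is where the dichotomy $l<k$ versus $l>k$ enters --- comes from the inequality that the angle sum of a spherical quadrangle exceeds $2\pi$ (respectively, comparisons of arcs in Fig.~\ref{fig_3polyg}) combined with $\lambda_l\lessgtr\lambda_k$. Your sketch contains no mechanism that could produce either ingredient: a direct computation of $\psi_{I,J}(u)$ for $J\ne\emptyset$ would require controlling the $u$-dependent plane spanned by the $\bd_j(u)$, $j\in J$, which you do not attempt, and the orientation-tracking you propose is insensitive to the ordering of the $\lambda_i$. As written, the argument would not close.
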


\begin{proof}
First of all, we consider the dihedral angles $\psi_k(u)=\psi_{I\setminus\{k\},\emptyset}(u)$ adjacent to the facet~$\Delta_{[n],\emptyset}=[\ba_1\ldots\ba_n]$. For them, formula~\eqref{eq_dihedral} was substantially obtained in the author's paper~\cite{Gai14a}. Indeed, in this paper, evaluating the parametrization of the cross-polytopes of the simplest type, we have used the variables~$t_k$ that have been originally defined as the tangents of the halves of the dihedral angles adjacent to the facet $[\ba_1\ldots\ba_n]$. Then, to simplify the formulae, we have applied several special algebraic transformations, which we have called \textit{elementary reversions.} As a result of these transformations each of the variables~$t_k$ have been replaced with one of the values~$\pm t_k^{\pm 1}$. Afterwards, the new variables~$t_k$ have been parametrized by $t_k=\lambda_ku$. Thus, the results of~\cite{Gai14a} imply immediately that $\tan(\psi_k(u)/2)=\pm(\lambda_ku)^{\pm 1}$, i.\,e., that  $\psi_k(u)$ is one of the angles  $\pm\varphi_k(u)$, $\pm\varphi_k(u)+\pi$. (Recall that the angle~$\psi_k(u)$ is defined modulo~$2\pi\Z$.) It remains to show that the sign~$\pm$ at~$\varphi_k(u)$ is equal to~$s_k$, and the summand $\pi$ is present if and only if $s_ks_k'=-1$. The sign of the tangent of one half of the angle~$\psi_k(u)$ is equal to the sign of the sinus of the angle~$\psi_k(u)$, hence, is equal to the sign of the scalar product $\langle\bb_k(u),\bm\rangle$, which is equal to $s_k'\sign(\lambda_ku)$. Therefore, the dihedral angle~$\psi_k(u)$ equals either $s_k'\varphi_k(u)$ or $\pi-s_k'\varphi_k(u)$. Further, from formulae~\eqref{eq_bd}--\eqref{eq_param_E2}, which provide the parametrization for the flexion of the cross-polytope~$P_u$, one can easily deduce that $\dist_{\X^n}(\ba_k,\bb(0))<\dist_{\X^n}(\ba_k,\bb(\infty))$ whenever $s_ks_k'=1$ and $\dist_{\X^n}(\ba_k,\bb(0))>\dist_{\X^n}(\ba_k,\bb(\infty))$ whenever $s_ks_k'=-1$. Thus, if $s_ks_k'=1$, then $\psi_k(0)=0$ and $\psi_k(\infty)=\pi$, hence, $\psi_k(u)=s_k\varphi_k(u)$, and if $s_ks_k'=-1$, then $\psi_k(0)=\pi$ and $\psi_k(\infty)=0$, hence, $\psi_k(u)=s_k\varphi_k(u)+\pi$.

Now, let $U,W\subset[n]$ be subsets such that $U\cap W=\emptyset$ and $|U|+|W|=n-2$, and let $k$ and~$l$ be the two distinct elements of the set $[n]\setminus(U\cup W)$. Consider the formula
\begin{equation}\label{eq_neigh_dihedral}
\psi_{U,W\cup\{l\}}=\left\{
\begin{aligned}
-&\psi_{U\cup\{l\},W}&&\text{if $l\in X_k$,}\\
&\psi_{U\cup\{l\},W}&&\text{if $l\notin X_k$.}
\end{aligned}
\right.
\end{equation}
We shall proof formulae~\eqref{eq_dihedral} and~\eqref{eq_neigh_dihedral} by the simultaneous induction on~$|J|$ and~$|W|$ respectively. Formula~\eqref{eq_dihedral} has already been proved for $|J|=0$. First, we shall show that formula~\eqref{eq_dihedral} for all pairs~$(I,J)$ such that $|J|=p$ implies formula~\eqref{eq_neigh_dihedral} for all pairs~$(U,W)$ such that $|W|=p$. Second, we shall show that formula~\eqref{eq_neigh_dihedral} for all pairs~$(U,W)$ such that $|W|=p$ and formula~\eqref{eq_dihedral} for all pairs~$(I,J)$ such that $|J|=p$ imply formula~\eqref{eq_dihedral} for all pairs~$(I,J)$ such that $|J|=p+1$. As a result we shall complete the proofs of formulae~\eqref{eq_dihedral} and~\eqref{eq_neigh_dihedral}.

1. Suppose that formula~\eqref{eq_dihedral} holds true for all pairs~$(I,J)$ such that $|J|=p$. Consider an arbitrary pair~$(U,W)$ such that  $|W|=p$. If $n\ge 3$, then the link $L_{U,W}(u)=L(\Delta_{U,W},P_u)$ is a spherical quadrangle. We denote by~$A$, $B$, $C$, and~$D$ the vertices of this quadrangle corresponding to the $(n-2)$-dimensional faces~$\Delta_{U\cup\{l\},W}$, $\Delta_{U\cup\{k\},W}$, $\Delta_{U,W\cup\{l\}}$, and~$\Delta_{U,W\cup\{k\}}$ respectively.  Then $A$, $B$, $C$, and~$D$ are consecutive  vertices of the quadrangle~$L_{U,W}(u)$ (in the cyclic order). 

In the exceptional case of $n=2$ and $\X^2=\bS^2$, the condition $|U|+|W|=n-2=0$ implies immediately that $U=W=\emptyset$. Hence the face $\Delta_{U,W}=\Delta_{\emptyset,\emptyset}$ does not exist. Neverteless, we shall conveniently use the convention that the cross-polytope~$P_u$ has an additional empty face~$\Delta_{\emptyset,\emptyset}$, which is assigned formally the dimension~$-1$. The link of this face is, by definition, the cross-polytope~$P_u$ itself. Thus, in the case of  $n=2$ and $\X^2=\bS^2$, the link $L_{U,W}(u)=L_{\emptyset,\emptyset}(u)$ is again a spherical quadrangle with the vertices $A=\Delta_{\{l\},\emptyset}=\ba_l$, $B=\Delta_{\{k\},\emptyset}=\ba_k$, $C=\Delta_{\emptyset,\{l\}}=\bb_l$, and $D=\Delta_{\emptyset,\{k\}}=\bb_k$, where $(k,l)=(1,2)$ or $(2,1)$.

The oriented angles $\psi_A(u)$, $\psi_B(u)$, $\psi_C(u)$, and~$\psi_D(u)$ of the spherical quadrangle $L_{U,W}(u)$ at the vertices~$A$, $B$, $C$, and~$D$ respectively are equal to~$\psi_{U\cup\{l\},W}(u)$, $\psi_{U\cup\{k\},W}(u)$, $\psi_{U,W\cup\{l\}}(u)$, and~$\psi_{U,W\cup\{k\}}(u)$ respectively. By the inductive assumption, formula~\eqref{eq_dihedral} holds true for the pairs $(U\cup\{l\},W)$ and~$(U\cup\{k\},W)$. Hence, during the flexion of the quadrangle~$L_{U,W}(u)$, its angles at the vertices~$A$ and~$B$ vary in such a way that the tangents of their halves are either directly or inversely proportional to each other. Flexible spherical quadrangles with this property were completely described by Bricard~\cite[\S II]{Bri97} in his study of flexible octahedra in~$\E^3$. (Bricard considered tetrahedral angles in~$\E^3$ instead of spherical quadr\-ang\-les, but, obviously, these two objects are equivalent.) By the result of Bricard, for each such flexible spherical quadrangle, either its opposite sides are pairwise equal to each other or the sum of the lengths of each pair of its opposite sides is equal to~$\pi$. There exist three types of such quadrangles; they are shown in Fig.~\ref{fig_3polyg}. The vertices of these quadrangles are denoted by~$P$, $Q$, $R$, and~$S$ instead of~$A$, $B$, $C$, and~$D$, since they can be identified with~$A$, $B$, $C$, and~$D$ in different ways, which differ from each other by cyclic permutations. For quadrangles of the first and of the second types shown in Fig.~\ref{fig_3polyg}(a) and (b) respectively, we have $PQ=RS$ and $QR=SP$, and the tangents of the halves of neighbor angles are inversely proportional to each other during the flexion. Hence these cases occur when $s_ks_k'=-s_ls_l'$. For quadrangles of the third type shown in Fig.~\ref{fig_3polyg}(c), we have $PQ+RS=QR+SP=\pi$,  and the tangents of the halves of neighbor angles are directly proportional to each other during the flexion. Hence these cases occur when $s_ks_k'=s_ls_l'$.

\begin{figure}
\begin{center}
\includegraphics[scale=.4]{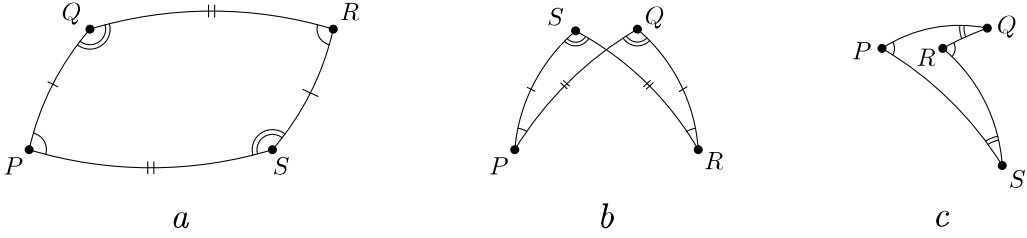}
\end{center}
\caption{Three types of flexible spherical quadrangles with either directly or inversely proportional to each other tangents of the half-angles}\label{fig_3polyg}
\end{figure}

Assume that the quadrangle $L_{U,W}(u)$ is of the first type.  Then
\begin{equation}\label{eq_psi+}
\psi_{U,W\cup\{l\}}(u)=\psi_C(u)=\psi_A(u)=\psi_{U\cup\{l\},W}(u).
\end{equation} 
Let us show that, in this case,  $l\notin X_k$.
Take $u>0$. If $s_ks_k'=1$ and $s_ls_l'=-1$, then formula~\eqref{eq_dihedral} for the pairs $(U\cup\{l\},W)$ and $(U\cup\{k\},W)$ implies that $\psi_A(u)=\pm\varphi_k(u)$ and $\psi_B(u)=\pm\varphi_l(u)+\pi$. Hence, the \textit{unoriented\/} interior angles of the quadrangle~$L_{U,W}(u)$ are equal to $\varphi_k(u)$ and $\pi-\varphi_l(u)$. Since the sum of the angles of a spherical quadrangle is greater than~$2\pi$, this implies tha $\varphi_k(u)> \varphi_l(u)$, therefore, $l<k$. Thus, $l\notin X_k$. If $s_ks_k'=-1$ and $s_ls_l'=1$, then the interior angles of the quadrangle $L_{U,W}(u)$ are equal to $\pi-\varphi_k(u)$ and $\varphi_l(u)$, hence, $l>k$, and again $l\notin X_k$.

If the quadrangle $L_{U,W}(u)$ is of the second type, then 
\begin{equation}\label{eq_psi-}
\psi_{U,W\cup\{l\}}(u)=\psi_C(u)=-\psi_A(u)=-\psi_{U\cup\{l\},W}(u).
\end{equation}
Similarly to the previous case, we easily deduce that $l\in X_k$ from the fact that the sum of the angles indicated by one and two arcs in Fig.~\ref{fig_3polyg}(b) is less than~$\pi$.

Assume that the quadrangle $L_{U,W}(u)$ is of the third type. If the vertex~$A$ is identified either with~$P$ or with~$R$, then equality~\eqref{eq_psi-} holds true, and if $A$ is identified either with~$Q$ or with~$S$, then equality~\eqref{eq_psi+} holds true. Similarly to the previous cases, since the angle indicated by one arc is greater than the angle indicated by two arcs in Fig.~\ref{fig_3polyg}(c), it follows easily that $l\in X_k$ whenever $A$ is either~$P$ or~$R$, and $l\notin X_k$ whenever $A$ is either~$Q$ or~$S$.

Thus, we see that equality~\eqref{eq_neigh_dihedral} holds true in all cases.

2. Suppose that formula~\eqref{eq_neigh_dihedral} holds true for all pairs~$(U, W)$ such that $|W|\le p$, and formula~\eqref{eq_dihedral} holds true for all pairs~$(I,J)$ such that $|J|=p$. Consider an arbitrary pair~$(I,J)$ such that $|J|=p+1$. Then formula~\eqref{eq_dihedral} for the pair~$(I,J)$ follows immediately from formula~\eqref{eq_dihedral} for the pair~$(I\cup\{l\},J\setminus\{l\})$ and formula~\eqref{eq_neigh_dihedral} for the pair $(I,J\setminus\{l\})$, where  $l$ is an arbitrarily element of~$J$.
\end{proof}

For each $(n-2)$-dimensional face $F=\Delta_{I,J}$, we shall conveniently introduce the notation
\begin{equation*}
\lambda_F=(-1)^{|J\cap X_{k}|}s_{k}\lambda_{k},
\end{equation*}
where $k$ is a unique element of the set~$[n]\setminus(I\cup J)$.
Then $\psi_F(u)=2\arctan(\lambda_Fu)$ whenever $s_{k}s_{k}'=1$, and $\psi_F(u)=2\arctan(\lambda_Fu)+\pi$ whenever $s_{k}s_{k}'=-1$.

\begin{remark}
For flexible cross-polytopes (quadrangles) of the first type in~$\E^2$ or~$\Lambda^2$, the above proof does not work. Indeed, in these cases the quadrangle $L_{\emptyset,\emptyset}(u)=P_u$ would be either Euclidean or hyperbolic rather than spherical, while we have used substantially that it is  spherical, for instance, claiming that the sum of its angles is greater than~$2\pi$ in the case shown in Fig.~\ref{fig_3polyg}(a). In fact, one can show that formulae~\eqref{eq_dihedral}  still hold true for flexible quadrangles of the simplest type in~$\E^2$, but do not hold true for some flexible quadrangles of the simplest type in~$\Lambda^2$. We shall not do this in the present paper, since the case of  quadrangles of the simplest type in~$\E^2$ and~$\Lambda^2$  does not represent a serious self-interest, and will not be used in the sequel.
\end{remark} 
 
\section{Embedded spherical flexible cross-polytopes}\label{section_embed} 

\begin{theorem}\label{theorem_homeo}
Let $P_u\colon K_n\to\bS^n$ be a spherical flexible cross-polytope of the simplest type corresponding to a triple  $(G,\blambda,\bs)$ such that $0<\lambda_1<\cdots<\lambda_n$ and $s_is_i'=-1$ for all~$i$. Then the mapping~$P_0$ is the homeomorphism of~$K_n$ onto the equatorial sphere~$\bS^{n-1}\subset\bS^n$. Hence, the mapping~$P_u$ is an embedding for all sufficiently small~$u$. 
\end{theorem}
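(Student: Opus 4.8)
The plan is to analyze the parametrization \eqref{eq_param_SL} directly at $u=0$ and show that $P_0$ is a genuine (degenerate-into-the-equator) simplicial sphere, then invoke the general principle that a nondegenerate deformation of an embedded complex stays embedded for small parameter. First I would compute $\bd_i(0)$. From \eqref{eq_bd} the last two terms vanish at $u=0$, so $\bd_i(0)=\sum_j h_{ij}\bc_j$, which lies in $\spa(\bn_1,\ldots,\bn_n)$; in particular every vertex $\ba_i(0)$ and $\bb_i(0)$ lies in the great sphere $\bS^{n-1}=\bS^n\cap\spa(\bn_1,\ldots,\bn_n)$, and $\bm$ is orthogonal to this subspace — so $P_0$ maps $K_n$ into the equatorial $\bS^{n-1}$, giving claim (ii) of Theorem~\ref{theorem_main} along the way. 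The crux is to show the map $P_0\colon K_n\to\bS^{n-1}$ is actually a homeomorphism onto $\bS^{n-1}$, i.e. that the $2n$ vertices $\pm\ba_i(0)$-type points are placed so that the $2^n$ simplices $\Delta_{I,J}$ with $|I|+|J|=n$ have pairwise disjoint interiors and cover $\bS^{n-1}$.

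Here is where the hypothesis $s_is_i'=-1$ and the ordering $0<\lambda_1<\cdots<\lambda_n$ enter. In the dual-basis coordinates, $\bc_i$ has a single nonzero coordinate in the $\bn_i$-direction after passing to the frame where $(\bc_i)$ and $(\bn_i)$ are mutually dual, so $\ba_i(0)=s_i\bc_i/|\bc_i|$ sits "on the positive $i$-th axis", while $\bb_i(0)=s_i'\bd_i(0)/|\bd_i(0)|$ with $\bd_i(0)=\sum_j h_{ij}\bc_j$. I would show that the off-diagonal signs of $H$, together with $s_is_i'=-1$, force $\bb_i(0)$ to lie in the open half-space $\{x_i<0\}$ relative to the appropriate hyperplane, i.e. $\ba_i(0)$ and $\bb_i(0)$ lie strictly on opposite sides of the coordinate plane they straddle; concretely this reduces to checking the sign of a determinant built from $G$ and the $\lambda_i$, using positivity of the principal minors of $G$ (condition (i)) and $\lambda_i\ne\pm\lambda_j$ (condition (ii)). Once the $2n$ vertices are seen to be an "antipodal-like" configuration spanning the $2n$ closed coordinate hemispheres of $\bS^{n-1}$ coherently, a standard convexity / winding-number argument (or induction on $n$ via the links $L(\Delta_{I,J},P_0)$, which are lower-dimensional instances of the same picture by the link computation recalled in the excerpt) shows the $2^n$ simplices tile $\bS^{n-1}$ without overlap, so $P_0$ is a homeomorphism.

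For the last sentence of the statement — $P_u$ embedded for all small $u$ — I would argue as follows. Embeddedness is an open condition: since $P_0$ is a homeomorphism onto $\bS^{n-1}$ and the family $P_u$ is continuous (indeed real-analytic in $u$ by \eqref{eq_bd}--\eqref{eq_param_SL}), for $u$ in a small interval around $0$ the images of any two simplices of $K_n$ that are disjoint (resp. meet in a common face) in $P_0$ remain disjoint (resp. meet only in the image of that common face); one checks this simplex-pair by simplex-pair, finitely many conditions, each stable under small perturbation. Hence $P_u$ remains a homeomorphism onto its image for small $u$, i.e. is an embedding. The main obstacle I anticipate is the overlap-free tiling step for $P_0$: verifying that the $2^n$ simplices genuinely cover $\bS^{n-1}$ with disjoint interiors, rather than, say, doubly covering part of it. I expect the cleanest route is the induction on dimension using links, where the inductive hypothesis handles the combinatorics and only the single sign computation showing $\ba_i(0),\bb_i(0)$ are separated by the right hyperplane needs genuinely new work.
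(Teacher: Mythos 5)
Your setup is consistent with the paper: at $u=0$ the last two terms of \eqref{eq_bd} vanish, so all vertices land on the equatorial great sphere, and the final step (embeddedness is an open condition, so $P_u$ stays embedded for small $u$) is exactly how the paper concludes. But the central claim --- that $P_0$ is a homeomorphism of $K_n$ onto $\bS^{n-1}$ --- is not actually established in your proposal. The sign computation you defer to (``$\ba_i(0)$ and $\bb_i(0)$ lie strictly on opposite sides of the coordinate plane they straddle'') is never carried out, and more importantly it would not suffice even if true: separating each antipodal pair of vertices by a hyperplane does not preclude two facets $\Delta_{I,J}$ from overlapping, nor does it preclude $P_0$ from covering $\bS^{n-1}$ more than once. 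The ``standard convexity / winding-number argument'' you invoke for the tiling is precisely the content that needs proof, and you correctly flag it as the main obstacle without resolving it.

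The ingredient you are missing is Lemma~\ref{lem_dihedral}. Since $s_ks_k'=-1$ for all $k$, formula \eqref{eq_dihedral} gives $\psi_{I,J}(0)=2\arctan(\lambda_k\cdot 0)+\pi=\pi$ for every $(n-2)$-face, i.e.\ $P_0$ is locally flat along all codimension-$2$ faces; this is what makes $P_0$ a local homeomorphism there. For lower-dimensional faces one argues by induction on $n$ exactly via the links you mention: the link $L(\Delta,P_u)$ is again a flexible cross-polytope of the simplest type whose dihedral angles all become $\pi$ at $u=0$, so its sign data satisfies $\tilde s_i\tilde s_i^{\,\prime}=-1$ and the inductive hypothesis applies. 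Once $P_0$ is a local homeomorphism at every point of the compact complex $K_n$, it is a finite-sheeted covering of $\bS^{n-1}$, and for $n\ge 3$ simple connectivity forces it to be one-sheeted --- this is how the paper excludes the ``double covering'' scenario, not by a convexity argument. The base case $n=2$ needs a separate observation (each side of the quadrangle has length $<\pi$, so the perimeter is $<4\pi$ and $P_0$ cannot wind twice around the great circle), which your proposal also omits. As written, the proof has a genuine gap at its core.
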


\begin{proof}
We shall proof the assertion of the theorem by the induction on the dimension~$n$.

\textsl{Basis of induction:\/} $n=2$. By~\eqref{eq_dihedral}, all angles of the spherical quadrangle~$P_0$ are equal to~$\pi$. Hence this quadrangle is a decomposition of the great circle~$\bS^1\subset\bS^2$ into four arcs. Thus, the assertion of the theorem is true. Notice that in this reasoning it is important that the length of every side of the quadrangle~$P_0$ is strictly less than~$\pi$. This implies that the perimeter of~$P_0$ is less than~$4\pi$, hence $P_0$ cannot ``wind'' on the great circle~$\bS^1$ two or more times.

\textsl{Inductive step.} Suppose that $n\ge 3$. Assume that the assertion of the theorem is true for $m$-dimensional flexible  cross-polytopes of the simplest type for all $m<n$,  and prove the assertion of the theorem for an   $n$-dimensional flexible  cross-polytope of the simplest type~$P_u$. Recall that a continuous mapping $f\colon X\to Y$ of topological spaces is called a \textit{local homeomorphism\/} at a point~$x\in X$, if it maps homeomorphically a neighborhood of~$x$ in~$X$ onto a neighborhood of~$f(x)$ in~$Y$. Let us prove that the mapping $P_0\colon K_n\to\bS^{n-1}$ is a local homeomorphism at all points of~$K_n$. For points in the interiors of $(n-1)$-dimensional simplices, this follows immediately, since all faces of~$P_0$ are non-degenerate. For points in the interiors of $(n-2)$-dimensional simplices, this also follows immediately, since, by~\eqref{eq_dihedral}, the dihedral angles at all $(n-2)$-dimensional faces of~$P_0$ are equal to~$\pi$.
Consider an $(n-k-1)$-dimensional simplex~$\Delta$ of~$K_n$, where $k>1$, and a point~$\bx$ in its relative interior. With some abuse of notation, we denote the face of~$P_u$ that is the image of the simplex~$\Delta$ again by~$\Delta$. The oriented dihedral angles of the link $L(\Delta,P_u)$ at its $(k-2)$-dimensional faces are equal to the oriented dihedral angles of~$P_u$ at the corresponding  $(n-2)$-dimensional faces of it. Therefore, the link $L(\Delta,P_u)$ is itself a flexible spherical cross-polytope of the simplest type, and all its dihedral angles become equal to~$\pi$ for $u=0$. Hence, after a renumbering of its vertices, this cross-polytope corresponds to a triple~$\bigl(\widetilde{G},\widetilde{\blambda},\widetilde{\bs}\bigr)$, where $0<\widetilde{\lambda}_1<\cdots< \widetilde{\lambda}_k$. It follows from~\eqref{eq_dihedral} that $\widetilde{s}_i\widetilde{s}^{\,\prime}_i=-1$, $i=1,\ldots,k$. Consequently, by the inductive assumption, we obtain that the surface~$L(\Delta,P_0)$ is embedded and coincides with the $(k-1)$-dimensional great sphere $\bS^{k-1}_{\bx}$ cut in~$\bS^k_{\bx}$ by the tangent space to the great sphere~$\bS^{n-1}$ containing~$P_0$. Therefore, the mapping $P_0\colon K_n\to\bS^{n-1}$ is a local homeomorphism at~$\bx$.

Thus, the mapping $P_0\colon K_n\to\bS^{n-1}$ is a local homeomorphism at all points of~$K_n$. Since the simplicial complex~$K_n$ is compact, it follows that this mapping is a finite-sheeted non-ramified covering, see, for instance,~\cite[Sect.~V.2]{Mas67}. But $n\ge 3$, hence, the sphere~$\bS^{n-1}$ is simply connected, therefore, this covering is a homeomorphism, cf.~\cite[Sect.~V.6]{Mas67},~\cite[\S 12]{Pra04}. Obviously a polyhedron sufficiently close to an embedded polyhedron is embedded too. Therefore, since the cross-polytope~$P_u$ is embedded for~$u=0$, we obtain that it is embedded for all sufficiently small~$u$.
\end{proof}
 
\begin{remark}
Similarly, it can be proved that the mapping~$P_{\infty}$ is a homeomorphism onto the equatorial sphere~$\bS^{n-1}$ whenever $s_is_i'=1$ for all~$i$.
\end{remark} 
 
\begin{proof}[Proof of Theorem~\ref{theorem_main}.]
Consider a flexible cross-polytope of the simplest type~$P_u$ in $\bS^n$ corresponding to a triple~$(G,\blambda,\bs)$, where $G$ is a positive definite symmetric matrix with units on the diagonal, $0<\lambda_1<\cdots<\lambda_n$, $s_i=-1$ and $s_i'=1$, $i=1,\ldots,n$. This cross-polytope is well defined for almost all pairs~$(G,\blambda)$, see Remark~\ref{rem_all}. By Theorem~\ref{theorem_homeo}, $P_u$ is embedded for sufficiently small~$u$, that is, satisfy property~(i) in Theorem~\ref{theorem_main}. Formulae~\eqref{eq_bd},~\eqref{eq_param_SL} giving a parametrization of the flexion of~$P_u$ imply immediately that $P_u$ also satisfies properties~(ii) and~(iv) in Theorem~\ref{theorem_main} and ``almost satisfies'' property~(iii). Namely, for~$u>0$, ~$P_u$ is contained in the \textit{closed\/} positive hemisphere~$\overline{\bS_+^n}\subset\bS^n$ consisting of all unit vectors that have non-negative scalar products with~$\bm$, and for $u<0$, $P_u$ is contained in the corresponding \textit{closed\/} negative hemisphere~$\overline{\bS_-^n}$.

A flexible cross-polytope that, in addition, satisfy property~(iii) can be obtained by the rotation of the whole sphere~$\bS^n$. In the space~$\R^{n+1}$ containing the sphere~$\bS^n$, we choose a vector~$\mathbf{k}$ orthogonal to~$\bm$ and forming acute angles with the vectors $\bc_1,\ldots,\bc_n$. For instance, we can take $\mathbf{k}=\bn_1+\cdots+\bn_n$. Consider the $(n-1)$-dimensional vector subspace  $U\subset\R^{n+1}$ orthogonal to~$\bm$ and~$\mathbf{k}$.
Denote by $R_{\alpha}$ the rotation around~$U$ by angle~$\alpha$, where the direction of the rotation is chosen so that, for small positive~$\alpha$, the vectors~$R_{\alpha}(\bc_i)$ form obtuse angles with~$\bm$.

For each $u$, we denote by~$\rho(u)$ the smallest of the distances (in the metric of~$\bS^n$) from the vertices $\bb_1(u),\ldots,\bb_n(u)$ of~$P_u$ to the equatorial great sphere~$\bS^{n-1}$ orthogonal to~$\bm$. Then $\rho\colon\overline{\R}\to\R$ is a continuous function such that~$\rho(0)=\rho(\infty)=0$ and $\rho(u)>0$ unless $u= 0,\infty$.

Consider the flexible cross-polytope $$\widetilde{P}_u=R_{\alpha(u)}(P_u),\qquad \alpha(u)=\frac{1}{2}\sign(u)\rho(u).$$ Let us show that it satisfies properties~(i)--(iv) in Theorem~\ref{theorem_main}. Since the cross-polytope~$\widetilde{P}_u$ is obtained from~$P_u$ by a rotation, and besides $\widetilde{P}_0=P_0$ and $\widetilde{P}_{\infty}=P_{\infty}$, the properties~(i), (ii) and~(iv) for~$\widetilde{P}_u$ follow from the same properties for~$P_u$. Let us prove property~(iii). The definition of the rotations~$R_{\alpha}$ implies immediately that the vertices $\tilde\ba_i(u)=R_{\alpha(u)}(\ba_i)$ lie in the open hemisphere~$\bS^n_+$ whenever $u>0$,  and in the open hemisphere~$\bS^n_-$ whenever $u<0$. The vertices~$\bb_i(u)$ of the initial cross-polytope~$P_u$ also lie in~$\bS^n_+$ whenever $u>0$ and in~$\bS^n_-$ whenever $u<0$. Under the rotation by~$\rho(u)/2$ these vertices are shifted by distances that are no more than~$\rho(u)/2$, hence, they cannot leave these hemispheres. Therefore, $\widetilde{P}_u$ is contained in~$\bS^n_+$ whenever $u>0$ and in~$\bS^n_-$ whenever $u<0$.
\end{proof}

\begin{remark}
In some partial cases,  the proof of Theorem~\ref{theorem_homeo} can be obtained without using topological facts and without formulae~\eqref{eq_dihedral} for the oriented dihedral angles. Consider a special case of the cross-polytopes of the simplest type corresponding to the unit matrix $G=E$,  coefficients $0<\lambda_1<\cdots<\lambda_n$, and the signs 
$s_i=-1$ and $s_i'=1$ for all~$i$. In addition, assume that the coefficients~$\lambda_i$ satisfy inequalities 
$\lambda_{i+1}/\lambda_i>2n$, $i=1,\ldots,n-1$. 
For the vectors $\bm,\bn_1,\ldots,\bn_n$ we can take the vectors $\be_0,\ldots,\be_n$ that form the standard orthonormal basis of~$\R^{n+1}$.  From the explicit formulae~\eqref{eq_bd},~\eqref{eq_param_SL} parametrizing the flexion of the cross-polytope under consideration, we can easily deduce the estimates
\begin{equation*}
\dist_{\bS^n}(\ba_i,-\be_i)<\arcsin\frac1{\sqrt{n}}\,,\qquad \dist_{\bS^n}(\bb_i(0),\be_i)<\arcsin\frac1{\sqrt{n}}
\end{equation*} 
Further, it is not hard to show that any cross-polytope satisfying these estimates is embedded. This already implies  Theorem~\ref{theorem_main}. 
\end{remark}

\section{Flat positions}\label{section_flat}

Any flexible cross-polytope~$P_u$ of the simplest type has two  \textit{flat\/} positions, $P_0$ and $P_{\infty}$. The word ``flat'' means  ``contained in a hyperplane $\X^{n-1}\subset\X^n$''. In this section, we shall study the geometric properties of the flat cross-polytopes~$P_{0}$ and~$P_{\infty}$.  We denote by~$P_{(i)}$ the $(n-1)$-dimensional cross-polytope in~$\X^{n-1}$ obtained from~$P_0$ by removing the vertices~$\ba_i$ and~$\bb_i$, i.\,e., the $(n-1)$-dimensional cross-polytope with the vertices $\ba_1,\ldots,\widehat\ba_i,\ldots,\ba_n$, $\bb_1,\ldots,\widehat\bb_i,\ldots,\bb_n$. 

Recall that an  \textit{orisphere\/} in the Lobachevsky space~$\Lambda^m$ is a hypersurface $\Omega$ given in the vector model by the equation $\langle\bx,\bv\rangle=const$  for some isotropic vector $\bv\in\R^{n,1}$. The point on the absolute corresponding to the vector~$\bv$ is called the \textit{centre\/} of the orisphere~$\Omega$.
An \textit{equidistant hypersurface\/} in~$\Lambda^m$ or~$\bS^m$ is a connected component of the set of points lying on the fixed non-zero distance from the given hyperplane~$H$, which is called the \textit{base\/} of this equidistant hypersurface. In~$\bS^m$, equidistant hypersurfaces are small spheres. 

Let $P$ be an $m$-dimensional cross-polytope in~$\X^m$, and let $\Omega$ be a sphere or an orisphere or an equidistant hypersurface in~$\X^m$. We shall say that the cross-polytope~$P$ is  \textit{circumscribed\/} about~$\Omega$ if the hyperplane of every facet of~$P$ tangents~$\Omega$. (We do not require that the tangent point is inside the facet.) A sphere or an orisphere or an equidistant hypersurface about which a cross-polytope~$P$ is circumscribed yields a decomposition of the facets of~$P$ into two classes in the following way. Choose some orientations of the hypersurface~$\Omega$ and of the polyhedral surface~$P$. To the tangent point  of~$\Omega$ and the hyperplane of a facet~$F$ of~$P$ we assign the sign~$+$ if their orientations coincide at the tangent point, and the sign~$-$ otherwise. All facets of~$P$ are decomposed into two classes~$\CF_+$ and $\CF_-$ depending on these signs. If we reverse either the orientation of~$\Omega$ or the orientation of~$P$, then the classes~$\CF_+$ and $\CF_-$ will be interchanged. The obtained decomposition without specifying which of the two classes is positive and which is negative will be called the  \textit{decomposition given by\/~$\Omega$.}  

Now, assume that the hyperplanes of all facets of a cross-polytope~$P$ intersect a hyperplane  $H\subset\X^m$ by the same angle $\alpha\in(0,\pi/2)$.  Since $\alpha<\pi/2$, we see that, at the intersection point of the hyperplane of the facet~$F$ and the hyperplane~$H$, the orthogonal projection provides the correspondence between the orientations of these two hyperplanes. Thus, we again obtain the decomposition of the facets of~$P$ into two classes. This decomposition will be called the \textit{decomposition given by\/~$H$.} Exactly in the same way, one can obtain a  decomposition of the facets of~$P$  into two classes if $\X^m=\Lambda^m$ and the hyperplanes of all facets of~$P$ are parallel to the same hyperplane~$H$. (Recall that two hyperplanes in~$\Lambda^m$ are called \textit{parallel\/} if they have no common points in~$\Lambda^m$ and have exactly one common point on the absolute.)

{\sloppy
\begin{theorem}\label{theorem_opisan}
For each flexible cross-polytope of the simplest type~$P_u$ in\/~$\X^n,$ $n\ge 3,$ we have one of the following  two possibilities:
\begin{enumerate}
\item The cross-polytopes $P_{(1)},\ldots,P_{(n)}$ are circumscribed about concentric  $(n-2)$-dimen\-sio\-nal spheres or orispheres $\Omega_1,\ldots,\Omega_n$  respectively in~$\X^{n-1},$ and besides  then the spheres $\Omega_1,\ldots,\Omega_n$ are not great spheres if\/ $\X^{n-1}=\bS^{n-1}$.
\item There is a hyperplane $H\subset\X^{n-1}$ such that, for each of the cross-polytopes\/ $P_{(1)},\ldots,P_{(n)},$ one of the following three conditions is fulfilled:
\begin{enumerate}
\item  $P_{(k)}$ is circumscribed about an equidistant hypersurface~$\Omega_k$ with base~$H.$ 
\item  \textnormal{(}Only in the case $\X^{n-1}=\Lambda^{n-1}.$\textnormal{)} The hyperplanes of all facets of~$P_{(k)}$ are parallel to~$H.$ 
\item  The hyperplanes of all facets of~$P_{(k)}$ intersect $H$ by the same angle~$\alpha_k\in(0,\pi/2)$.
\end{enumerate}
\end{enumerate}
Besides, for each~$k$, the decomposition of facets of~$P_{(k)}$ into two classes given by~$\Omega_k$ in cases~\textnormal{(i)} and~\textnormal{(ii-a)}, and by~$H$ in cases~\textnormal{(ii-b)} and~\textnormal{(ii-c),}  is as follows: One of the classes consists of all facets~$\Delta_{I,J}$ such that the set $J\setminus X_k$ has even cardinality, and the other class  consists of all facets~$\Delta_{I,J}$ such that $J\setminus X_k$ has odd cardinality. 
\end{theorem}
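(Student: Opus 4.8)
The plan is to reduce Theorem~\ref{theorem_opisan} to a computation with the explicit coordinates of the flat cross-polytopes~$P_0$ and~$P_{\infty}$, and then to recognize the resulting configuration of facet-hyperplanes as being tangent to a common quadric. First I would write down, from~\eqref{eq_bd}--\eqref{eq_param_E2} with $u=0$ (and separately $u=\infty$), the vertices $\ba_i,\bb_i$ of~$P_0$ inside the hyperplane $\X^{n-1}=\spa(\bn_1,\dots,\bn_n)$; at $u=0$ the vector~$\bm$ drops out and $\bd_i(0)=\sum_j h_{ij}\bc_j$, so~$P_0$ is genuinely flat and all its data are expressed through the matrix~$H$ and the dual bases~$\bc_i,\bn_i$. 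The key observation driving the whole proof is that, by Lemma~\ref{lem_dihedral}, every dihedral angle of~$P_0$ at every $(n-2)$-face equals~$\pi$ or $0$; this is what makes~$P_0$ flat, but it also says nothing about the finer incidence structure, so the tangency statement must come from the special form of~$H$.

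The heart of the argument is to fix~$k$ and analyze the $(n-1)$-dimensional cross-polytope~$P_{(k)}$ obtained by deleting~$\ba_k,\bb_k$. For each facet $\Delta_{I,J}$ of~$P_{(k)}$ (so $I\cup J=[n]\setminus\{k\}$) I would compute the hyperplane in~$\X^{n-1}$ spanned by the corresponding vertices, and then compute its signed distance to a candidate centre point~$\bp$ (in the spherical/Euclidean cases) or its relation to a candidate isotropic vector or base hyperplane (in the hyperbolic case). The claim is that this signed distance depends on $\Delta_{I,J}$ only through the parity of $|J\setminus X_k|$, equivalently that its absolute value is constant — which is exactly the assertion that~$P_{(k)}$ is circumscribed about a sphere/orisphere/equidistant hypersurface~$\Omega_k$, with the two tangency-sign classes being those of the stated parity. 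To get concentricity (case~(i)) or the common base~$H$ (case~(ii)), I would show the candidate centre~$\bp$, resp. the candidate vector~$\bv$ or hyperplane~$H$, is the \emph{same} for all~$k$; this should fall out because the natural candidate is built from the vectors $\bc_1,\dots,\bc_n$ or from $\bm$ (the ``missing'' direction at~$u=0$), independently of~$k$. The dichotomy (i) vs.~(ii) corresponds to whether this common object is a genuine (non-great) sphere/orisphere or degenerates to an equidistant/parallel-hyperplane situation, governed by the sign of an appropriate quantity (essentially whether $\langle\bp,\bp\rangle$ is negative, zero, or positive, matching $\det G>0,=0,<0$ and the curvature sign).

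Concretely, for the tangency computation the dual-basis relations $\langle\bn_i,\bc_j\rangle=\delta_{ij}$ are the crucial tool: the hyperplane through $\{\ba_i:i\in I\}\cup\{\bb_j:j\in J\}$ is cut out by a linear functional that I can write explicitly as a combination of the~$\bn_i$ with coefficients built from the entries $h_{ij}=\dfrac{2\lambda_i(\lambda_i g_{ij}-\lambda_j)}{\lambda_i^2-\lambda_j^2}$, and the ``miracle'' is that the partial-fraction structure of~$h_{ij}$ forces the resulting normal functional to have norm (and value on the candidate centre) independent of how $[n]\setminus\{k\}$ is split into $I$ and~$J$, once one flips by the signs encoding membership in~$X_k$. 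I would organize this as: (a) produce the facet functionals; (b) identify~$\Omega_k$ and verify tangency of each facet hyperplane (the sign/parity bookkeeping gives the $\CF_+/\CF_-$ decomposition); (c) prove all~$\Omega_k$ share a centre / base; (d) unwind which curvature case puts us in alternative (i) versus (ii-a)/(ii-b)/(ii-c). The analogous computation at $u=\infty$ handles~$P_{\infty}$ (there the roles of~$\bm$ and the~$\bn_i$ are swapped in a precise sense, since $\dfrac{2\lambda_i^2u^2}{\lambda_i^2u^2+1}\to 2$ and $\dfrac{2\lambda_i u}{\lambda_i^2u^2+1}\to 0$), and by the hypotheses the statement is for a single~$P_u$, so only one of the two flat positions needs the detailed treatment while the other follows by the same formulae or by the $u\mapsto 1/(\lambda_i^2 u)$-type symmetry. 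The main obstacle I anticipate is step~(b): verifying the tangency (constant-distance) identity requires pushing through the partial-fraction algebra of the~$h_{ij}$ and the sign pattern of~$X_k$ without error, and keeping the spherical, Euclidean, and Lobachevsky cases uniform — the cleanest route is probably to do the linear algebra once in the ambient space~$\V$ with the bilinear form $\langle\,\cdot\,,\cdot\,\rangle$, treating all three cases simultaneously, and only at the end translate ``constant $\langle\,\cdot\,,\cdot\,\rangle$-distance to~$\bp$'' into the geometric language of spheres, orispheres, or equidistant hypersurfaces according to the sign of the relevant invariant.
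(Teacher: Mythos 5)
Your plan reduces the theorem to a single unproven identity, and that identity is exactly where all the content lies. You never identify the candidate centre $\bp$ (nor the base hyperplane $H$), and you never verify the ``miracle'' that the normalized pairing of $\bp$ with the facet normals is constant up to the sign $(-1)^{|J\setminus X_k|}$; you explicitly defer this to ``pushing through the partial-fraction algebra''. Concretely, the normal $\bnu_{I,J}$ of a facet $\Delta_{I,J}$ of $P_{(k)}$ is determined by the linear system $\langle\bnu_{I,J},\bc_i\rangle=0$ for $i\in I$ and $\langle\bnu_{I,J},\bd_j(0)\rangle=0$ for $j\in J$, whose solution is given by signed minors of submatrices of $H$ of all sizes from $0$ to $n-1$; the required constancy of $|\langle\bp,\bnu_{I,J}\rangle|/|\bnu_{I,J}|$ over all splittings $I\sqcup J=[n]\setminus\{k\}$ is a genuinely nontrivial determinantal statement, and without an explicit $\bp$ there is nothing to check it against. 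As written, the proposal is a strategy, not a proof.

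The paper avoids having to guess $\bp$ by a synthetic argument: for each $(n-3)$-dimensional face $G$ of $P_{(k)}$ it forms the interior or exterior bisecting hyperplane $\CB_{k,G}$ of the dihedral angle of $P_{(k)}$ at $G$ (interior precisely when $l\in X_k$), shows from the degenerate link $L_G(0)$ and Bricard's relations that $\CB_{k,G}=\CB_{l,G}$ with $r(F_1,\CB_G,F_2)=\lambda_{F_2}/\lambda_{F_1}$ (Lemma~\ref{lem_CB}), deduces concurrency of triples of bisectors from the multiplicativity of these ratios via the trigonometric form of Ceva's theorem (Lemma~\ref{lem_3CB}), and hence obtains a single common point $O$ of all the projectivized bisectors (Lemma~\ref{lem_intersect}); the sphere/orisphere/equidistant/constant-angle dichotomy is then read off from the position of $O$ in $\RP^{n-1}$ relative to $\X^{n-1}$ and its absolute, and the parity description of the two facet classes is exactly the interior/exterior choice of bisector. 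This also exposes a conceptual error in your sketch: the choice between alternatives (i) and (ii) is \emph{not} governed by the sign of $\det G$ --- that sign only fixes which space $\X^n$ you are in --- but by where $O$ lands; for instance in $\E^{n-1}$ both a finite centre (case~(i)) and a point at infinity (case~(ii-c)) can occur, and in $\Lambda^{n-1}$ all of (i), (ii-a), (ii-b), (ii-c) are possible. To salvage the computational route you would need, at minimum, an explicit formula for $O$ in terms of $(G,\blambda,\bs)$ and a proof of the determinantal tangency identity; neither is present.
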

}

\begin{remark}
If~$\X^n=\bS^n$, then the possibilities~(i) and~(ii) coincide to each other. Indeed, if the cross-polytope~$P_{(k)}$ is circumscribed about an $(n-2)$-dimensional small sphere~$\Omega_k$, then all facets of~$P_{(k)}$ intersect the great sphere~$H$ concentric to~$\Omega_k$ by the same angle~$\alpha_k\in(0,\pi/2)$, and vice versa.  
\end{remark}

\begin{remark}
The assertion of Theorem~\ref{theorem_opisan} remains literally the same if we replace the flat position~$P_0$ of~$P_u$ with the other flat position~$P_{\infty}$ of it. This is not surprising, since it is clear that these two cases are completely similar to each other. However, it is interesting that in both cases the resulting decompositions of facets of the cross-polytopes~$P_{(k)}$ into two classes are governed by the evenness of the cardinalities of the same sets~$J\setminus X_k$. This can be shown in the following way.  Let $(G,\blambda,\bs)$ be the set of data corresponding to the flexible cross-polytope~$P_u$. Consider the flexible cross-polytope~$\widetilde{P}_{\widetilde{u}}$ corresponding to the set of data $(G,\widetilde{\blambda},\tilde\bs)$, where $\widetilde{\lambda}_i=1/\lambda_i$, $\tilde{s}_i=s_i$, and  $\tilde{s}_i'=-s_i'$ for all~$i$. It can be immediately checked that  $P_{u}=\widetilde{P}_{1/u}$ for all~$u$, in particular, $P_{\infty}=\widetilde{P}_0$. Hence, to reduce the assertion of the theorem for~$P_{\infty}$ to the assertion of the theorem for~$\widetilde{P}_0$, it is enough to show that the sets~$X_k$ for the flexible cross-polytopes~$P_u$ and~$\widetilde{P}_{\widetilde{u}}$ are identical to each other. At first sight, this is not correct, since we have reversed all signs $s_i'$. Nevertheless, the coefficients~$\widetilde{\lambda}_i$ are decrease rather than increase. Therefore, all results described above including Theorem~\ref{theorem_opisan}, will become true for~$\widetilde{P}_{\widetilde{u}}$ only after we have renumbered its vertices in the opposite order or, equivalently, have reversed the order on the set~$[n]$. Now, we need only to notice that, simultaneously reversing the order on the set~$[n]$ and  all signs~$s_i'$, we do not change the sets~$X_k$.
\end{remark}

Let~$G=\Delta_{U,W}$ be an arbitrary  $(n-3)$-dimensional face of~$P_{(k)}$, and let~$l$ be a unique element of the set $[n]\setminus(U\cup W\cup\{k\})$.  Let $F$ and~$F'$ be the two facets of~$P_{(k)}$ that contain the face~$G$. Consider the dihedral angle between the $(n-2)$-dimensional simplices~$F$ and~$F'$ in~$\X^{n-1}$. If $l\in X_k$, then we denote by~$\CB_{k,G}$ the \textit{interior\/} bisecting hyperplane of the dihedral angle between~$F$ and~$F'$, and if $l\notin X_k$, then we denote by~$\CB_{k,G}$ the \textit{exterior\/} bisecting hyperplane of the dihedral angle between~$F$ and~$F'$. 

In the case of the Euclidean space~$\E^{n-1}$, we consider its projectivization~$\RP^{n-1}$, and denote by~$\hCB_{k,G}$ the projectivization of the hyperplane~$\CB_{k,G}$. In the case of the Lobachevsky space~$\Lambda^{n-1}$, we consider its Beltrami--Klein model in which it is identified with a disk in the projective space~$\RP^{n-1}$, and denote by~$\hCB_{k,G}$ the projective hyperplane in~$\RP^{n-1}$ containing the hyperplane $\CB_{k,G}\subset\Lambda^{n-1}$. In the spherical case, we denote by~$\hCB_{k,G}$ the image of the great sphere~$\CB_{k,G}$ under the natural two-sheeted projection  $\bS^{n-1}\to\RP^{n-1}$. 

\begin{lem}\label{lem_intersect}
The projective hyperplanes $\hCB_{k,G}$ corresponding to all pairs~$(k,G)$ such that $G$ is an $(n-3)$-dimensional face of~$P_{(k)}$ intersect exactly in one point.  
\end{lem}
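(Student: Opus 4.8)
The plan is to reduce the concurrence statement to a linear-algebra fact about the vectors~$\bn_i$ and~$\bc_i$ that already govern the construction of~$P_u$. Working in the ambient vector space~$\V$ for~$\X^{n-1}$ (i.e.\ $\R^{n-1}$, $\R^{n}$, or $\R^{n-1,1}$ in the Euclidean, spherical, and Lobachevsky cases), each projective hyperplane~$\hCB_{k,G}$ is the projectivization of a linear hyperplane, so it corresponds to a linear form on~$\V$, well defined up to scalar. The strategy is: (1) identify, for the flat cross-polytope~$P_{(k)}$, the linear form cutting out the hyperplane of each facet~$\Delta_{I,J}$; (2) write down the bisecting-hyperplane form for the face~$G=\Delta_{U,W}$ as an explicit combination of the two facet forms, with the sign dictated by whether~$l\in X_k$; (3) show all these forms, over all admissible pairs~$(k,G)$, share a common null vector~$\bz\in\V$, and that this~$\bz$ is unique up to scalar, which is exactly the statement that the~$\hCB_{k,G}$ meet in exactly one point of~$\RP^{n-1}$.

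For step~(1): in~$P_0$ the vertices are $\ba_i(0)=s_i\bc_i/|\bc_i|$ and $\bb_i(0)=s_i'\bd_i(0)/|\bd_i(0)|$ with $\bd_i(0)=\sum_j h_{ij}\bc_j$ (from~\eqref{eq_bd},~\eqref{eq_param_SL}, and their Euclidean analogues~\eqref{eq_param_E}). The facet~$\Delta_{I,J}$ of~$P_{(k)}$ (with $k\notin I\cup J$, $I\sqcup J=[n]\setminus\{k\}$, $|I|+|J|=n-1$) spans a hyperplane in~$\X^{n-1}$; its defining linear form is determined by vanishing on the $\bc_i$, $i\in I$, and on the $\bd_j(0)=\sum_m h_{jm}\bc_m$, $j\in J$. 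Using the dual basis~$\bn_1,\dots,\bn_n$ of the~$\bc_i$, one checks this form is, up to scalar, $\ell_{I,J}=\sum_{i}\epsilon_i^{I,J}\,\bn_i^{*}$ for an explicit vector of coefficients (supported on~$\{k\}\cup(\text{complements})$ in a controlled way), and crucially the ratios of the coefficients of~$\ell_{I,J}$ and~$\ell_{I',J'}$ for two facets sharing a codimension-$2$ face~$\Delta_{U,W}$ are governed by the single matrix entry $h_{kl}$ (equivalently $g_{kl}$ and $\lambda_k,\lambda_l$). This is the point at which the proportionality of tangents of half-dihedral-angles — and hence the specific sets~$X_k$ — enters: by Lemma~\ref{lem_dihedral} and the computation of $\lambda_F$, the interior versus exterior bisector is selected precisely by the parity condition $l\in X_k$, and the corresponding bisecting form is $\ell_{U\cup\{l\},W}\pm\ell_{U\cup\{k\},W}$ normalized by the (equal, in the flat position) lengths of the two facets' normals.

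For step~(3): I expect the common point to be the projectivization of the vector~$\bz=\sum_{i=1}^n\mu_i\bc_i$ whose coefficients~$\mu_i$ are (up to scalar) forced by the condition that~$\bz$ lie on every facet-hyperplane of every~$P_{(k)}$ — but note that for fixed~$k$, $\bz$ need \emph{not} lie on the facets of~$P_{(k)}$, only on all the bisecting hyperplanes, which is a weaker system. The cleanest route is to exhibit~$\bz$ directly: take the point where, for each~$k$ separately, all hyperplanes~$\CB_{k,G}$ (as~$G$ ranges over the $(n-3)$-faces of~$P_{(k)}$) concur — this interior/exterior-bisector concurrence inside a single cross-polytope is, by Theorem~\ref{theorem_opisan}, the centre of the sphere/orisphere~$\Omega_k$ (case~(i)), or the appropriate point at infinity / pole associated to~$H$ (case~(ii)); then observe that Theorem~\ref{theorem_opisan} asserts these centres~$\Omega_1,\dots,\Omega_n$ are \emph{concentric}, or that the hyperplane~$H$ is common to all~$P_{(k)}$, so the projective point dual to that common centre (resp.\ the projective point~$\hat H$ or pole) is a single point lying on every~$\hCB_{k,G}$. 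Thus existence of the intersection point follows formally from Theorem~\ref{theorem_opisan}. Uniqueness — that the intersection is a single point and not a higher-dimensional projective subspace — is the main obstacle: it requires showing the linear forms~$\ell_{k,G}$ span a hyperplane in~$\V^{*}$, i.e.\ have a one-dimensional common kernel. I would prove this by producing, already among the forms for a single well-chosen~$k$, enough of them to cut the common solution space down to a line: the forms~$\CB_{k,G}$ for~$P_{(k)}$ alone span the space of linear forms vanishing at the centre of~$\Omega_k$ (an $(n-2)$-dimensional space), so their common zero locus is exactly the projective point $[\,\text{centre of }\Omega_k\,]$ — a single point — and adding the forms for other values of~$k$ can only keep it a single point (it stays nonempty by the concentricity in Theorem~\ref{theorem_opisan}, and a nonempty subset of a point is that point). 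Hence the intersection is exactly one point, and since~$n\ge3$ guarantees all the relevant faces and facets exist, the proof is complete.
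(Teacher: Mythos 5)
There is a genuine gap, and it is a circularity. In the paper, Theorem~\ref{theorem_opisan} (the circumscribability of the cross-polytopes~$P_{(k)}$ about concentric spheres/orispheres, etc.) is \emph{deduced from} Lemma~\ref{lem_intersect}: the proof of the theorem begins by taking the common point~$O$ whose existence and uniqueness is exactly the content of the lemma, and then defines~$\Omega_k$ as the sphere centred at~$O$ tangent to the facet hyperplanes. Your step~(3) runs this implication backwards: you invoke Theorem~\ref{theorem_opisan} to produce the centre of~$\Omega_k$ as the concurrence point of the bisectors of~$P_{(k)}$, and then use the asserted concentricity to get a common point for all~$k$. Neither the existence of~$\Omega_k$ nor the concentricity is available at this stage --- a cross-polytope has $2^{n-1}$ facets, far more than a simplex, so the concurrence of all its facet-bisecting hyperplanes is a strong, non-generic condition; it is precisely what must be proved here, and it holds only because of the special structure of the flat position~$P_0$.

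The substantive input that your sketch omits is the quantitative one: Lemma~\ref{lem_CB} shows that the bisecting hyperplanes~$\CB_{k,G}$ and~$\CB_{l,G}$ attached to the same $(n-3)$-face~$G$ by the two different cross-polytopes~$P_{(k)}$ and~$P_{(l)}$ coincide, and computes the cross-ratio-type quantity $r(F_1,\CB_G,F_2)=\lambda_{F_2}/\lambda_{F_1}$. The telescoping product $r(f_1,b_{12},f_2)\,r(f_2,b_{23},f_3)\,r(f_3,b_{31},f_1)=1$ then feeds into the trigonometric form of Ceva's theorem (Lemma~\ref{lem_3CB}) to force the pairwise intersections of bisectors around a codimension-$3$ face to agree; this is what propagates a single concurrence point~$O_F$ (obtained for one facet~$F$ by the elementary observation that $n-1$ projective hyperplanes in~$\RP^{n-1}$ meet, plus a dimension count against the hyperplane of~$F$ to see the intersection is zero-dimensional) to all facets and all~$k$. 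Your steps~(1)--(2) gesture at writing the bisecting forms explicitly in the dual basis, which could in principle substitute for this, but as written they remain at the level of ``one checks'' and never produce the identity that makes the hyperplanes concurrent. Without either the Ceva argument or a completed linear-algebra computation, the proof does not go through.
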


\begin{lem}\label{lem_nondeg}
Any dihedral angle of any of the cross-polytopes~$P_{(k)}$ is equal neither to~$0$ nor to~$\pi$.
\end{lem}

We shall show that Theorem~\ref{theorem_opisan}  follows from Lemmas~\ref{lem_intersect} and~\ref{lem_nondeg}, and then we shall prove these lemmas.

\begin{proof}[Proof of Theorem~\ref{theorem_opisan}]
Let $O$ be the intersection points of the projective hyperplanes~$\hCB_{k,G}$, which exists and is unique by Lemma~\ref{lem_intersect}. Consider the cases:

1. Suppose that~$O$ lies in~$\X^{n-1}=\E^{n-1}$ or~$\Lambda^{n-1}$ or corresponds to a pair of antipodal points of~$\X^{n-1}=\bS^{n-1}$. In the latter case, we denote by~$O$ one of the two antipodal points of~$\bS^{n-1}$ projecting to~$O\in\RP^{n-1}$. Then, for each pair~$(k,G)$, the bisecting hyperplane~$\CB_{k,G}$ of the dihedral angle between the two  $(n-2)$-dimensional faces~$F$ and~$F'$ of~$P_{(k)}$ containing~$G$ passes through~$O$. Hence, each cross-polytope~$P_{(k)}$ is circumscribed about a sphere~$\Omega_k$ with centre~$O$.

2. Suppose that $\X^{n-1}=\Lambda^{n-1}$ and $O$ is a point on the absolute. Similarly, we obtain that each cross-polytope~$P_{(k)}$ is circumscribed about an orisphere~$\Omega_k$ with centre~$O$.

3. Suppose that $\X^{n-1}=\E^{n-1}$ and $O$ is a point at infinity. Consider an arbitrary hyperplane $H\subset\E^{n-1}$ orthogonal to lines passing through~$O$. Then, for each pair~$(k,G)$, the bisecting hyperplane~$\CB_{k,G}$ of the dihedral angle between the facets~$F$ and~$F'$ of~$P_{(k)}$ containing~$G$ is  perpendicular to~$H$.  Therefore all facets of each cross-polytope~$P_{(k)}$ form the same angle with~$H$.

4. Suppose that $\X^{n-1}=\Lambda^{n-1}$ and $O\in\RP^{n-1}$ is a point outside the absolute.  We denote by~$H\subset\Lambda^{n-1}$ the hyperplane that is the polar of~$O$ with respect to the absolute. Then, for each pair~$(k,G)$, the bisecting hyperplane~$\CB_{k,G}$ between the facets~$F$ and~$F'$ of~$P_{(k)}$ containing~$G$ intersects the hyperplane~$H$, and is perpendicular to it. Hence either the hyperplanes of both facets~$F$ and~$F'$ are divergent with~$H$ and are on the same distance from it or the hyperplanes of both facets~$F$ and~$F'$ are parallel to~$H$ or the hyperplanes of both facets~$F$ and~$F'$ intersect~$H$ by the same angle. Hence, for each of the cross-polytopes~$P_{(k)}$, one of the assertions~(ii-a)--(ii-c) in Theorem~\ref{theorem_opisan} holds.

It follows from Lemma~\ref{lem_nondeg} that, in case~1, none of the spheres~$\Omega_k$ is a great sphere in~$\bS^{n-1}$, and in the cases~$3$ and~$4$, the facets of~$P_{(k)}$ are neither contained in~$H$ nor perpendicular to~$H$.

In each of the cases considered, the facets~$F$ and~$F'$ of~$P_{(k)}$ belong to the same class in the decomposition corresponding either to the hypersurface~$\Omega_k$ or to the hyperplane~$H$ if and only if $\CB_{k,G}$ is the interior bisecting hyperplane of the dihedral angle between~$F$ and~$F'$, i.\,e., if and only if $l\in X_k$, where   $l$ is a unique element of the set $[n]\setminus(U\cup W\cup\{k\})$,  $G=\Delta_{U,W}$. This easily implies that one of the two classes consists of all facets~$\Delta_{I,J}$ of~$P_{(k)}$ with  even cardinality $|J\setminus X_k|$, and the other class consists of all facets~$\Delta_{I,J}$ of~$P_{(k)}$ with odd cardinality $|J\setminus X_k|$. 
\end{proof}

Let $F$ and~$F'$ be two  $(n-2)$-dimensional simplices in~$\X^{n-1}$ with a common $(n-3)$-dimensional face~$G$, and let $H\subset\X^{n-1}$ be an arbitrary hyperplane passing through~$G$. Choose a co-orientation  of~$G$ in~$\X^{n-1}$, that is, the direction of the positive circuit around it, and choose one of the two half-planes $H_+\subset H$ bounded by the plane of~$G$. Define the oriented angle $\angle(F,H_+)\in\R/(2\pi\Z)$ to be the angle of the rotation of~$F$ in the positive direction around~$G$ to the half-plane~$H_+$, and put $\angle(H_+,F_i)=-\angle(F,H_+)$,   and similarly for~$F'$. Then the ratio
$$
r(F,H,F')=\frac{\sin\angle(F,H_+)}{\sin\angle(H_+,F')}
$$
is determined solely by the simplices~$F$ and~$F'$ and the hyperplane~$H$, and is independent of the choice of the co-orientation of~$G$ and of the choice of the half-plane~$H_+$. Besides, 
$$r(F,H,F')=r(F',H,F)^{-1}.$$

Now, we take for~$G$ an arbitrary  $(n-3)$-dimensional face~$\Delta_{U,W}$ of the cross-polytope~$P_0$. Let $\{k,l\}=[n]\setminus(U\cup W)$. Then~$G$ is a face of the cross-polytopes~$P_{(k)}$ and~$P_{(l)}$. Let $F_1$ and~$F_1'$ be the two facets of~$P_{(k)}$ containing~$G$, and let $F_2$ and~$F_2'$ be the two facets of~$P_{(l)}$ containing~$G$. (Obviously, $F_1$ and $F_1'$ are exactly the faces~$\Delta_{U\cup\{l\},W}$ and~$\Delta_{U,W\cup\{l\}}$, but we do not want to specify which of them is~$F_1$, and which of them is~$F_1'$ to avoid considering several cases in the sequel; similarly for~$F_2$ and~$F_2'$.)

\begin{lem}\label{lem_CB}
The hyperplanes~$\CB_{k,G}$ and~$\CB_{l,G}$ coincide to each other, and
\begin{equation*}
r(F_1,\CB_G,F_2)=\frac{\lambda_{F_2}}{\lambda_{F_1}}\,,
\end{equation*}
where $\CB_G=\CB_{k,G}=\CB_{l,G}$.
\end{lem}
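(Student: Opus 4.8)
The plan is to work in the link of the face $G=\Delta_{U,W}$ inside the flat cross-polytope $P_0$, which reduces everything to a two-dimensional picture. Since $G$ is $(n-3)$-dimensional and is a face of both $P_{(k)}$ and $P_{(l)}$, the four simplices $F_1,F_1'$ (the two facets of $P_{(k)}$ through $G$) and $F_2,F_2'$ (the two facets of $P_{(l)}$ through $G$) all contain $G$, and their links in $P_0$ are the four rays emanating from the point representing $G$ in the two-dimensional sphere $\bS^1_{\bx}$ of directions normal to $G$. In fact these four rays, together with the full facet $\Delta_{U,W\cup\{k,l\}}$ of $P_0$ that is the image of $G$ in $\bS^{n-1}$... more precisely, the directions in $\bS^1_{\bx}$ span the planar configuration given by the link $L(G,P_0)$, which is itself a flexible spherical quadrangle of the simplest type with vertices corresponding to $F_1=\Delta_{U\cup\{l\},W}$, $F_1'=\Delta_{U,W\cup\{l\}}$, $F_2=\Delta_{U\cup\{k\},W}$, $F_2'=\Delta_{U,W\cup\{k\}}$ in cyclic order. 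So I would first set up these four directions as unit vectors (the interior normals $\bn_{F_1},\bn_{F_1'},\bn_{F_2},\bn_{F_2'}$ to $G$ inside the respective facets) in the Euclidean plane $T_{\bx}\bS^1_{\bx}\cong\R^2$, with appropriate angles between consecutive ones.

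Next I would identify $\CB_{k,G}$ and $\CB_{l,G}$ in this planar model. The bisecting hyperplane $\CB_{k,G}$ of the dihedral angle between $F_1$ and $F_1'$ corresponds, in $\bS^1_{\bx}$, to the direction bisecting $\bn_{F_1}$ and $\bn_{F_1'}$; it is the interior bisector when $l\in X_k$ and the exterior bisector when $l\notin X_k$. Similarly $\CB_{l,G}$ corresponds to the bisector (interior if $k\in X_l$, exterior otherwise) of $\bn_{F_2}$ and $\bn_{F_2'}$. The key point is to show these two bisecting directions coincide. This is precisely where Lemma~\ref{lem_dihedral} enters: the dihedral angles of $P_0$ at the $(n-2)$-faces $F_1,F_1',F_2,F_2'$, which are the angles of the spherical quadrangle $L(G,P_0)$ at its vertices, all equal $\pi$ (since $\psi_F(0)=2\arctan(0)$ or $2\arctan(0)+\pi$, and one checks via formula~\eqref{eq_dihedral} which case occurs). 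Alternatively, and more robustly, I would use the evolution: for general $u$ the link $L(G,P_u)$ is a flexible spherical quadrangle, and by the Bricard trichotomy invoked in the proof of Lemma~\ref{lem_dihedral}, opposite sides are equal or sum to $\pi$; this forces the two diagonals' perpendicular bisectors (equivalently, the bisectors of the angle-pairs at the two pairs of opposite vertices) to pass through a common point — the centre of the circumscribed or inscribed circle of the quadrangle. Translating the relevant bisector back to $\X^{n-1}$ shows $\CB_{k,G}=\CB_{l,G}=:\CB_G$; the choice of interior vs.\ exterior bisector, dictated by whether $l\in X_k$ (resp.\ $k\in X_l$), is exactly what makes the two choices produce the same hyperplane, and one verifies by the definition of $X_k$ that $l\in X_k \iff k\notin X_l$ is consistent with this.

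Finally, for the ratio formula, I would compute $r(F_1,\CB_G,F_2)=\sin\angle(F_1,\CB_G^+)/\sin\angle(\CB_G^+,F_2)$ directly in the planar model of $\bS^1_{\bx}$. With $\CB_G$ the (appropriate) bisector of $\{F_1,F_1'\}$, the angle $\angle(F_1,\CB_G)$ is half the dihedral angle $\psi_{F_1}(u)$ (up to the $\pi$-shift governed by signs), i.e.\ it is $\pm\varphi_{k}(u)/2$ or $\pi/2\mp\varphi_k(u)/2$; and recalling $\varphi_k(u)=2\arctan(\lambda_k u)$ together with $\lambda_{F_1}=(-1)^{|W\cap X_k|}s_k\lambda_k$ from the definition preceding Lemma~\ref{lem_CB}, we get $\tan\angle(F_1,\CB_G)=\lambda_{F_1}u$ (or its relation via the $\pi$-shift). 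An identical computation at the vertex $F_2$ gives $\tan\angle(\CB_G,F_2)=\lambda_{F_2}u$. Since both angles share the same bisector $\CB_G$ and both are small (so sines and tangents agree to leading order — but in fact I want the exact identity, which follows because the two angles are supplementary-type related across the quadrangle), the ratio of sines reduces, after the Bricard side-length relations are used to pin down the exact geometry of the quadrangle, to $\lambda_{F_2}/\lambda_{F_1}$. I expect the main obstacle to be the bookkeeping of signs and of interior-versus-exterior bisectors across the four cases of the Bricard trichotomy: one must check in each case that the same hyperplane $\CB_G$ results and that the oriented angles line up so the ratio comes out as $\lambda_{F_2}/\lambda_{F_1}$ rather than its reciprocal or negative. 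Once the planar picture is fixed, the computation itself is a short trigonometric identity.
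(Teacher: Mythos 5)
Your overall strategy -- pass to the link $L(G,P_u)$, invoke Bricard's trichotomy for the resulting flexible spherical quadrangle, and read off both the coincidence of bisectors and the ratio $r$ from the flat position $L_G(0)$ in the great circle $\bS^1_{\bx}$ -- is exactly the paper's strategy. But the quantitative core of your argument rests on a misidentification. You write that $\angle(F_1,\CB_G)$ ``is half the dihedral angle $\psi_{F_1}(u)$'' and conclude $\tan\angle(F_1,\CB_G)=\lambda_{F_1}u$. This cannot be right: $\psi_{F_1}(u)$ is the dihedral angle of the $n$-dimensional polyhedron $P_u$ at the $(n-2)$-face $F_1$, i.e.\ the angle of the quadrangle $L_G(u)$ \emph{at its vertex} $A$, and it degenerates to $0$ or $\pi$ at $u=0$; whereas $\angle(F_1,\CB_G)$ is an angle of rotation around $G$ inside the fixed hyperplane $\X^{n-1}$, i.e.\ an \emph{arc length} on $\bS^1_{\bx}$ between the vertex $A$ and the point where $\CB_G$ meets the circle. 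The latter is a constant determined by the side lengths $\alpha=AB$, $\beta=AD$ of the link quadrangle (it equals $\frac{\alpha\pm\beta}{2}$ up to sign and a possible shift by $\pi/2$), not a function of $u$. The actual bridge between $\lambda_{F_2}/\lambda_{F_1}$ and the flat geometry is Bricard's identity $\tan\frac{\psi_A}{2}\tan\frac{\psi_B}{2}=\cos\frac{\alpha-\beta}{2}\big/\cos\frac{\alpha+\beta}{2}$ (or the sine analogue): its left side is the constant $\pm\lambda_{F_1}^{\pm1}\lambda_{F_2}^{\mp1}$ by Lemma~\ref{lem_dihedral}, which pins $\lambda_{F_2}/\lambda_{F_1}$ to one of two expressions in $\alpha,\beta$, and the remaining two-fold ambiguity is resolved by $|\lambda_{F_1}|<|\lambda_{F_2}|$ (from $k<l$) together with the elementary comparison of $\bigl|\cos\frac{\alpha+\beta}{2}/\cos\frac{\alpha-\beta}{2}\bigr|<1<\bigl|\sin\frac{\alpha+\beta}{2}/\sin\frac{\alpha-\beta}{2}\bigr|$. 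You flag the sign bookkeeping as ``the main obstacle'' but never supply this mechanism, and without it the ratio cannot be forced to come out as $\lambda_{F_2}/\lambda_{F_1}$ rather than its reciprocal or negative.

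Two smaller but real errors: the cyclic order of the link quadrangle is not $F_1,F_1',F_2,F_2'$ -- the two facets of $P_{(k)}$ through $G$, namely $\Delta_{U\cup\{l\},W}$ and $\Delta_{U,W\cup\{l\}}$, sit at \emph{opposite} vertices $A$ and $C$ (no facet of $P_u$ contains both $\ba_l$ and $\bb_l$), so $\CB_{k,G}$ is read off from the arc midpoints of a \emph{diagonal} pair, not an adjacent pair; and the dihedral angles of $P_0$ at $F_1,F_1',F_2,F_2'$ are not all $\pi$ -- by formula~\eqref{eq_dihedral} each is $0$ or $\pi$ according to the sign $s_js_j'$, and this dichotomy is precisely what distinguishes the configurations (interior versus exterior bisector, and the several flat shapes of $L_G(0)$) that must be checked case by case.
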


\begin{proof}
Suppose that $k<l$. The link $L_G(u)=L(G,P_u)$ is the flexible spherical quadrangle with the vertices $A$, $B$, $C$, and~$D$ cut by the tangents cones to the faces~$F_1$, $F_2$, $F_1'$, and~$F_2'$ respectively in the sphere~$\bS^2_{\bx}$ for an interior point~$\bx$ of~$G$. By Lemma~\ref{lem_dihedral}, the tangents of the halves of the oriented dihedral angles~$\psi_A(u)=\psi_{F_1}(u)$ and~$\psi_B(u)=\psi_{F_2}(u)$ are either directly or inversely proportional to each other. Hence the flexible quadrangle~$L_G(u)$ is of one of the three types shown in Fig.~\ref{fig_3polyg}. Denote by~$\alpha$ and~$\beta$ the length of the sides~$AB$ and~$AD$ respectively. Then the lengths of the sides~$CD$ and~$BC$ are equal to~$\alpha$ and~$\beta$  respectively for the quadrangles in Fig.~\ref{fig_3polyg}(a),\,(b), and are equal to $\pi-\alpha$ and~$\pi-\beta$  respectively for the quadrangle in Fig.~\ref{fig_3polyg}(c). Besides,~$\alpha\ne\beta$ and $\alpha+\beta\ne\pi$. Indeed, it is easy to check that, if one of these two equalities held true, then one of the  angles of~$L_G(u)$, i.\,e., one of the dihedral angles of~$P_u$ would be either identically~$0$ or identically~$\pi$ during the flexion, which is impossible, since $\lambda_k\ne 0$ and~$\lambda_l\ne 0$.
Bricard~\cite[\S II]{Bri97}  showed that, for each of the quadrangles in Fig.~\ref{fig_3polyg}(a),\,(b), we have one of the  two equalities
\begin{align}
\tan\frac{\psi_A(x)}{2}\,\tan\frac{\psi_B(x)}{2}&=
\frac{\cos\frac{\alpha-\beta}{2}}{\cos\frac{\alpha+\beta}{2}}\ ,\label{eq_tg_prod1}&
\tan\frac{\psi_A(x)}{2}\,\tan\frac{\psi_B(x)}{2}&=
\frac{\sin\frac{\beta-\alpha}{2}}{\sin\frac{\alpha+\beta}{2}}\ ,
\end{align}
and for a quadrangle in Fig.~\ref{fig_3polyg}(c) we have one of the  two equalities
\begin{align*}
\frac{\tan\frac{\psi_A(x)}{2}}{\tan\frac{\psi_B(x)}{2}}&=
\frac{\sin\frac{\alpha-\beta}{2}}{\sin\frac{\alpha+\beta}{2}}\ ,&
\frac{\tan\frac{\psi_A(x)}{2}}{\tan\frac{\psi_B(x)}{2}}&=
-\frac{\cos\frac{\alpha-\beta}{2}}{\cos\frac{\alpha+\beta}{2}}\ .
\end{align*}

Consider the flat position $L_G(0)$ of~$L_G(u)$ lying in the great circle  $\bS^1_{\bx}\subset\bS^2_{\bx}$ cut by the tangent space~$T_{\bx}\X^{n-1}$. We shall study in detail the case of the quadrangle~$L_G(u)$ shown in Fig.~\ref{fig_3polyg}(a), the two other cases are completely similar. In the case in Fig.~\ref{fig_3polyg}(a), we have $s_ks_k'=-s_ls_l'$. Consider two subcases:

1. Suppose that $s_ks_k'=1$ and $s_ls_l'=-1$. Then $k\in X_l$ and~$l\in X_k$. Hence, $\CB_{k,G}$ and $\CB_{l,G}$ are interior bisecting hyperplanes of the dihedral angles between~$F_1$ and~$F_1'$ and between~$F_2$ and~$F_2'$ respectively. Therefore, the tangent space $T_{\bx}\CB_{k,G}$ intersects the circle~$\bS^1_{\bx}$ in the midpoints of the two arcs with endpoitns~$A$ and~$C$,  and the tangent space $T_{\bx}\CB_{l,G}$ intersects~$\bS^1_{\bx}$ in the midpoints of the two arcs with endpoitns~$A$ and~$C$. The flat quadrangle $L_{G}(0)$ in the circle~$\bS^1_{\bx}$ has the form shown in Fig.~\ref{fig_4circ}(a) if  $\alpha>\beta$, and the form shown in Fig.~\ref{fig_4circ}(b) if $\alpha<\beta$. In both cases the midpoints~$E$ and~$E'$ of the arcs with endpoints~$A$ and~$C$ coincide with the midpoints of the arcs with endpoints~$B$ and~$D$, see Fig.~\ref{fig_4circ}(a),\,(b). Hence, $\CB_{k,G}=\CB_{l,G}$. The ration $r(F_1,\CB_G,F_2)$ is equal to the ration of the sines of the oriented lengths of the arcs~$AE$ and~$EB$. Therefore,
$$
r(F_1,\CB_G,F_2)=\frac{\sin\frac{\alpha+\beta}{2}}{\sin\frac{\alpha-\beta}{2}}\ .
$$
Since $s_ks_k'=1$ and $s_ls_l'=-1$, we have $\tan\frac{\psi_A(x)}{2}=\lambda_{F_1}x$ and $\tan\frac{\psi_B(x)}{2}=-\lambda^{-1}_{F_2}x^{-1}$. Hence it follows from~\eqref{eq_tg_prod1} that the ration $\frac{\lambda_{F_2}}{\lambda_{F_1}}$ is equal either to $-\frac{\cos\frac{\alpha+\beta}{2}}{\cos\frac{\alpha-\beta}{2}}$ or to $\frac{\sin\frac{\alpha+\beta}{2}}{\sin\frac{\alpha-\beta}{2}}\,$. Since $\alpha,\beta\in(0,\pi)$, we easily obtain that 
$$
\left|
\frac{\cos\frac{\alpha+\beta}{2}}{\cos\frac{\alpha-\beta}{2}}
\right|<1,\qquad
\left|
\frac{\sin\frac{\alpha+\beta}{2}}{\sin\frac{\alpha-\beta}{2}}
\right|>1.
$$
But $|\lambda_{F_1}|<|\lambda_{F_2}|$, since $k<l$. Thus,
$$
\frac{\lambda_{F_2}}{\lambda_{F_1}}=\frac{\sin\frac{\alpha+\beta}{2}}{\sin\frac{\alpha-\beta}{2}}=r(F_1,\CB_G,F_2).
$$
\begin{figure}
\begin{center}
\includegraphics[scale=.5]{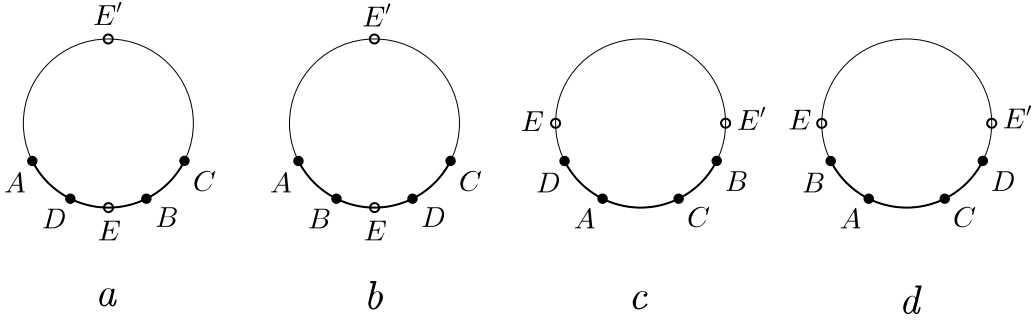}
\end{center}
\caption{The flat position of the link of~$G$\,: (a) $s_ks_k'=1$, $\alpha>\beta$, (b)~$s_ks_k'=1$, $\alpha<\beta$, (c) $s_ks_k'=-1$, $\alpha>\beta$, (d) $s_ks_k'=-1$, $\alpha<\beta$.} \label{fig_4circ}
\end{figure}

2. Suppose that $s_ks_k'=-1$ and $s_ls_l'=1$. Then $k\notin X_l$ and~$l\notin X_k$. Hence, $\CB_{k,G}$ and $\CB_{l,G}$ are the exterior bisecting hyperplanes of the dihedral angles between~$F_1$ and~$F_1'$ and between~$F_2$ and~$F_2'$ respectively. Therefore, the tangent space $T_{\bx}\CB_{k,G}$ intersects~$\bS^1_{\bx}$ in the endpoints of the diameter parallel to the chord~$AC$, and the tangent space $T_{\bx}\CB_{l,G}$ intersects~$\bS^1_{\bx}$ in the endpoints of the diameter parallel to the chord~$BD$. The flat quadrangle $L_{G}(0)$  has the form shown in Fig.~\ref{fig_4circ}(c) if $\alpha>\beta$, and has the form shown in Fig.~\ref{fig_4circ}(d) if $\alpha<\beta$. In both cases, the chords $AC$ and~$BD$ are parallel to each other, hence, the same diameter $EE'$ is parallel to both of them. Therefore, $\CB_{k,G}=\CB_{l,G}$ and
$$
r(F_1,\CB_G,F_2)=-\frac{\sin\frac{\pi+\alpha-\beta}{2}}{\sin\frac{\pi-\alpha-\beta}{2}}=-\frac
{\cos\frac{\alpha-\beta}{2}}{\cos\frac{\alpha+\beta}{2}}=\frac{\lambda_{F_2}}{\lambda_{F_1}}\ .
$$
The latter equality follows from the formulae $\tan\frac{\psi_A(x)}{2}=-\lambda_{F_1}^{-1}x^{-1}$ and $\tan\frac{\psi_B(x)}{2}=\lambda_{F_2}x$, formulae~\eqref{eq_tg_prod1}, and the inequality $|\lambda_{F_1}|<|\lambda_{F_2}|$.
\end{proof}

\begin{proof}[Proof of Lemma~\ref{lem_nondeg}]
As it was mentioned in the proof of the previous lemma, the link $L_G(u)$ of every $(n-3)$-dimensional face~$G$ of~$P_u$ has the form of one of the quadrangles shown in Fig.~\ref{fig_3polyg}, and besides, the lengths~$\alpha$ and~$\beta$ of its sides~$AB$ and~$AD$ are not equal to each other, and their sum is not equal to~$\pi$. Hence, in the flat position~$L_G(0)$ of this quadrangle any pair of its vertices neither coincide to each other nor are antipodal to each other. Therefore, the dihedral angles of the cross-polytopes~$P_{(k)}$ and~$P_{(l)}$ at~$G$ are neither zero nor straight. 
\end{proof}

\begin{lem}\label{lem_3CB}
Let $F_1$, $F_2$, and~$F_3$ be three pairwise distinct $(n-2)$-dimensional faces of the cross-polytope~$P_0$ that are contained in the same $(n-1)$-dimensional face of~$P_0$. Then
\begin{equation}\label{eq_CB}
\hCB_{F_1\cap F_2}\cap \hCB_{F_2\cap F_3}=
\hCB_{F_2\cap F_3}\cap \hCB_{F_3\cap F_1}=
\hCB_{F_3\cap F_1}\cap \hCB_{F_1\cap F_2}.
\end{equation}
\end{lem}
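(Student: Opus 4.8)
The plan is to reduce Lemma~\ref{lem_3CB} to Lemma~\ref{lem_CB}, which already gives us control over pairwise intersections of the hyperplanes~$\hCB_G$. Let the common $(n-1)$-dimensional face containing $F_1$, $F_2$, $F_3$ be $\Delta_{I,J}$ where $|I|+|J|=n$, and write $[n]\setminus(I\cup J)=\{k\}$; thus each $F_m$ is a facet of $P_{(k)}$ (after discarding the vertices $\ba_k$, $\bb_k$). Since $F_1,F_2,F_3$ are three distinct facets of the simplex $\Delta_{I,J}$, they pairwise meet in $(n-3)$-dimensional faces $G_{12}=F_1\cap F_2$, $G_{23}=F_2\cap F_3$, $G_{31}=F_3\cap F_1$, and all three of these faces are faces of $P_{(k)}$. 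By Lemma~\ref{lem_CB} applied to $G_{12}$ with the pair of indices $\{k,l_{12}\}=[n]\setminus(\text{index sets of }G_{12})$, the hyperplane $\CB_{k,G_{12}}$ is the (interior or exterior, according to whether $l_{12}\in X_k$) bisector of the dihedral angle between $F_1$ and $F_2$; similarly for $G_{23}$ and $G_{31}$.

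**Key steps.** First I would observe that, by the decomposition statement at the end of Theorem~\ref{theorem_opisan} (or directly from Lemma~\ref{lem_CB} and the definition of $\CB_{k,G}$), for each pair $F_a,F_b$ the hyperplane $\hCB_{F_a\cap F_b}$ is characterized projectively as the locus of points $x\in\RP^{n-1}$ for which the signed "projective distances" to the hyperplanes spanned by $F_a$ and by $F_b$ agree up to the sign dictated by membership in $X_k$. Concretely, if $\ell_a, \ell_b$ are linear forms cutting out the hyperplanes of $F_a, F_b$, normalized so that the common face has $\ell_a=\ell_b=0$ on it, then $\hCB_{F_a\cap F_b}=\{\ell_a = \varepsilon_{ab}\,\mu_{ab}\,\ell_b\}$ for suitable constants, where $\varepsilon_{ab}=\pm1$ encodes interior vs.\ exterior. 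The second step is to extract, from Lemma~\ref{lem_CB}, that the multiplicative factor relating $\ell_a$ and $\ell_b$ along $\hCB_{F_a\cap F_b}$ is governed by the ratio $\lambda_{F_b}/\lambda_{F_a}$: precisely, $r(F_a,\CB_{F_a\cap F_b},F_b)=\lambda_{F_b}/\lambda_{F_a}$. Hence on $\hCB_{F_1\cap F_2}$ the forms satisfy $\ell_1/\ell_2 = \lambda_{F_1}/\lambda_{F_2}$ (up to the appropriate sign), on $\hCB_{F_2\cap F_3}$ we have $\ell_2/\ell_3=\lambda_{F_2}/\lambda_{F_3}$, and on $\hCB_{F_3\cap F_1}$ we have $\ell_3/\ell_1=\lambda_{F_3}/\lambda_{F_1}$. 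Multiplying these three relations gives the cocycle identity $1=1$, which is exactly what makes the three pairwise intersections coincide: a point lying on any two of the three hyperplanes automatically satisfies the defining equation of the third, because the product of the three ratios is forced to be $1$.

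**The main obstacle.** The delicate point is the bookkeeping of signs — whether each $\CB$ is the interior or exterior bisector, i.e.\ whether $\varepsilon_{ab}=+1$ or $-1$ — and checking that the product $\varepsilon_{12}\varepsilon_{23}\varepsilon_{31}$ comes out $+1$ so that the three relations genuinely compose to the identity rather than to $-1$. This is where I would invoke the structure of the sets $X_k$: the signs $\varepsilon_{ab}$ are determined by whether the "missing index" $l_{ab}$ of $G_{ab}$ lies in $X_k$, and since $F_1, F_2, F_3$ are three facets of a single simplex, the three missing indices $l_{12}, l_{23}, l_{31}$ are exactly the three "extra" indices complementary to each pair, and a short combinatorial check (essentially: each of the three indices is counted in exactly the right parity of the $X_k$-membership conditions) shows the signs multiply to $+1$. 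I would phrase this cleanly by passing to the reformulation in terms of $\lambda_{F_m}$ and the parity $|J_m\setminus X_k|$ of each facet $F_m=\Delta_{I_m,J_m}$, so that all signs are absorbed into the well-defined quantities $\lambda_{F_m}$ from Section~\ref{section_dihedral}; then $r(F_a,\CB_{F_a\cap F_b},F_b)=\lambda_{F_b}/\lambda_{F_a}$ holds with \emph{consistent} signs by Lemma~\ref{lem_CB}, and the telescoping product is manifestly $1$. The rest is the elementary projective-geometry fact that three hyperplanes whose defining linear relations compose to the identity on the common codimension-$2$ subspace directions have a common intersection equal to all three pairwise intersections; this I would state and verify in one or two lines of linear algebra in a chart of $\RP^{n-1}$.
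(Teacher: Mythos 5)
Your proposal is correct and follows essentially the same route as the paper: reduce everything to Lemma~\ref{lem_CB}, observe that the cyclic product $r(F_1,\CB_{G_{12}},F_2)\,r(F_2,\CB_{G_{23}},F_3)\,r(F_3,\CB_{G_{31}},F_1)=\frac{\lambda_{F_2}}{\lambda_{F_1}}\cdot\frac{\lambda_{F_3}}{\lambda_{F_2}}\cdot\frac{\lambda_{F_1}}{\lambda_{F_3}}=1$, and convert this into concurrency of the three bisecting hyperplanes by a Ceva-type argument, with the sign bookkeeping already absorbed into the signed statement of Lemma~\ref{lem_CB} exactly as you suspect. The only real difference is in the last step: the paper localizes at the face $Q=F_1\cap F_2\cap F_3$ and invokes the trigonometric form of spherical Ceva on the link sphere $\bS^2_{\bx}$ (with a separate planar Ceva argument when $n=3$), whereas you carry out the equivalent pencil-of-hyperplanes computation directly in $\RP^{n-1}$.
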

\begin{proof}
We put $G_{ij}=F_i\cap F_j$ and $Q=F_1\cap F_2\cap F_3$. Then $\dim G_{ij}=n-3$ and $\dim Q=n-4$. Suppose that $n\ge 4$. Let $\bx$ be a point in the relative interior of~$Q$. In the space $T_{\bx}\X^{n-1}$ we consider the sphere~$\bS^2_{\bx}$ consisting of all unit vectors orthogonal to~$Q$. The tangent cones to the faces~$F_1$, $F_2$, and~$F_3$ intersect the sphere~$\bS^2_{\bx}$ by arcs of great circles, which we denote by~$f_1$, $f_2$, and~$f_3$ respectively, and the tangent spaces to the hyperplanes $\CB_{G_{12}}$, $\CB_{G_{23}}$, and~$\CB_{G_{31}}$ intersect~$\bS^2_{\bx}$ by great circles, which we denote by~$b_{12}$, $b_{23}$, and~$b_{31}$ respectively. Each great circle~$b_{ij}$ passes through the common endpoint of the arcs~$f_i$ and~$f_j$. By Lemma~\ref{lem_CB}, we have
$$
r(f_i,b_{ij},f_j)=r(F_i,\CB_{G_{ij}},F_j)=\frac{\lambda_{F_j}}{\lambda_{F_i}}.
$$
Hence,
$$
r(f_1,b_{12},f_2)\,r(f_2,b_{23},f_3)\,r(f_3,b_{31},f_1)=1.
$$
By spherical Ceva's theorem in the trigonometric form, the great circles~$b_{12}$, $b_{23}$, and~$b_{31}$ intersect in a pair of antipodal points of~$\bS^2_{\bx}$. Besides, since every of the ratios $r(f_i,b_{ij},f_j)$ is neither zero nor infinity, no pair of the great circles~$b_{12}$, $b_{23}$, and~$b_{31}$ coincide to each other. This immediately implies~\eqref{eq_CB}. If $n=3$, then equality~\eqref{eq_CB} follows in the same way from the trigonometric form of Ceva's theorem in~$\X^2$ applied to the lines $\CB_{G_{12}}$, $\CB_{G_{23}}$, and~$\CB_{G_{31}}$.
\end{proof}

\begin{proof}[Proof of Lemma~\ref{lem_intersect}]
Let $F$ be an $(n-2)$-dimensional face of the cross-polytope~$P_0$. We denote by~$O_F$ the intersection of the $n-1$ projective hyperplanes $\hCB_G\subset\RP^{n-1}$, where $G$ runs over all $(n-3)$-dimensional faces of~$F$. Then~$O_F$ is non-empty.  Let us show that~$O_F$ is a point. If this were not correct, then the intersection of the projective plane~$O_F$ with the projective hyperplane of the face~$F$ would also be non-empty. This is impossible, since the intersection of the projective hyperplane of the face~$F$ with~$\hCB_G$ is exactly the projective plane of the face~$G$, and the intersection of the projective planes of all $(n-3)$-dimensional faces $G\subset F$ is empty. (No hyperplane~$\hCB_G$ can coincide with the hyperplane of~$F$, since the coefficient~$\lambda_F$ is neither zero nor infinity.) To prove the lemma, it remains to show that all points~$O_F$ coincide. To show this, it is sufficient to show that $O_{F_1}=O_{F_2}$ whenever $F_1$ and~$F_2$ are two   $(n-2)$-dimensional faces of~$P_0$ that are contained in an $(n-1)$-dimensional face~$\Delta$ of~$P_0$. Put $G_0=F_1\cap F_2$. For each $(n-2)$-dimensional face~$F\subset \Delta$ such that $F\ne F_1,F_2$, Lemma~\ref{lem_3CB} implies that  $\hCB_{F_1\cap F}\cap\hCB_{G_0}=\hCB_{F_2\cap F}\cap\hCB_{G_0}$. As $F$ runs over all $(n-2)$-dimensional faces~$F\subset\Delta$ such that $F\ne F_1,F_2$, the intersection $F_1\cap F$ runs over all $(n-3)$-dimensional faces of~$F_1$ different from~$G_0$, and the intersection $F_2\cap F$ runs over all $(n-3)$-dimensional faces of~$F_2$ different from~$G_0$. Therefore, the intersection of all hyperplanes~$\hCB_G$, where $G$ runs over all $(n-3)$-dimensional faces of~$F_1$, coincides with the intersection of all hyperplanes~$\hCB_G$, where $G$ runs over all $(n-3)$-dimensional faces of~$F_2$, i.\,e.,~$O_{F_1}=O_{F_2}$.
\end{proof}

\section{Volumes}\label{section_volume}

Recall that the $n$-dimensional volume of the unit $n$-dimensional sphere~$\bS^n$ is equal to
$$\sigma_n=\frac{2\pi^{\frac{n+1}2}}{\Gamma(\frac{n+1}2)}\,.$$
We shall denote the $k$-dimensional volume of the   $k$-dimensional simplex~$\Delta$ by~$V_k(\Delta)$. For each face~$\Delta_{I,J}$ of the cross-polytope~$P_u$, we denote its $(|I|+|J|-1)$-dimensional volume by~$V_{I,J}$.

\subsection{Relations on the volumes of  $(n-1)$-dimensional faces} 

For the flat positions~$P_0$ and~$P_{\infty}$ of a flexible cross-polytope of the simplest type~$P_u$ in~$\bS^n$, we can consider the degrees~$\deg P_0$ and~$\deg P_{\infty}$ of the mappings
$P_0,P_{\infty}\colon K_n\to\bS^{n-1}$ respectively. Recall that, by definition, the \textit{degree\/} is the algebraic number of pre-images of an arbitrary regular value of the mapping considered, where to each point in the pre-image is assigned the sign plus if the differential of the mapping at this point preserves the orientation, and the sign minus otherwise.  All necessary facts on the degrees of mappings can be found, for instance, in~\cite[Sect.~8.7,~18.1]{Pra04}. To fix the signs of the degrees~$\deg P_0$ and~$\deg P_{\infty}$, we need to choose the orientation of the great sphere~$\bS^{n-1}$. We agree to choose this orientation in such a way that the (coinciding to each other) restrictions of the mappings~$P_0$ and~$P_{\infty}$ to the simplex $[\ba_1\ldots\ba_n]$ preserve the orientation.

\begin{lem}\label{lem_deg}
The degree\/ $\deg P_0$ is equal to\/~$1$ if\/ $s_is_i'=-1$ for all\/~$i$, and is equal to\/~$0$ if at least one of the numbers\/~$s_is_i'$ is equal to\/~$1$. The degree\/ $\deg P_{\infty}$ is equal to\/~$1$ if\/ $s_is_i'=1$ for all\/~$i$, and is equal to\/~$0$ if at least one of the numbers\/~$s_is_i'$ is equal to\/~$-1$.
\end{lem}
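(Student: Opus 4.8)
The first assertion for $P_0$ is just Theorem~\ref{theorem_homeo}: if $s_is_i'=-1$ for all $i$, then $P_0$ is a homeomorphism of $K_n$ onto $\bS^{n-1}$, so $\deg P_0=\pm1$, and the orientation convention (the restriction of $P_0$ to $[\ba_1\ldots\ba_n]$ preserves orientation) forces $\deg P_0=1$; the ``$\deg P_\infty=1$'' case follows the same way from the Remark after Theorem~\ref{theorem_homeo}. So it remains to prove $\deg P_0=0$ when some $s_ks_k'=1$ (and, symmetrically, $\deg P_\infty=0$ when some $s_ks_k'=-1$). The plan is to deduce this from the stronger statement that $P_0$ is then \emph{not surjective} onto $\bS^{n-1}$, since a map into $\bS^{n-1}$ that omits a point is null-homotopic and hence of degree $0$.

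To get non-surjectivity I would work in the vector model, where $\bS^{n-1}=\bS^n\cap\spa(\bn_1,\ldots,\bn_n)$ and each facet $\Delta_{I,[n]\setminus I}$ of $P_0$ is the spherical simplex $\bS^{n-1}\cap\mathrm{cone}\bigl(\{\ba_i(0):i\in I\}\cup\{\bb_j(0):j\notin I\}\bigr)$. If the $2n$ vertex-vectors $\ba_i(0),\bb_i(0)$ all lie strictly on one side of some hyperplane through the origin --- equivalently, if $0\notin\mathrm{conv}\bigl(\{\ba_i(0),\bb_i(0):i=1,\ldots,n\}\bigr)$ --- then every such simplicial cone lies in the corresponding open half-space, so $P_0(K_n)$ is contained in an open hemisphere and misses the antipode of its centre. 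Reading a putative relation $0=\sum_i\alpha_i\ba_i(0)+\sum_i\beta_i\bb_i(0)$ (with $\alpha_i,\beta_i\ge0$, not all $0$) in the dual basis $\bc_1,\ldots,\bc_n$, where by \eqref{eq_bd} and \eqref{eq_param_SL} one has $\ba_i(0)=s_i\bc_i/|\bc_i|$ and $\bd_i(0)=\sum_jh_{ij}\bc_j$, one sees that such a relation exists if and only if there is a nonzero $t=(t_1,\ldots,t_n)$ with $t_i\ge0$ and $s_i\sum_jh_{ji}s_j't_j\le0$ for every $i$. Hence the goal becomes: no such $t$ exists.

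To establish this I would first observe that $\deg P_0$ is an integer-valued, locally constant function of the admissible data $(G,\blambda)$, which form a connected set (a convex family of positive definite matrices with unit diagonal, times $\{0<\lambda_1<\cdots<\lambda_n\}$, minus a subset of positive codimension); so $\deg P_0$ depends only on the sign vector $\bs$, and it suffices to treat one convenient $(G,\blambda)$. Choosing $G$ so that the off-diagonal entries $h_{ij}$ are suitably small or of prescribed sign (note $h_{ij}>0$ automatically for $i<j$, while for $i>j$ the sign of $h_{ij}$ is at our disposal through $g_{ij}\in(-1,1)$), one can then argue as follows: for $i=k$ the inequality reads $t_k+s_k\sum_{j\ne k}h_{jk}s_j't_j\le0$, which, combined with $t\ge0$ and the chosen control on $H$, forces $t_k$ and then successively the remaining $t_j$ to vanish --- a contradiction. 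The statement for $P_\infty$ is obtained by the identical computation at $u=\infty$, where \eqref{eq_bd} gives $\bd_i(\infty)=\sum_jh_{ij}\bc_j-2\bn_i$, so that $H$ is replaced by $H-2G$; since $(H-2G)_{ii}=h_{ii}-2g_{ii}=-1$, the relevant hypothesis there becomes $s_ks_k'=-1$ for some $k$.

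The only genuinely nonformal step is the last one --- the choice of $G$ and the deduction $t=0$ --- which is where the precise (Cauchy-type) formula for $h_{ij}$ and the ordering of the $\lambda_i$ are used; the reduction to non-surjectivity, the passage to the convex-hull criterion, and the invariance of $\deg P_0$ under deformation of $(G,\blambda)$ are all routine. As a consistency check: when all $s_is_i'=-1$ such a $t$ does exist (for the regular cross-polytope, $0$ lies in the convex hull of the vertices), in accordance with $P_0$ being onto.
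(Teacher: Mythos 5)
Your first half (the degree-$1$ cases) coincides with the paper's argument and is fine, as is the observation that omitting a single point of $\bS^{n-1}$ forces degree $0$. The degree-$0$ half, however, is built on an intermediate claim that is false: you want $0\notin\mathrm{conv}\{\ba_i(0),\bb_i(0)\}$, so that the whole image of $P_0$ sits in an open hemisphere. Whenever the sign pattern is mixed this fails, including for the ``convenient'' data you propose. Concretely, take $n=2$, $G=E$, $s_1=1$, $s_1'=-1$, $s_2=s_2'=1$. Then $\bc_i=\bn_i=\be_i$, $h_{21}=-h_{12}$ with $h_{12}=2\lambda_1\lambda_2/(\lambda_2^2-\lambda_1^2)>0$, and the four vertices of $P_0$ sit on the great circle at angles $0$, $\pi/2$, $\pi/2+\varepsilon$, $\pi+\varepsilon$, where $\varepsilon=\arctan h_{12}>0$. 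Every gap between consecutive vertices is strictly less than $\pi$, so the origin lies in the interior of their convex hull; correspondingly, the image of $P_0$ is an arc of length $\pi+\varepsilon>\pi$, contained in no hemisphere (it is nonetheless non-surjective, consistent with the lemma). The same phenomenon persists in general: if some $s_js_j'=-1$ and the off-diagonal entries of $H$ are small, then $\bb_j(0)\approx-\ba_j(0)$, so the vertex set nearly contains an antipodal pair and strict separation from the origin is hopeless. Thus no choice of $(G,\blambda)$ of the kind you describe can make your system ``$t\ge0$, $s_i\sum_j h_{ji}s_j't_j\le0$'' infeasible; your deformation argument for the constancy of $\deg P_0$ in $(G,\blambda)$ is sound but cannot rescue a target statement that fails for every admissible datum with mixed signs. (Your own consistency check at the end was the warning sign: the indices with $s_js_j'=-1$ behave exactly like the all-minus case and already push $0$ into the hull.)

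Non-surjectivity is true, but the paper proves it by exhibiting one omitted point rather than an omitted hemisphere. Fix $k$ with $s_ks_k'=1$ and let $\widetilde P_u$ be obtained from $P_u$ by replacing every vertex $\bb_i$ with $s_is_i'=1$ by its antipode; the new signs satisfy $\tilde s_i\tilde s_i'=-1$ for all $i$, so $\widetilde P_0$ is embedded by Theorem~\ref{theorem_homeo}. If $-\bb_k$ lay on a facet $\Delta_{A,B}$ of $P_0$, one could write $-\bb_k=\sum_{i\in A}\mu_i\ba_i+\sum_{i\in B}\mu_i\bb_i$ with $\mu_i\ge0$; moving the terms with $s_is_i'=1$ to the left-hand side produces a nonzero vector whose normalization lies simultaneously in two faces of $\widetilde P_0$ having no common vertex, contradicting embeddedness. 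Hence $-\bb_k\notin P_0(K_n)$ and $\deg P_0=0$. This is the step you need in place of the convex-hull criterion; the rest of your framework (orientation conventions, homotopy invariance, the symmetric treatment of $P_\infty$) can stay.
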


\begin{proof}
We shall prove the first assertion of the lemma. The proof of the second assertion is completely similar. It follows easily from Theorem~\ref{theorem_homeo} that $\deg P_0=1$ if $s_is_i'=-1$ for all~$i$. 

Assume that $s_ks_k'=1$ for some~$k$. Consider the flexible cross-polytope $\widetilde{P}_u$ obtained from~$P_u$ by replacing all its vertices~$\bb_i(u)$ such that $s_is_i'=1$ with the antipodal points. Then the signs corresponding to the flexible cross-polytope $\widetilde{P}_u$ are $\tilde{s}_i=s_i$ and $\tilde{s}_i'=-s_i$ for all~$i$. Hence, by Theorem~\ref{theorem_homeo}, the cross-polytope $\widetilde{P}_0$ is embedded. We shall denote the vertices~$\ba_i(0)$ and $\bb_i(0)$ of the flat cross-polytope~$P_0$ simply by~$\ba_i$ and~$\bb_i$ respectively. The vertices of the cross-polytope~$\widetilde{P}_0$ are the points~$\ba_i$ and~$\tilde{\bb}_i=-s_is_i'\bb_i$. Let us show that the point  $\tilde\bb_k=-\bb_k$ does not lie in the image of the mapping~$P_0$, that is, in the union of faces of the cross-polytope~$P_0$. Assume the converse. Then $\tilde\bb_k\in\Delta_{A,B}$ for some facet~$\Delta_{A,B}$ of~$P_0$, $A\sqcup B=[n]$. Therefore, 
$$
\tilde\bb_k=\sum_{i\in A}\mu_i\ba_i+\sum_{i\in B}\mu_i\bb_i
$$
for some non-negative coefficients~$\mu_i$. Denote by~$B_+$ and~$B_-$ the subsets of~$B$ consisting of all~$i$ such that $s_is_i'=1$ and~$s_is_i'=-1$ respecttively. Then
$$
\tilde\bb_k+\sum_{i\in B_+}\mu_i\tilde\bb_i=\sum_{i\in A}\mu_i\ba_i+\sum_{i\in B_-}\mu_i\tilde\bb_i.
$$
Since the vectors $\bb_1,\ldots,\bb_n$ are linearly independent, the vector~$\bv$ standing in both sides of this equality is non-zero. Then the point~$\bv/|\bv|$ lies in the faces~$\widetilde{\Delta}_{\emptyset,B_+\cup\{k\}}$ and $\widetilde{\Delta}_{A,B_-}$ of~$\widetilde{P}_0$, which is impossible, since these faces do not have common vertices and the cross-polytope~$\widetilde{P}_0$ is embedded. (The number $k$ can either belong or not belong to~$B_+$, but, for sure, does not belong to~$B_-$). The obtained contradiction shows that the image of the mapping $P_0\colon K_n\to\bS^{n-1}$ does not coincide with the whole sphere~$\bS^{n-1}$, which implies immediately that $\deg P_0=0$.
\end{proof}

We denote by~$Y_+ $ (respectively, by $Y_-$) the subset of~$[n]$ consisting of all~$i$ such that $s_is_i'=1$ (respectively, $s_is_i'=-1$); then $Y_+\sqcup Y_-=[n]$.

\begin{theorem}\label{theorem_n-1}
Let\/ $P_u$ be a flexible cross-polytope of the simplest type in\/~$\X^n,$ and let\/ $Y$ be one of the two subsets\/~$Y_+$ and\/~$Y_-$ corresponding to it. Then 
\begin{equation}\label{eq_n-1}
\sum_{A\sqcup B=[n]}(-1)^{|B\cap Y|}V_{A,B}=\left\{
\begin{aligned}
&0&&\text{if\/ $\X^n=\E^n$ or\/~$\Lambda^n,$}\\
&0&&\text{if\/ $\X^n=\bS^n$ and\/ $Y\ne\emptyset,$}\\
&\sigma_{n-1}&&\text{if\/ $\X^n=\bS^n$ and\/ $Y=\emptyset.$}
\end{aligned}
\right.
\end{equation}
\end{theorem}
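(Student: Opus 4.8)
The plan is to interpret the left-hand side of~\eqref{eq_n-1} as the integral of the volume form of~$\X^{n-1}$ over the singular cycle $(P_0)_*[K_n]$, where $P_0$ is one of the two flat positions of~$P_u$, and then to evaluate that integral topologically; the case $Y=Y_+$ will be treated via~$P_0$ and the case $Y=Y_-$ via~$P_{\infty}$, the two arguments being completely parallel. First recall that both $P_0$ and $P_{\infty}$ lie in a hyperplane $\X^{n-1}\subset\X^n$ (a great sphere if $\X^n=\bS^n$): in formulae~\eqref{eq_bd},~\eqref{eq_param_SL},~\eqref{eq_param_E} the $\bm$-components vanish at $u=0$ and at $u=\infty$, so all vertices then lie in the subspace $\spa(\bn_1,\ldots,\bn_n)$. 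Since all faces of~$P_u$ remain congruent during the flexion, the volume $V_{A,B}$ of the facet~$\Delta_{A,B}$ is the same for~$P_u$, $P_0$ and~$P_{\infty}$. Orient~$\X^{n-1}$ so that $P_0$ is orientation-preserving on $[\ba_1\ldots\ba_n]$ (the normalization adopted before Lemma~\ref{lem_deg}), and for each facet $\Delta_{A,B}$ of~$P_0$ put $\varepsilon_{A,B}=+1$ if $P_0$ is orientation-preserving on~$\Delta_{A,B}$ and $\varepsilon_{A,B}=-1$ otherwise; thus $\varepsilon_{[n],\emptyset}=1$. Since $P_0$ carries $\Delta_{A,B}$ onto a geodesic $(n-1)$-simplex of volume~$V_{A,B}$, we get $\int_{\Delta_{A,B}}P_0^*\omega=\varepsilon_{A,B}V_{A,B}$ for the volume form~$\omega$ of~$\X^{n-1}$, hence
\[
\sum_{A\sqcup B=[n]}\varepsilon_{A,B}\,V_{A,B}=\int_{K_n}P_0^*\omega .
\]

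The combinatorial core of the proof is the identity $\varepsilon_{A,B}=(-1)^{|B\cap Y_+|}$. I would prove it by moving along the dual graph of the cross-polytope, whose vertices are the facets $\Delta_{A,B}$ and whose edges are the $(n-2)$-faces $\Delta_{I,J}$; this graph is connected, and traversing the edge~$\Delta_{I,J}$ transfers the single index $k\in[n]\setminus(I\cup J)$ between $A$ and~$B$. Near an interior point of~$\Delta_{I,J}$ the line orthogonal to $\langle P_0(\Delta_{I,J})\rangle$ inside~$\X^{n-1}$ is one-dimensional, and the two facets at~$\Delta_{I,J}$ lie in opposite open half-spaces or in the same open half-space according as $\psi_{I,J}(0)$ equals~$\pi$ or~$0$. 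A short computation with local orientations — already visible in the model case $n=2$ of a flat quadrangle — shows that $\varepsilon$ is unchanged across a hinge with angle~$\pi$ (the two facets unfold flatly) and changes sign across a hinge with angle~$0$ (they fold back onto one half-space). By Lemma~\ref{lem_dihedral}, $\psi_{I,J}(0)=0$ precisely when $s_ks_k'=1$, i.e.\ $k\in Y_+$, and $\psi_{I,J}(0)=\pi$ otherwise; accumulating these factors from~$\varepsilon_{[n],\emptyset}=1$ yields $\varepsilon_{A,B}=(-1)^{|B\cap Y_+|}$, the total factor being manifestly independent of the path. Hence the left-hand side of~\eqref{eq_n-1} for $Y=Y_+$ equals $\int_{K_n}P_0^*\omega$.

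It remains to evaluate $\int_{K_n}P_0^*\omega$. If $\X^n=\E^n$, write $\omega=dx_1\wedge\cdots\wedge dx_{n-1}=d\eta$ with $\eta=x_1\,dx_2\wedge\cdots\wedge dx_{n-1}$; applying Stokes' formula on each facet and summing, the boundary contributions over the $(n-2)$-faces cancel in pairs, because $K_n$ is an oriented pseudo-manifold and each $(n-2)$-face receives opposite induced orientations from its two facets, so $\int_{K_n}P_0^*\omega=0$. For $\X^n=\Lambda^n$ the same works, the volume form of~$\Lambda^{n-1}$ being exact as well. For $\X^n=\bS^n$ the form~$\omega$ is not exact, and instead $\int_{K_n}P_0^*\omega=(\deg P_0)\,\sigma_{n-1}$; by Lemma~\ref{lem_deg}, $\deg P_0=1$ if $Y_+=\emptyset$ and $\deg P_0=0$ otherwise, which (since then $(-1)^{|B\cap\emptyset|}=1$) gives the three cases of~\eqref{eq_n-1} for $Y=Y_+$. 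Running the whole argument at $u=\infty$ in place of $u=0$ — where Lemma~\ref{lem_dihedral} gives $\psi_{I,J}(\infty)=0$ exactly for $k\in Y_-$, so the signs for $P_{\infty}$ satisfy $\varepsilon_{A,B}=(-1)^{|B\cap Y_-|}$, while Lemma~\ref{lem_deg} gives $\deg P_{\infty}=1$ iff $Y_-=\emptyset$ — establishes~\eqref{eq_n-1} for $Y=Y_-$. The only genuinely delicate point is the local sign rule for~$\varepsilon$ across a hinge; everything else is bookkeeping together with the standard degree-via-volume identity and Stokes' theorem.
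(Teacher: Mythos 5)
Your proof is correct and follows essentially the same route as the paper's: pass to the flat position $P_0$ (resp.\ $P_{\infty}$), show via the $0/\pi$ dichotomy of Lemma~\ref{lem_dihedral} that the local orientation sign of the facet $\Delta_{A,B}$ is $(-1)^{|B\cap Y|}$, and then conclude from $\deg P_0$ (Lemma~\ref{lem_deg}, or non-surjectivity in $\E^{n-1}$ and $\Lambda^{n-1}$) that the signed sum of facet volumes is $0$ or $\sigma_{n-1}$. Your only departure is packaging the final step as $\int_{K_n}P_0^*\omega$ evaluated by Stokes' theorem and the degree formula, where the paper counts signed preimages of a generic point; these are the same argument.
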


\begin{proof}
We shall prove relation~\eqref{eq_n-1} for $Y=Y_+$ by studying the flat position~$P_0$ of the cross-polytope. Relation~\eqref{eq_n-1} for $Y=Y_-$ is obtained in the same way  by studying the flat position~$P_{\infty}$. If $Y_+=\emptyset$, then it follows from Theorem~\ref{theorem_homeo} that the sum of the volumes of the facets of~$P_0$ is equal to the volume~$\sigma_{n-1}$ of~$\bS^{n-1}$, which yields~\eqref{eq_n-1}. 

Assume that $\X^n=\E^n$ or~$\Lambda^n$ or $\X^n=\bS^n$ and $Y_+\ne\emptyset$. Then the degree of the mapping $P_0\colon K_n\to\X^{n-1}$ is equal to zero. For $\X^n=\bS^n$, this follows from Lemma~\ref{lem_deg}. For $\X^n=\E^n$ and~$\Lambda^n$, this is also true, since the image $P_0(K_n)$ is compact, hence, the mapping~$P_0$ is not surjective. Therefore, a generic point in~$\X^{n-1}$ is covered by the facets of~$P_0$ embedded into~$\X^{n-1}$ with the embedding preserving the orientation as many times as by the facets of~$P_0$ embedded into~$\X^{n-1}$ with the embedding  reversing the orientation. Hence the sum of the volumes of the facets  of~$P_0$ embedded into~$\X^{n-1}$ with the embedding preserving the orientation is equal to the sum of the volumes of the facets  of~$P_0$ embedded into~$\X^{n-1}$ with the embedding reversing the orientation. The embedding of a facet~$\Delta_{A,B}$ into~$\X^{n-1}$ preserves the orientation if and only if it is possible to travel from~$\Delta_{[n],\emptyset}$ to~$\Delta_{A,B}$ in~$K_n$ crossing finitely many times the $(n-2)$-dimensional faces of~$P_0$, and besides, crossing even number of times the $(n-2)$-dimensional faces of~$P_0$ the dihedral angles at which are equal to~$0$. It follows from formula~\eqref{eq_dihedral} that this is the case if and only if the number $|B\cap Y_+|$ is even, which yields formula~$\eqref{eq_n-1}$. 
\end{proof}

\begin{cor}
If\/ $\X^n=\E^n$ or\/~$\Lambda^n,$ than not all of the numbers~$s_is_i'$ are the same. 
\end{cor}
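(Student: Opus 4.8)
The plan is to obtain this at once from Theorem~\ref{theorem_n-1} by a positivity argument. First I would suppose, for contradiction, that all the numbers $s_is_i'$, $i\in[n]$, are equal. By the definition of $Y_+$ and $Y_-$ this means that one of the two subsets $Y_+$, $Y_-$ is all of $[n]$ and the other is empty; let $Y$ denote the empty one. Since $Y$ is one of $Y_+$, $Y_-$, Theorem~\ref{theorem_n-1} applies to this choice.

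Next, since $\X^n=\E^n$ or $\Lambda^n$, the right-hand side of~\eqref{eq_n-1} vanishes, so $\sum_{A\sqcup B=[n]}(-1)^{|B\cap Y|}V_{A,B}=0$. But $Y=\emptyset$ forces $|B\cap Y|=0$ for every $B\subseteq[n]$, hence every sign $(-1)^{|B\cap Y|}$ equals $+1$, and the relation collapses to $\sum_{A\sqcup B=[n]}V_{A,B}=0$.

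Finally I would invoke non-degeneracy of the cross-polytope $P_u$: each of the $2^n$ facets $\Delta_{A,B}$, $A\sqcup B=[n]$, is then a genuine $(n-1)$-dimensional simplex, so $V_{A,B}>0$. Thus the left-hand side above is a sum of finitely many strictly positive reals, hence strictly positive, contradicting the equality $\sum_{A\sqcup B=[n]}V_{A,B}=0$. This contradiction proves the corollary.

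I do not expect any genuine obstacle here; the statement is a one-line consequence of Theorem~\ref{theorem_n-1}. The only two points deserving a moment's care are that Theorem~\ref{theorem_n-1} indeed permits taking $Y$ to be whichever of $Y_\pm$ happens to be empty, and that the strict positivity $V_{A,B}>0$ is exactly what non-degeneracy of $P_u$ guarantees, all faces of a non-degenerate polyhedron being non-degenerate simplices.
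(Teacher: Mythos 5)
Your proof is correct and is precisely the argument the paper intends: the corollary is stated immediately after Theorem~\ref{theorem_n-1} with no separate proof, and the expected reasoning is exactly your observation that if all $s_is_i'$ coincide then one of $Y_\pm$ is empty, whence \eqref{eq_n-1} would force the sum of the strictly positive facet volumes to vanish. No gaps; the two points you flag (that $Y$ may be taken to be the empty one of $Y_\pm$, and that non-degeneracy gives $V_{A,B}>0$) are indeed the only things to check, and both hold.
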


\subsection{A property of circumscribed cross-polytopes}
Let $P$ be an $m$-dimensional cross-polytope in~$\X^m$, $m\ge 2$, such that  $P$ is circumscribed about a hypersurface~$\Omega$ which is a sphere or an orisphere or an equidistant hypersurface, or the hyperplanes of all facets of~$P$ form the same angle $\alpha\in(0,\pi/2)$ with some hyperplane~$H$, or $\X^m=\Lambda^m$ and the hyperplanes of all facets of~$P$ are parallel to the same hyperplane~$H$. Let $\CF=\CF_+\sqcup\CF_-$ be the decomposition of the set of facets of~$P$ into two classes given by the hypersurface~$\Omega$ or by the hyperplane~$H$. Consider the chess colouring of facets of~$P$, and denote by~$\CF_b$ and~$\CF_w$ the sets of black and white facets respectively.

It is well known that for a quadrangle circumscribed about a circle the sums of the lengths of its opposite sides are equal to each other. The following lemma is a direct generalization of this fact.

 \begin{lem}\label{lem_rel_basic}
If\/ $\X^m=\E^m$ or~$\Lambda^m$, then
\begin{equation}\label{eq_rel_basic}
\sum_{F\in\CF_+\cap\CF_b}V_{m-1}(F)-\sum_{F\in\CF_-\cap\CF_b}V_{m-1}(F)-\sum_{F\in\CF_+\cap\CF_w}V_{m-1}(F)+\sum_{F\in\CF_-\cap\CF_w}V_{m-1}(F)=0.
\end{equation}
\end{lem}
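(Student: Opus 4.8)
The plan is to mimic the classical proof of Pitot's theorem (for a tangential polygon the two sums of alternating sides agree) by cutting each facet of $P$ into the cones over its own facets with apex the point of tangency, and then cancelling the pieces in pairs across the $(m-2)$-faces of $P$.

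First I would fix notation: write the left-hand side of \eqref{eq_rel_basic} as $\Sigma=\sum_{F}\chi_F\epsilon_F V_{m-1}(F)$, summed over all facets $F=\Delta_{A,B}$ of $P$, where $\chi_F=(-1)^{|B|}\in\{\pm1\}$ records the chess colour of $F$ and $\epsilon_F\in\{\pm1\}$ records whether $F\in\CF_+$ or $F\in\CF_-$. I would treat first the case in which $\Omega$ is a genuine sphere with centre $O$, letting $p_F\in\mathrm{aff}\,F$ be the point where $\mathrm{aff}\,F$ touches $\Omega$. The key identity is the signed cone decomposition $V_{m-1}(F)=\sum_{G\subset F}\sigma(F,G)\,V_{m-1}(\mathrm{conv}(\{p_F\}\cup G))$, the sum over the $(m-2)$-faces $G$ of $F$, with $\sigma(F,G)=\pm1$ according as $p_F$ lies, inside $\mathrm{aff}\,F$, on the same side of $\mathrm{aff}\,G$ as the interior of $F$ or not; this holds in $\E^m$ and $\Lambda^m$ whether or not $p_F\in F$, being a purely incidence-geometric fact. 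Substituting into $\Sigma$ and collecting terms by the $(m-2)$-face $G$ (each of which lies in exactly two facets $F$ and $F'$ of $P$), the whole problem reduces to showing that for every $G$ the contributions $\chi_F\epsilon_F\sigma(F,G)V_{m-1}(\mathrm{conv}(\{p_F\}\cup G))$ and $\chi_{F'}\epsilon_{F'}\sigma(F',G)V_{m-1}(\mathrm{conv}(\{p_{F'}\}\cup G))$ are opposite.

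For this I would establish three things. (1)~Neighbouring facets of a cross-polytope differ by replacing one vertex $\ba_i$ by $\bb_i$, hence have opposite chess colours, so $\chi_F=-\chi_{F'}$. (2)~The two sub-simplices have equal volume: since $O$ is equidistant (distance $\rho$) from $\mathrm{aff}\,F$ and $\mathrm{aff}\,F'$, it lies on one of the two bisecting hyperplanes $\CB_G$ of the dihedral angle of $P$ along $G$, and the reflection $R=R_{\CB_G}$ fixes $O$, $\Omega$ and $\mathrm{aff}\,G$ (pointwise), interchanges $\mathrm{aff}\,F$ and $\mathrm{aff}\,F'$, hence sends $p_F$ to $p_{F'}$ and maps one sub-simplex isometrically onto the other. (Equivalently, $|p_Fv|=|p_{F'}v|$ is the tangent length from $v$ to $\Omega$ for every vertex $v$ of $G$.) (3)~The sign identity $\epsilon_F\sigma(F,G)=\epsilon_{F'}\sigma(F',G)$: I would let $\nu\in\{\pm1\}$ record whether $R$ carries the interior-of-$F$ half of $\mathrm{aff}\,F$ cut off by $\mathrm{aff}\,G$ onto the corresponding half of $\mathrm{aff}\,F'$ or onto the other one — so $\sigma(F,G)=\nu\sigma(F',G)$ at once, since $R(p_F)=p_{F'}$ — and then check, by a local computation with the exterior normals of $P$, that $R$ (which reverses the orientation of $\X^m$ and, being a reflection of $\Omega$ across $\Omega\cap\CB_G$, reverses the orientation of $\Omega$) changes $\epsilon$ by the \emph{same} factor, i.e.\ $\epsilon_F=\nu\epsilon_{F'}$. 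Then $\epsilon_F\sigma(F,G)=\nu^2\epsilon_{F'}\sigma(F',G)=\epsilon_{F'}\sigma(F',G)$, and combining (1)--(3) the $G$-contribution vanishes, so $\Sigma=0$.

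The cases of an orisphere or an equidistant hypersurface would go through verbatim, with $\CB_G$ passing through the centre at infinity, respectively perpendicular to the base $H$; $R_{\CB_G}$ still fixes $\Omega$ and $\mathrm{aff}\,G$ and swaps $\mathrm{aff}\,F$ with $\mathrm{aff}\,F'$. The genuinely different — and, I expect, most delicate — cases are the two with no tangency point: when all facet hyperplanes make a fixed angle $\alpha\in(0,\pi/2)$ with $H$, or (in $\Lambda^m$) are all parallel to $H$. These I would handle either by approximating $P$ by cross-polytopes circumscribed about spheres (respectively equidistant hypersurfaces) whose radii tend to infinity, or, for the Euclidean constant-angle case, by noting that $\epsilon_F V_{m-1}(F)$ is a fixed nonzero multiple of the $\be$-component of the vector area $V_{m-1}(F)\bn_F$ of $F$ ($\bn_F$ the exterior unit normal of $P$, $\be$ a unit normal of $H$) and then showing $\sum_F\chi_F V_{m-1}(F)\bn_F\perp\be$ by a short computation with vector areas; the $\nu$-bookkeeping of step (3) is identical in all cases. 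The main obstacles are thus the orientation/sign verification in (3) and the treatment of the two tangency-free configurations.
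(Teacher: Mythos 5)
Your treatment of the circumscribed cases (sphere, orisphere, equidistant hypersurface) is essentially the paper's own argument: the signed cone decomposition of each facet from its tangent point, the congruence of the two cones over a shared $(m-2)$-face $G$ via the reflection in the bisecting hyperplane $\CB_G$ through the centre and through $\mathrm{aff}\,G$, and the pairwise cancellation; you are in fact more explicit than the paper, which only asserts that ``the signs are consistent''. Steps (1) and (2) are correct, and the sign bookkeeping in (3) is plausible at the same level of detail as the original.

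The genuine gaps are in the two tangency-free configurations, which you rightly flag but do not close. For the $\Lambda^m$-parallel case no approximation is needed: each $\mathrm{aff}\,F$ meets $H$ in exactly one point $A_F$ of the absolute, the cones $[A_FG]$ are finite-volume simplices with one ideal vertex, and $R_{\CB_G}$ (being perpendicular to $H$) carries $A_F$ to $A_{F'}$, so the circumscribed argument applies verbatim --- this is exactly the paper's choice of apex in that case. The constant-angle case is the real problem, and neither of your two proposals works as stated. The vector-area reduction is circular: up to a global sign one has $\langle V_{m-1}(F)\bn_F,\be\rangle=\epsilon_F V_{m-1}(F)\cos\alpha$, so the assertion $\sum_F\chi_F V_{m-1}(F)\bn_F\perp\be$ \emph{is} equation~\eqref{eq_rel_basic} in this case; and there is no free identity to invoke, because $\chi\cdot[P]$ is not a cycle (its boundary is $\pm 2\sum_G[G]$), so $\sum_F\chi_F V_{m-1}(F)\bn_F$ does not vanish for a general cross-polytope the way $\sum_F V_{m-1}(F)\bn_F$ does. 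The approximation route founders on the fact that a cross-polytope is not a simple polytope: every vertex lies on $2^{m-1}$ facets, so one cannot independently nudge the $2^m$ facet hyperplanes into tangency with a large sphere and expect the required concurrences (hence the combinatorial type, and the limiting classes $\CF_\pm$) to survive; the existence of such an approximating family is an unproved claim. What is actually needed here is a choice of apexes $A_F$ inside the $(m-2)$-planes $\mathrm{aff}\,F\cap H$ that is coherent under all the reflections $R_{\CB_G}$, so that the cones over a shared $G$ are again congruent; your write-up supplies neither this nor a substitute.
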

 
\begin{proof}
The proof is the same as for the two-dimensional case. For each facet~$F$, we denote by~$A_{F}$ either the tangent point of the hyperplane of~$F$ and the hypersurface~$\Omega$ or the intersection point of the hyperplane of~$F$ and the hyperplane~$H$.  (If $\X^m=\Lambda^m$ and the hyperplane of~$F$ is parallel to~$H$, then the point~$A_F$ lies on the absolute.) Then the volume of the simplex ~$F$ can be decomposed into the algebraic sum of the volumes of the simplices~$[A_FG]$ spanned by the point~$A_F$ and facets~$G$ of the simplex~$F$:
\begin{equation}\label{eq_razl}
V_{m-1}(F)=\sum_{G\subset F,\,\dim G=m-2} \pm V_{m-1}([A_FG]).
\end{equation}
It is not hard to check that signs in formulae~\eqref{eq_rel_basic} and~\eqref{eq_razl} are consistent so that the volumes of simplices $[A_{F_1}G]$ and $[A_{F_2}G]$ enter the left-hand side of~\eqref{eq_rel_basic} with the opposite signs for any two facets~$F_1$ and~$F_2$ with a common $(m-2)$-dimensional face~$G$. But the simplices $[A_{F_1}G]$ and $[A_{F_2}G]$ are congruent to each other. Hence their volumes are equal to each other, which implies equality~\eqref{eq_rel_basic}.
\end{proof} 

If~$\X^m=\bS^m$, then formula~\eqref{eq_rel_basic} is in general not correct. The matter is that formula~\eqref{eq_razl} is not correct. Namely, the left-hand side and the right-hand side of~\eqref{eq_razl} can differ by the volume~$\sigma_{m-1}$  of~$\bS^{m-1}$. Nevertheless, there is an important special case in which formula~\eqref{eq_rel_basic} is correct. 
\begin{lem}\label{lem_rel_basic_S_strong}
Suppose that\/ $\X^m=\bS^m$. Let\/ $\overline{\bS^m_+}\subset\bS^m$ be a closed
 hemisphere bounded by the\/ $(m-1)$-dimensional great sphere concentric with the small sphere\/~$\Omega$ about which the cross-polytope\/~$P$ is circumscribed. Assume that\/ $P$ is contained in\/~$\overline{\bS^m_+}$. Then formula~\eqref{eq_rel_basic} holds true for~$P$.
\end{lem}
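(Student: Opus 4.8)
The plan is to reduce the spherical case to the Euclidean/Lobachevsky case treated in Lemma~\ref{lem_rel_basic} by showing that the only obstruction to formula~\eqref{eq_rel_basic} in~$\bS^m$ --- namely the possible discrepancy by~$\sigma_{m-1}$ in the decomposition~\eqref{eq_razl} --- never occurs under the hypothesis that~$P$ lies in the closed hemisphere~$\overline{\bS^m_+}$ whose bounding great sphere is concentric with~$\Omega$. First I would set up the same decomposition as in the proof of Lemma~\ref{lem_rel_basic}: for each facet~$F$ of~$P$, let $A_F$ be the tangent point of the hyperplane of~$F$ with the small sphere~$\Omega$, and write the signed decomposition $V_{m-1}(F)=\sum_{G\subset F}\pm V_{m-1}([A_FG])$ of~$F$ into cones over its $(m-2)$-dimensional faces with apex~$A_F$. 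The point is that this decomposition of a spherical simplex into cones from an interior-or-exterior apex is an honest identity precisely when $A_F$ lies in the hemisphere (of the great sphere carrying~$F$) on which~$F$ sits, i.e.\ when $F$ and $A_F$ are not separated by the great $(m-2)$-sphere through the boundary of~$F$; it acquires the correction term~$\sigma_{m-1}$ exactly when they are antipodally separated.

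Next I would exploit the concentricity hypothesis. Write~$\bo$ for the centre of the great sphere $\partial\overline{\bS^m_+}$, which by assumption is also the centre of~$\Omega$, and let $\rho\in(0,\pi/2)$ be the spherical radius of~$\Omega$, so~$\Omega$ is the set of points at distance~$\rho$ from~$\bo$ and lies strictly inside~$\bS^m_+$. Every tangent point $A_F$ then lies on~$\Omega$, hence at distance exactly~$\rho<\pi/2$ from~$\bo$, so $A_F\in\bS^m_+$. On the other hand the hyperplane of~$F$ is tangent to~$\Omega$ at~$A_F$, so its pole (the point of~$\bS^m$ at distance~$\pi/2$ from every point of that hyperplane) is at distance~$\pi/2-\rho$ from~$A_F$, and one checks this pole lies at distance~$\rho$ from... more simply: the hyperplane of~$F$ meets~$\overline{\bS^m_+}$ in a spherical cap that does not contain the antipode of~$A_F$, because that antipode is at distance~$\pi-\rho>\pi/2$ from~$\bo$ and hence lies in the open lower hemisphere. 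Since~$F\subset P\subset\overline{\bS^m_+}$, the facet~$F$ and its apex~$A_F$ lie in the same closed hemisphere of the great sphere spanned by~$F$; therefore the cone decomposition~\eqref{eq_razl} holds without any $\sigma_{m-1}$ correction for \emph{every} facet~$F$ of~$P$.

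With~\eqref{eq_razl} restored for all facets, the remainder of the argument is verbatim that of Lemma~\ref{lem_rel_basic}: for any two facets $F_1,F_2$ of~$P$ sharing an $(m-2)$-dimensional face~$G$, the two cone-simplices $[A_{F_1}G]$ and $[A_{F_2}G]$ are congruent (they have the same base~$G$ and, since~$P$ is circumscribed about~$\Omega$, the apices~$A_{F_1}$ and~$A_{F_2}$ lie symmetrically with respect to the bisecting hyperplane of~$F_1$ and~$F_2$ through~$G$), so their volumes coincide; and the combinatorial bookkeeping of signs shows these two equal volumes enter the left-hand side of~\eqref{eq_rel_basic} with opposite signs, so they cancel in pairs, leaving~$0$. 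I would spell out, exactly as in the proof of Lemma~\ref{lem_rel_basic}, that the sign with which $V_{m-1}([A_FG])$ enters is governed by the chess colouring of~$F$ together with the class $\CF_\pm$ of~$F$, and that crossing the shared face~$G$ flips the colour and (by the definition of the decomposition given by~$\Omega$) also flips the $\pm$ class if and only if~$A_{F_1}$ and~$A_{F_2}$ lie on the same side of~$\CB$, giving the required sign reversal in all cases.

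The main obstacle, and the only place where the hemisphere hypothesis is really used, is the step verifying that the cone decomposition~\eqref{eq_razl} holds \emph{exactly} (with no $\sigma_{m-1}$ term) for every facet of~$P$; once this is established the proof is purely formal. Care is needed here to phrase the ``same hemisphere'' condition correctly in all the degenerate positions ($A_F$ on the great sphere $\partial\overline{\bS^m_+}$, or~$A_F$ on a face of~$F$ rather than in its relative interior), but in each such limiting case both sides of~\eqref{eq_razl} are continuous in~$P$ and the identity persists by a limiting argument from nearby generic positions inside~$\overline{\bS^m_+}$.
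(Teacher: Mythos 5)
Your proposal is correct and follows essentially the same route as the paper: the whole point is that the tangent point $A_F$, being at distance $\rho<\pi/2$ from the common centre of $\Omega$ and $\partial\overline{\bS^m_+}$, lies in the open hemisphere of the great sphere $\CH_F$ whose closure contains $F$, so the cone decomposition~\eqref{eq_razl} holds without the $\sigma_{m-1}$ correction and the argument of Lemma~\ref{lem_rel_basic} repeats verbatim. The paper phrases the key observation even more briefly ($A_F$ is the projection of the centre of $\Omega$ onto $\CH_F$), but the content is identical.
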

\begin{proof}
It is sufficient to note that the point~$A_F$ is the projection of the centre of the sphere~$\Omega$ onto the $(m-1)$-dimensional great sphere~$\CH_F$ containing~$F$. Hence, the point $A_F$  lies in the open $(m-1)$-dimensional hemisphere $\CH_{F,+}=\CH_F\cap\bS^m_+$ whose closure contains~$F$. Therefore, formula~\eqref{eq_razl} holds true in this case, and the proof of Lemma~\ref{lem_rel_basic} can be repeated literally. 
\end{proof}

\subsection{Relations on the volumes of $(n-2)$-dimensional faces} 

\begin{theorem}\label{theorem_n-2}
Suppose that\/ $n\ge 3$. Then, for a flexible cross-polytope of the simplest type\/~$P_u$  either in\/~$\E^n$ or in\/~$\Lambda^n,$ we have
\begin{equation}\label{eq_cod2_rel_EL}
\sum_{I\sqcup J=[n]\setminus\{k\}}(-1)^{|J\cap X_k|}V_{I,J}=0, \qquad k=1,\ldots,n,
\end{equation} 
and for a flexible cross-polytope of the simplest type\/~$P_u$  in\/~$\bS^n,$ we have
\begin{align}
&\sum_{I\sqcup J=[n]\setminus\{k\}}(-1)^{|J\cap X_k|}V_{I,J}=0
 &&\text{if\/ $X_k\ne \emptyset$},\label{eq_cod2_rel_S1}\\
&\sum_{I\sqcup J=[n]\setminus\{k\}}V_{I,J}=\sigma_{n-2} &&\text{if\/ $X_k=\emptyset$.}
\label{eq_cod2_rel_S2}
\end{align} 
\end{theorem}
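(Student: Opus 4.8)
The plan is to read the left-hand sides of \eqref{eq_cod2_rel_EL}--\eqref{eq_cod2_rel_S2} as a single weighted sum of the facet volumes of the flat $(n-1)$-dimensional cross-polytope $P_{(k)}\subset\X^{n-1}$ from Section~\ref{section_flat}, and to extract the identity from the circumscribed structure of $P_{(k)}$ proved in Theorem~\ref{theorem_opisan} together with Lemmas~\ref{lem_rel_basic} and~\ref{lem_rel_basic_S_strong}. Indeed, the $(n-2)$-dimensional faces $\Delta_{I,J}$ of $P_0$ with $I\sqcup J=[n]\setminus\{k\}$ are exactly the facets of $P_{(k)}$, and $V_{I,J}$ is the $(n-2)$-volume of such a facet. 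By Lemma~\ref{lem_nondeg}, $P_{(k)}$ is a non-degenerate cross-polytope, so Theorem~\ref{theorem_opisan} applies: $P_{(k)}$ is circumscribed about a sphere, orisphere or equidistant hypersurface $\Omega_k$, or its facet hyperplanes all meet a fixed hyperplane $H$ at the same angle in $(0,\pi/2)$, or (in $\Lambda^{n-1}$) are all parallel to $H$; and Theorem~\ref{theorem_opisan} also identifies the decomposition $\CF(P_{(k)})=\CF_+\sqcup\CF_-$ given by $\Omega_k$ (or by $H$): one class consists of the $\Delta_{I,J}$ with $|J\setminus X_k|$ even, the other of those with $|J\setminus X_k|$ odd.

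In the Euclidean and Lobachevsky cases I would apply Lemma~\ref{lem_rel_basic} to $P_{(k)}$ with $m=n-1\ge 2$. The chess colouring of $P_{(k)}$ assigns to $\Delta_{I,J}$ the colour determined by the parity of $|J|$, since two adjacent facets differ by moving one index between $I$ and $J$. Writing the coefficient with which $V_{I,J}$ enters the left-hand side of \eqref{eq_rel_basic} as $(-1)^{\varepsilon_\pm(\Delta_{I,J})+\varepsilon_{\mathrm{bw}}(\Delta_{I,J})}$, where $\varepsilon_\pm$ records the $\CF_\pm$-class and $\varepsilon_{\mathrm{bw}}$ the colour, and using $|J\setminus X_k|+|J|\equiv |J\cap X_k|\pmod 2$, this coefficient equals $\pm(-1)^{|J\cap X_k|}$ with a single global sign. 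Hence \eqref{eq_rel_basic} is exactly \eqref{eq_cod2_rel_EL}. (In these two geometries the sign row $\bs$ is not free but constrained by the construction --- in the Euclidean case each $s_i'$ is the sign of $b_i$ in \eqref{eq_param_E2} --- and this is what rules out the degenerate configuration in which the right-hand side of \eqref{eq_cod2_rel_EL} would be forced to be nonzero; compare the Corollary after Theorem~\ref{theorem_n-1}.)

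In the spherical case, when $X_k\ne\emptyset$ the claimed value is $0$ and I would obtain it from Lemma~\ref{lem_rel_basic_S_strong} by the same sign bookkeeping; the one extra point is the hypothesis of that lemma, namely that $P_{(k)}$ lies in the closed hemisphere bounded by the great sphere concentric with $\Omega_k$. When $X_k=\emptyset$ all weights are $+1$, the claimed value is $\sigma_{n-2}$, and here $P_{(k)}$ cannot lie in that hemisphere; instead one must show that the facets of $P_{(k)}$ cover the concentric great sphere $\bS^{n-2}$ exactly once, so that their $(n-2)$-volumes add up to $\sigma_{n-2}$. I would do this by tracking, for each facet $F$ of $P_{(k)}$, the discrepancy --- either $0$ or $\sigma_{n-2}$ --- by which the two sides of \eqref{eq_razl} can differ, i.e. whether the foot of the perpendicular from the centre of $\Omega_k$ onto the great sphere $\CH_F$ of $F$ lies in the hemisphere of $\CH_F$ whose closure contains $F$, and then showing that the total discrepancy vanishes when $X_k\ne\emptyset$ and equals $\sigma_{n-2}$ when $X_k=\emptyset$, in the latter case by comparison with the homeomorphic flat position supplied by Theorem~\ref{theorem_homeo}. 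This spherical analysis --- verifying the hemisphere hypothesis of Lemma~\ref{lem_rel_basic_S_strong} when $X_k\ne\emptyset$, and pinning down the total discrepancy to be exactly $\sigma_{n-2}$ when $X_k=\emptyset$ --- is the main obstacle; once it is in place, the whole theorem reduces to the elementary parity computation above, and the Euclidean and Lobachevsky cases follow at once from Theorem~\ref{theorem_opisan} and Lemma~\ref{lem_rel_basic}.
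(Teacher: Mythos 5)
Your treatment of the Euclidean and Lobachevsky cases is exactly the paper's argument: identify the facets of $P_{(k)}$ with the faces $\Delta_{I,J}$, $I\sqcup J=[n]\setminus\{k\}$, note that the class decomposition from Theorem~\ref{theorem_opisan} is governed by the parity of $|J\setminus X_k|$ while the chess colouring is governed by the parity of $|J|$, and read off \eqref{eq_cod2_rel_EL} from Lemma~\ref{lem_rel_basic} via $(-1)^{|J\setminus X_k|+|J|}=(-1)^{|J\cap X_k|}$. That part is fine.

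The spherical case, however, is where the real work lies, and your proposal leaves it as an acknowledged ``main obstacle'' rather than a proof. Two concrete problems. First, for $X_k\ne\emptyset$ you want to invoke Lemma~\ref{lem_rel_basic_S_strong}, whose hypothesis is that $P_{(k)}$ lies in the closed hemisphere bounded by the great sphere concentric with $\Omega_k$; but $X_k\ne\emptyset$ is a combinatorial condition on the signs $s_is_i'$ and the ordering of the $\lambda_i$, and it does not imply this geometric containment. The paper only establishes the containment for \emph{one} suitably chosen sign row $\bs^*$ among the $2^{2n}$ cross-polytopes obtained by antipodal replacements of vertices (and observes that for that row all $X_k(\bs^*)$ are automatically non-empty). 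Second, your plan for $X_k=\emptyset$ --- tracking the per-facet discrepancy of $0$ or $\sigma_{n-2}$ in \eqref{eq_razl} and showing the signed total is exactly $\sigma_{n-2}$ --- is only a statement of what must be proved; no mechanism is given for controlling where the feet $A_F$ land, and the comparison with Theorem~\ref{theorem_homeo} does not obviously do this, since the facets of $P_{(k)}$ are tangent to $\Omega_k$ and do not tile the concentric great sphere. The paper closes both gaps at once by a genuinely different device: it defines the normalized sums $\overline{S}_k(\bs)$ and proves (Lemma~\ref{lem_n-2}) that they are invariant under changing any single sign, by passing to the link of the replaced vertex --- itself a lower-dimensional flexible cross-polytope of the simplest type --- and using the identity $V_{I,J}^{(1)}+V_{I,J}^{(2)}=\frac{\sigma_{n-2}}{\sigma_{n-3}}\widetilde{V}_{I,J}$ together with an induction on $n$. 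The theorem then follows from the one good sign row $\bs^*$. Without this (or an actual proof of your discrepancy claims), the spherical half of the theorem is not established.
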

\begin{proof}
Consider the flat position~$P_0$ of~$P_u$, and apply Lemmas~\ref{lem_rel_basic} and~\ref{lem_rel_basic_S_strong} to the $(n-1)$-dimensional cross-polytopes~$P_{(k)}$ defined in Section~\ref{section_flat}. Facets of $P_{(k)}$ are  $(n-2)$-dim\-en\-sional faces~$\Delta_{I,J}$ of~$P_0$ such that $I\sqcup J=[n]\setminus k$. By Theorem~\ref{theorem_opisan}, either the cross-polytope~$P_{(k)}$ is circumscribed about a hypersurface that is a sphere or an orisphere or an equidistant hypersurface, or all facets of~$P_{(k)}$ are parallel to some hyperplane, or  all facets of~$P_{(k)}$ intersect some hyperplane by the same angle. Besides, the class to which a facet~$\Delta_{I,J}$ belongs is determined by the evenness of the cardinality of the set~$J\setminus X_k$. On the other hand, the colour of the facet $\Delta_{I,J}$ is determined by the evenness of the cardinality of the set~$J$. Hence, for flexible cross-polytopes in~$\E^n$ and~$\Lambda^n$, Lemma~\ref{lem_rel_basic} immediately yield formula~\eqref{eq_cod2_rel_EL}.  

Now, let $P_u$ be a flexible cross-polytope of the simplest type in~$\bS^n$. Consider the  $2^{2n}$ spherical flexible cross-polytopes~$P^{\bs}_{u}$ obtained from~$P_u$ by replacing some of its vertices by the antipodal points, i.\,e., corresponding to the same pair~$(G,\blambda)$ and all rows of signs $\bs=(s_1,\ldots,s_n,s_1',\ldots,s_n').$
For each row of signs~$\bs$, we put
$$
\varepsilon_k(\bs)=s_1\cdots\hat{s}_k\cdots s_n=s_k(s_1\cdots s_n).
$$
The sets~$X_k$, the faces~$\Delta_{I,J}$, and the volumes~$V_{I,J}$ depend on~$\bs$. We shall denote them by~$X_k(\bs)$, $\Delta_{I,J}(\bs)$, and~$V_{I,J}(\bs)$ respectively. 
We denote by~$S_k(\bs)$ the left-hand side of~\eqref{eq_cod2_rel_S1} or~\eqref{eq_cod2_rel_S2} for the flexible cross-polytope~$P^{\bs}_u$. 

For each $u_0\ne 0,\infty$, at least one of the cross-polytopes~$P^{\bs}_{u_0}$ is contained in the closed hemisphere bounded by the $(n-2)$-dimensional great sphere concentric  with the small spheres $\Omega_1,\ldots,\Omega_n$. Therefore, completely in the same way as we have deduced formula~\eqref{eq_cod2_rel_EL} from Lemma~\ref{lem_rel_basic}, we can deduce from Lemma~\ref{lem_rel_basic_S_strong} that there exists a row of signs $\bs^*$ such that $S_k(\bs^*)=0$ for all $k=1,\ldots,n$. Notice that all sets $X_k(\bs^*)$ are non-empty, $k=1,\ldots,n$. Indeed, if a set $X_k(\bs^*)$ was empty, then the corresponding sum~$S_k(\bs^*)$ would be a sum of positive summands, therefore, would be non-zero.

Let us prove formulae~\eqref{eq_cod2_rel_S1} and~\eqref{eq_cod2_rel_S2} by the induction on the dimension~$n$. Obviously, for $n=2$ these formulae hold true, since the volume of any zero-dimensional face is equal to~$1$, and $\sigma_0=2$. Assume that formulae~\eqref{eq_cod2_rel_S1} and~\eqref{eq_cod2_rel_S2}  hold true for all $(n-1)$-dimensional spherical flexible cross-polytopes of the simplest type, and prove them for an  $n$-dimensional spherical flexible cross-polytope of the simplest type~$P_u$. Let us study how the value~$S_k(\bs)$ changes when we change the row of signs~$\bs$. We put,
$$
\overline{S}_k(\bs)=\left\{
\begin{aligned}
&\varepsilon_k(\bs)S_k(\bs)&&\text{if $X_k(\bs)\ne\emptyset$,}\\
&\varepsilon_k(\bs)(S_k(\bs)-\sigma_{n-2})&&\text{if $X_k(\bs)=\emptyset$.}
\end{aligned}
\right.
$$

\begin{lem}\label{lem_n-2}
Take any\/~$k\in[n]$. Then the numbers\/~$\overline{S}_k(\bs)$ are the same for all rows of signs~$\bs$.
\end{lem}

\begin{proof}
It is sufficient to prove that $\overline{S}_k\bigl(\bs^{(1)}\bigr)=\overline{S}_k\bigl(\bs^{(2)}\bigr)$ for any two rows of signs~$\bs^{(1)}$ and~$\bs^{(2)}$ that differ by one coordinate only. Let this coordinate be either~$s_l$ or~$s_l'$.

We put~$P^{(i)}_u=P^{\bs^{(i)}}_u$, $X^{(i)}_k=X_k\bigl(\bs^{(i)}\bigr)$, $\Delta_{I,J}^{(i)}=\Delta_{I,J}\bigl(\bs^{(i)}\bigr)$, $S^{(i)}_k=S_k\bigl(\bs^{(i)}\bigr)$, etc., $i=1,2$. The vertices of~$P^{(1)}_u$ will be denoted by~$\ba_j$ and~$\bb_j$ instead of~$\ba_j^{(1)}$ and~$\bb_j^{(1)}$ respectively. The cross-polytope~$P^{(2)}_u$ is obtained from the cross-polytope~$P^{(1)}_u$ by replacing one of its vertices~$\bv$ by the antipodal point of the sphere. We have,  $\bv=\ba_l$ if the rows~$\bs^{(1)}$ and~$\bs^{(2)}$ differ by the coordinate~$s_l$, and $\bv=\bb_l$ if the rows~$\bs^{(1)}$ and~$\bs^{(2)}$ differ by the coordinate~$s_l'$. 

If $l=k$, then $P_{(k)}^{(1)}=P_{(k)}^{(2)}$, hence, $S_k^{(1)}=S_k^{(2)}$. Besides, $X_k^{(1)}=X_k^{(2)}$ and $\varepsilon_k^{(1)}=\varepsilon_k^{(2)}$. Therefore, equality $\overline{S}_k^{(1)}=\overline{S}_k^{(2)}$ holds true. 
 
Suppose that $l\ne k$. It is easy to see that the sets~$X^{(1)}_k$ and~$X^{(2)}_k$ differ by the occurrence of the element~$l$ only, that is, either $l\notin X^{(1)}_k$ and $X^{(2)}_k=X^{(1)}_k\cup\{l\}$ or $l\notin X^{(2)}_k$ and $X^{(1)}_k=X^{(2)}_k\cup\{l\}$. We assume that $l\notin X^{(1)}_k$ and $X^{(2)}_k=X^{(1)}_k\cup\{l\}$, since the second case is completely similar.

Let $\widetilde{P}_u=L\bigl(\bv,P^{(1)}_u\bigr)$ be the link of the vertex~$\bv$ in the flexible cross-polytope~$P^{(1)}_u$. As it was shown in the introduction, the link of a vertex of a flexible polyhedron is itself a spherical flexible polyhedron. Besides, the dihedral angles of the link are equal to the corresponding dihedral angles of the initial polyhedron. Since $P^{(1)}_u$ is a cross-polytope of the simplest type, its dihedral angles vary so that the tangents of their halves are either directly or inversely proportional to each other. Hence the same property holds true for the dihedral angles of the spherical flexible cross-polytope $\widetilde{P}_u$, which implies that $\widetilde{P}_u$ is also a flexible polyhedron of the simplest type, see Remark~\ref{rem_dihedral}. We denote by~$\widetilde{\ba}_j$ and~$\widetilde{\bb}_j$ the vertices of~$\widetilde{P}_u$ that are the unit tangent vectors to the edges~$[\bv\ba_j]$ and~$[\bv\bb_j]$ respectively, $j\ne l$.  For the cross-polytope~$\widetilde{P}_u$ all objects introduced above will be marked off by a tilde, for example, $\widetilde{G}$, $\widetilde{\blambda}$, $\widetilde{\bs}$,  $\widetilde{\Delta}_{I,J}$, $\widetilde{S}_{j}$, etc. Notice that  pairs of opposite vertices of the cross-polytope~$\widetilde{P}_u$ are indexed by elements of the set~$[n]\setminus\{l\}$ rather than $[n-1]$. This is completely inessential. The only difference is that the set~$\widetilde{X}_l$ and the sets~$I$ and~$J$ for faces $\widetilde{\Delta}_{I,J}$ are also subsets of~$[n]\setminus\{l\}$. It is easy to see that~$\widetilde\blambda$ is the row~$\blambda$ with the coordinate~$\lambda_l$ deleted. Hence, the entries of the row~$\widetilde{\blambda}$ are positive and increase. Therefore, by the inductive assumption, formulae~\eqref{eq_cod2_rel_S1},~\eqref{eq_cod2_rel_S2} hold true for the cross-polytope~$\widetilde{P}_u$.   

For $u=0$, all dihedral angles of the cross-polytope~$P^{(1)}_u$  at facets~$\Delta_{I,J}^{(1)}$ such that $I\sqcup J=[n]\setminus\{j\}$ degenerates to the zero angles if $s_j^{(1)}{s_j'}^{(1)}=1$, and to the straight angles if $s_j^{(1)}{s_j'}^{(1)}=-1$. The same is true for the dihedral angles of the $(n-1)$-dimensional cross-polytope~$\widetilde{P}_u$. Besides, the dihedral angle of~$\widetilde{P}_u$ at the $(n-3)$-dimensional face~$\widetilde{F}$ cut by  the tangent cone to the $(n-2)$-dimensional face~$F$ of~$P^{(1)}_u$ is equal to the dihedral angle of~$P^{(1)}_u$ at~$F$. Therefore, $\widetilde{s}_j\widetilde{s}_j^{\,\prime}=s_j^{(1)}{s_j'}^{(1)}$ for all~$j\ne l$. Hence, $\widetilde{X}_k=X^{(1)}_k$.

If an $(n-2)$-dimensional face~$\Delta_{I,J}^{(1)}$ does not contain the vertex~$\bv$, then  $\Delta_{I,J}^{(1)}=\Delta_{I,J}^{(2)}$, hence, $V_{I,J}^{(1)}=V_{I,J}^{(2)}$. If an $(n-2)$-dimensional face~$\Delta_{I,J}^{(1)}$ contains the vertex~$\bv$, then the faces~$\Delta_{I,J}^{(1)}$ and~$\Delta_{I,J}^{(2)}$ constitute together an $(n-2)$-dimensional spherical $(n-2)$-hedron with the two antipodal vertices~$\bv$ and~$-\bv$, and the intersection of this $(n-2)$-hedron with the equatorial great sphere with respect to the poles~$\bv$ and~$-\bv$ is isometric to the simplex~$\widetilde{\Delta}_{I,J}$, see Fig.~\ref{fig_dolka}. Hence, 
$$
V_{I,J}^{(1)}+V_{I,J}^{(2)}=\frac{\sigma_{n-2}}{\sigma_{n-3}}\,\widetilde{V}_{I,J}.
$$
If $\bv=\ba_l$, then the signs $(-1)^{|J\cap X_k^{(1)}|}$ and $(-1)^{|J\cap X_k^{(2)}|}$ in the sums~$S_k^{(1)}$ and~$S_k^{(2)}$ respectively coincide to each other whenever $\bv\in\Delta_{I,J}$, and are opposite to each other whenever $\bv\notin\Delta_{I,J}$. If $\bv=\bb_l$, then, vice versa,  the signs $(-1)^{|J\cap X_k^{(1)}|}$ and $(-1)^{|J\cap X_k^{(2)}|}$  are opposite to each other whenever $\bv\in\Delta_{I,J}$, and coincide to each other whenever $\bv\notin\Delta_{I,J}$. Hence, 
$$
S_k^{(1)}=\pm S_k^{(2)} +\frac{\sigma_{n-2}}{\sigma_{n-3}}\,\widetilde{S}_k,
$$
where we choose the sign~$-$ if $\bv=\ba_l$, and the sign~$+$ if $\bv=\bb_l$. By the inductive assumption, $\widetilde{S}_k=\sigma_{n-3}$ if $\widetilde{X}_k=\emptyset$, and $\widetilde{S}_k=0$ if $\widetilde{X}_k\ne\emptyset$. Since $\widetilde{X}_k=X_k^{(1)}$ and the set~$X_k^{(2)}$ is non-empty, this immediately implies the required equality $\overline{S}_k^{(1)}=\overline{S}_k^{(2)}$.
\end{proof}

\begin{figure}
\begin{center}
\includegraphics[scale=.85]{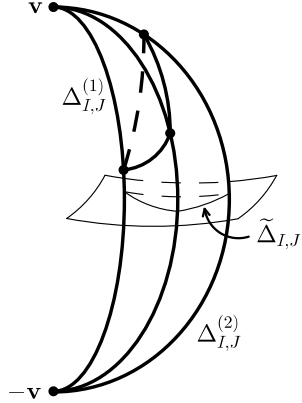}
\end{center}
\caption{The union of the faces~$\Delta_{I,J}^{(1)}$ and~$\Delta_{I,J}^{(2)}$}\label{fig_dolka}
\end{figure}

Now, we are ready to complete the proof of Theorem~\ref{theorem_n-2}. Since $S_k(\bs^*)=0$ and $X_k(\bs^*)\ne\emptyset$ for $k=1,\ldots,n$, Lemma~\ref{lem_n-2} implies that $\overline{S}_k(\bs)=0$ for all~$k$ and~$\bs$. Hence, $S_k(\bs)=0$ whenever $X_k(\bs)\ne\emptyset$, and $S_k(\bs)=\sigma_{n-2}$ whenever $X_k(\bs)=\emptyset$. Therefore, the assertion of Theorem~\ref{theorem_n-2} is true for all cross-polytopes~$P^{\bs}_u$, in particular, for the initial cross-polytope~$P_u$.
\end{proof} 
 
\subsection{Volumes of flexible cross-polytopes of the simplest type} First of all, let us give a rigorous  definition of the volume of a not necessarily embedded polyhedron $P\colon K\to \X^n$.

Suppose that $\X^n=\E^n$ or~$\Lambda^n$. For each point~$\bx\in\X^n\setminus P(K)$, we denote by $\varkappa(\bx)$ the algebraic intersection index of a continuous curve~$\gamma$ from~$\bx$ to the infinity with the oriented cycle~$P(K)$. Obviously, this intersection index is independent of the choice of the curve~$\gamma$.  Then $\varkappa$ is an almost everywhere defined integral-valued piecewise constant function on~$\X^m$ with compact support.  By definition, the \textit{generalized oriented volume\/} of the polyhedron~$P$ is the number
\begin{equation}\label{eq_OV}
\CV(P)=\int_{\X^n} \varkappa(\bx)\,\mathrm{d}V(\bx),
\end{equation}
where $\mathrm{d}V$ is the standard volume form on~$\X^n$.
 
The case $\X^n=\bS^n$ is somewhat more complicated, since the sphere~$\bS^n$ does not contain the infinity, which implies that there is no canonical choice of the function~$\varkappa$. Consider an arbitrary integral-valued piecewise constant function~$\varkappa$ defined on~$\bS^m\setminus P(K)$ such that, for each two points~$\bx$ and~$\by$, the difference $\varkappa(\bx)-\varkappa(\by)$ is equal to the algebraic intersection index of a curve~$\gamma$ from~$\bx$  to~$\by$ with the oriented cycle~$P(K)$. Such function exists an is unique up to the addition of an integral constant. Hence formula~\eqref{eq_OV} determines the number~$\CV(P)$ up to the addition of a multiple of the volume~$\sigma_n$ of the sphere~$\bS^n$.  The obtained well-defined element $\CV(P)\in\R/(\sigma_n\Z)$ will be called the \textit{generalized oriented volume\/} of the spherical polyhedron~$P$. 

Now, let $P_u$ be a flexible cross-polytope of the simplest type in~$\X^n$ corresponding to the set of data $(G,\blambda,\bs)$. As before, we assume that $0<\lambda_1<\cdots<\lambda_n$.

\begin{theorem}\label{theorem_vol} The generalized volume of any flexible cross-polytope of the simplest type in the spaces\/~$\E^n$ and\/~$\Lambda^n,$ $n\ge 3,$ is identically equal to zero during the flexion. The generalized volume of any flexible cross-polytope of the simplest type in the sphere\/~$\bS^n,$ $n\ge 2,$ is identically equal to zero during the flexion, except for the following cases:
\begin{itemize}
\item If\/ $s_1s_1'=\cdots=s_ns_n'=1,$ then
\begin{equation}\label{eq_vol1}
\CV(P_u)=\frac{s_1\sigma_n}{\pi}\arctan(\lambda_1u).
\end{equation}
\item If\/ $s_1s_1'=\cdots=s_ns_n'=-1$, then
\begin{equation}\label{eq_vol2}
\CV(P_u)=\frac{\sigma_n}{2}+\frac{s_n\sigma_n}{\pi}\arctan(\lambda_nu).
\end{equation}
\item If\/ $s_1s_1'=\cdots=s_ks_k'=-1,$ $s_{k+1}s'_{k+1}=\cdots=s_{n}s'_{n}=1$ for some $k,$ $1\le k<n,$ then
\begin{equation}\label{eq_vol3}
\CV(P_u)=\frac{\sigma_n}{\pi}\bigl(s_k\arctan(\lambda_ku)+s_{k+1}\arctan(\lambda_{k+1}u)\bigr).
\end{equation}
\end{itemize}
\end{theorem}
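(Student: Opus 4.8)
The plan is to differentiate $\CV(P_u)$ by the Schl\"afli formula and evaluate the resulting sum using Lemma~\ref{lem_dihedral} and Theorem~\ref{theorem_n-2}.

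\emph{Step 1 (Schl\"afli formula).} For a flexible polyhedral surface $P_u$ in $\X^n$, the Schl\"afli differential formula, in the form appropriate for the generalized oriented volume and the oriented dihedral angles of Section~\ref{section_defin}, reads
\[
\kappa\,\frac{d}{du}\CV(P_u)=\frac{1}{n-1}\sum_{\dim F=n-2}V_{n-2}(F)\,\frac{d\psi_F(u)}{du},\qquad
\kappa=\begin{cases}1,&\X^n=\bS^n,\\0,&\X^n=\E^n,\\-1,&\X^n=\Lambda^n,\end{cases}
\]
where for $n=2$ the right-hand side is $\sum_v\psi_v'(u)$, the sum over the vertices. (One obtains this from the classical simplex form by writing $\CV(P_u)$ as a signed sum of volumes of the cones over the facets of $P_u$ from a fixed point and checking that the interior contributions cancel; cf.\ \cite{Ale97}.) Since all faces of $P_u$ remain isometric to themselves, the numbers $V_{n-2}(F)$ are constant in $u$; and by Lemma~\ref{lem_dihedral}, writing $F=\Delta_{I,J}$ and letting $k$ be the unique element of $[n]\setminus(I\cup J)$, we have $\psi_F'(u)=(-1)^{|J\cap X_k|}s_k\varphi_k'(u)$ with $\varphi_k'(u)=2\lambda_k/(1+\lambda_k^2u^2)$, the additive constant $0$ or $\pi$ in \eqref{eq_dihedral} disappearing. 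Grouping the sum over $F$ according to $k$ gives
\[
\sum_{\dim F=n-2}V_{n-2}(F)\,\psi_F'(u)=\sum_{k=1}^{n}s_k\,\varphi_k'(u)\sum_{I\sqcup J=[n]\setminus\{k\}}(-1)^{|J\cap X_k|}V_{I,J},
\]
and the inner sum is exactly the left-hand side of the relations of Theorem~\ref{theorem_n-2} (which also hold for $n=2$, where every vertex has volume $1$ and $\sigma_0=2$).

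\emph{Step 2 (the hyperbolic and Euclidean cases).} When $\X^n=\Lambda^n$, relation \eqref{eq_cod2_rel_EL} makes the inner sum vanish for every $k$; hence $\CV(P_u)$ is constant, and since $P_0$ is flat (contained in a hyperplane, so the region it bounds has measure zero) we get $\CV(P_u)\equiv0$. When $\X^n=\E^n$ the Schl\"afli identity is vacuous ($\kappa=0$), so here I would instead deduce $\CV(P_u)\equiv0$ by continuity from the Lobachevsky case, realizing a Euclidean flexible cross-polytope of the simplest type as a limit of such cross-polytopes in Lobachevsky spaces whose curvature tends to $0$; alternatively one may simply invoke the Bellows Theorem (\cite{Sab96}--\cite{Sab98b} for $n=3$; \cite{Gai11},\cite{Gai12} for $n\ge4$) together with $\CV(P_0)=0$. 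This Euclidean point is the only place I expect to need extra care, but it involves no new idea.

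\emph{Step 3 (the spherical case).} When $\X^n=\bS^n$, Theorem~\ref{theorem_n-2} gives that the inner sum equals $0$ if $X_k\neq\emptyset$ and $\sigma_{n-2}$ if $X_k=\emptyset$, so
\[
\frac{d}{du}\CV(P_u)=\frac{\sigma_{n-2}}{n-1}\sum_{k:\,X_k=\emptyset}s_k\,\varphi_k'(u).
\]
Integrating and using the identity $\dfrac{2\sigma_{n-2}}{n-1}=\dfrac{\sigma_n}{\pi}$ (immediate from $\Gamma(\tfrac{n+1}{2})=\tfrac{n-1}{2}\,\Gamma(\tfrac{n-1}{2})$), we obtain
\[
\CV(P_u)=\CV(P_0)+\frac{\sigma_n}{\pi}\sum_{k:\,X_k=\emptyset}s_k\arctan(\lambda_ku)\quad\text{in }\R/\sigma_n\Z.
\]
A direct inspection of the definition of $X_k$ shows that $X_k=\emptyset$ for some $k$ if and only if the sequence $(s_1s_1',\dots,s_ns_n')$ is non-decreasing, i.e.\ of the form $(-1)^a(+1)^b$ with $a+b=n$, and that then $\{k:X_k=\emptyset\}=\{a,a+1\}\cap[n]$; thus this set is $\{1\}$ when all $s_is_i'=1$, is $\{n\}$ when all $s_is_i'=-1$, equals $\{k,k+1\}$ when $s_1s_1'=\cdots=s_ks_k'=-1$ and $s_{k+1}s_{k+1}'=\cdots=s_ns_n'=1$ with $1\le k<n$, and is empty otherwise. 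Finally, being flat, $P_0$ has $\CV(P_0)\in\R/\sigma_n\Z$ equal to $0$ or $\sigma_n/2$ according as $\deg P_0$ is even or odd; by Lemma~\ref{lem_deg}, $\deg P_0=1$ exactly when all $s_is_i'=-1$, and $\deg P_0=0$ otherwise, whence $\CV(P_0)=\sigma_n/2$ in the second bullet and $\CV(P_0)=0$ in all other cases. Substituting these two data into the displayed formula for $\CV(P_u)$ yields $\CV(P_u)\equiv0$ when $\{k:X_k=\emptyset\}=\emptyset$ (then the pattern is not $(-1)^n$, so some $s_is_i'=1$ and $\CV(P_0)=0$), and the three formulas \eqref{eq_vol1}, \eqref{eq_vol2}, \eqref{eq_vol3} in the remaining three cases. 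The principal obstacle, besides the honest treatment of the Euclidean case, is the careful bookkeeping of the orientation signs and of the $\sigma_n$-ambiguity in Step~3, which the orientation conventions fixed in Sections~\ref{section_dihedral} and~\ref{section_volume} are designed to control.
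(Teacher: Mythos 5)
Your proposal is correct and follows essentially the same route as the paper: Schl\"afli's formula combined with Lemma~\ref{lem_dihedral} and the relations of Theorem~\ref{theorem_n-2}, the identification of the cases where some $X_k$ is empty, and Lemma~\ref{lem_deg} to fix $\CV(P_0)$; the Euclidean case is likewise disposed of via the Bellows Theorem. The only cosmetic differences are that the paper treats $n=2$ on the sphere by direct inspection of the three quadrangle types rather than by extending the Schl\"afli argument, and it does not need your curvature-limit alternative for $\E^n$.
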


\begin{proof}
In the Euclidean space~$\E^n$, the volume of any flexible polyhedron is constant during the flexion, see~\cite{Sab96}--\cite{Sab98b}, \cite{CSW97} for $n=3$, \cite{Gai11} for $n=4$, and~\cite{Gai12} for $n\ge 5$. Hence, $\CV(P_u)=\CV(P_0)=0$. The latter equality is true, since the cross-polytope~$P_0$ is flat, i.\,e., is contained in the hyperplane~$\E^{n-1}$.

In the two-dimensional sphere $\bS^2$, flexible cross-polytopes of the simplest type are exactly flexible quadrangles shown in Fig.~\ref{fig_3polyg}. For each of them the assertion of the theorem can be checked immediately.

Suppose that $\X^n=\Lambda^n$ or~$\bS^n$, $n\ge 3$. We introduce the parameter~$\varepsilon$ that is equal to~$1$ for~$\bS^n$ and to~$-1$ for~$\Lambda^n$. Schl\"afli's formula is the following classical formula for the differential of the oriented volume of an arbitrary polyhedron in~$\X^n$ that is deformed preserving its combinatorial type:
$$
\mathrm{d}\CV=\frac{\varepsilon}{n-1}\sum_FV_{n-2}(F)\,\mathrm{d}\psi_F,
$$
where the sum is taken over all  $(n-2)$-dimensional faces~$F$, and $\psi_F$ is the oriented dihedral angle of the polyhedron at the face~$F$. In our case, this formula takes the form
$$
\mathrm{d}\CV(P_u)=\frac{\varepsilon}{n-1}\sum_{(I,J)}V_{I,J}\,\mathrm{d}\psi_{I,J}(u),
$$
where the sum is taken over all pairs of non-intersecting subsets $I,J\subset[n]$ such that $|I|+|J|=n-1$.

\begin{remark}
Schl\"afli's formula is usually written for convex polytopes, see, for instance,~\cite{AVS88}. Schl\"afli's formula for arbitrary non-degenerate polyhedra in the sense of our definition in Section~\ref{section_defin} follows from Schl\"afli's formula for simplices, since the indicator function~$\varkappa_P(\bx)$ of any $n$-dimensional polyhedron~$P\colon K\to\X^n$ can be represented as an algebraic sum of the indicator functions of simplices with vertices at vertices of~$P$. Actually, if $v_0$ is an arbitrary vertex of the pseudo-manifold~$K$, and $\bigl[v_1^{(j)}\ldots v_n^{(j)}\bigr]$, $j=1,\ldots,N$,  are all positively oriented $(n-1)$-dimensional simplices of~$K$ that do not contain the vertex~$v_0$, then
$$
\varkappa_P(\bx)=\sum_{j=1}^N\eta_j\varkappa_{\bigl[P\bigl(v_1^{(j)}\bigr)\ldots P\bigl(v_n^{(j)}\bigr)\bigr]}(\bx),
$$
where $\eta_j$ is the sign of the orientation of the simplex $\bigl[P\bigl(v_1^{(j)}\bigr)\ldots P\bigl(v_n^{(j)}\bigr)\bigr]$.
\end{remark}

If $\X^n=\Lambda^n$, then, using fomulae~\eqref{eq_dihedral} and~\eqref{eq_cod2_rel_EL}, we obtain that $\mathrm{d}\CV(P_u)=0$. Hence, $\CV(P_u)=\CV(P_0)=0$.

Similarly, if $\X^n=\bS^n$, then, using fomulae~\eqref{eq_dihedral}, \eqref{eq_cod2_rel_S1}, and~\eqref{eq_cod2_rel_S2}, we obtain that
$$
\mathrm{d}\CV(P_u)=\frac{2\sigma_{n-2}}{n-1}\sum_{k\colon X_k=\emptyset}s_k\,\mathrm{d}\arctan(\lambda_ku).
$$
Since $\sigma_n=2\pi\sigma_{n-2}/(n-1)$, this can be rewritten in the form
$$
\CV(P_u)=\CV(P_0)+\frac{\sigma_{n}}{\pi}\sum_{k\colon X_k=\emptyset}s_k\arctan(\lambda_ku).
$$

The boundary of the cross-polytope~$P_0$ is contained in the great sphere~$\bS^{n-1}$. Hence, the oriented volume~$\CV(P_0)$ is equal to~$0$ if $\deg P_0=0$, and is equal to~$\sigma_n/2$ if $\deg P_0= 1$. (In the latter case, the sign is irrelevant, since the volume is defined up to a multiple of~$\sigma_n$.) By Lemma~\ref{lem_deg}, we obtain that $\CV(P_0)=\sigma_n/2$ if $s_is'_i=-1$ for all~$i$, and $\CV(P_0)=0$ in all other cases.

It is checked immediately that all sets~$X_k$ are non-empty always except for the three special cases listed in Theorem~\ref{theorem_vol}. Hence the oriented volume of~$P_u$ is identically equal to zero always except for these three cases. If $s_1s_1'=\cdots=s_ns_n'=1$, then $X_1$ is the only empty set among the sets~$X_k$, and we obtain~\eqref{eq_vol1}. If $s_1s_1'=\cdots=s_ns_n'=-1$, then $X_n$ is the only empty set among the sets~$X_k$, and we obtain~\eqref{eq_vol2}. If $s_1s_1'=\cdots=s_ks_k'=-1$ и $s_{k+1}s_{k+1}'=\cdots=s_ns_n'=1$ for a~$k$ such that $1\le k<n$, then $X_k=X_{k+1}=\emptyset$, and all other sets~$X_j$ are non-empty. Hence, we obtain formula~\eqref{eq_vol3}.
\end{proof}

\begin{remark}
In the Euclidean case, Schl\"afli's formula does not allow to obtain any expression for the volume of a polyhedron, since the volume does not enter this formula.  Instead, in the Euclidean case, Schl\"afli's formula implies that the total mean curvature of a flexible polyhedron is constant during the flexion, see~\cite{Ale85}.
\end{remark}

\begin{cor}
The Modified Bellows Conjecture \textnormal{(}Conjecture~\ref{con_mod}\textnormal{)} is true for all spherical flexible cross-polytopes of the simplest type.
\end{cor}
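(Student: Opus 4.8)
The plan is to obtain the corollary as an immediate consequence of the volume computation in Theorem~\ref{theorem_vol}, combined with the fact that antipodal replacements of vertices act on a flexible cross-polytope of the simplest type simply by changing the row of signs. First I would recall from Theorem~\ref{theorem_vol} that, for a spherical flexible cross-polytope of the simplest type $P_u$ with data $(G,\blambda,\bs)$, the generalized volume $\CV(P_u)$ is identically zero --- hence constant --- unless the sequence of products $s_1s_1',\dots,s_ns_n'$ is nondecreasing under the identification $-1<+1$, i.e.\ is one of the three patterns appearing in~\eqref{eq_vol1}, \eqref{eq_vol2}, and~\eqref{eq_vol3}; equivalently, unless one of the sets $X_k$ is empty (this last equivalence is exactly the passage ``all sets $X_k$ are non-empty always except for the three special cases'' in the proof of Theorem~\ref{theorem_vol}). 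In particular, if $P_u$ is not of one of these three types, the Modified Bellows Conjecture holds for it with no antipodal replacement needed at all.

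So I may assume that $P_u$ is of one of the three exceptional types. By~\eqref{eq_param_SL}, replacing the vertex $\ba_i(u)=s_i\bc_i/|\bc_i|$ (respectively, $\bb_i(u)=s_i'\bd_i(u)/|\bd_i(u)|$) of $P_u$ by its antipode reverses the sign $s_i$ (respectively, $s_i'$) and changes nothing else; in particular it leaves the data $(G,\blambda)$ untouched, so the result is again a flexible cross-polytope of the simplest type (for spheres the signs are unconstrained, and non-degeneracy is retained since each $\lambda_k\ne0$). Concretely, I would pass to the new cross-polytope $\widetilde P_u$ obtained from $P_u$ by replacing $\bb_1(u)$ by its antipode if $s_1s_1'=-1$, replacing $\bb_2(u)$ by its antipode if $s_2s_2'=+1$, and leaving the remaining vertices as they are (recall $n\ge2$). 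Then $\widetilde P_u$ has $s_1s_1'=+1$ and $s_2s_2'=-1$, so its sequence of products $s_1s_1',s_2s_2',\dots$ decreases from the first to the second term, hence is not nondecreasing, hence is none of the three exceptional patterns, hence all of its sets $X_k$ are non-empty. Applying Theorem~\ref{theorem_vol} to $\widetilde P_u$ gives $\CV(\widetilde P_u)\equiv0$, so the generalized volume of $\widetilde P_u$ is constant during the flexion. Since $\widetilde P_u$ is obtained from $P_u$ by antipodal replacements of at most two vertices, this is precisely the assertion of the Modified Bellows Conjecture for $P_u$.

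I do not expect any genuine obstacle beyond Theorem~\ref{theorem_vol} itself, which may be used freely here: the entire argument rests on the elementary observation that a $\{+1,-1\}$-sequence of length $n\ge2$ can always be made non-monotone by reversing at most two of its entries, together with the already-established fact that such reversals are realized geometrically by antipodal replacements of vertices and preserve the class of flexible cross-polytopes of the simplest type. The only point requiring a moment's care is the bookkeeping of how the sets $X_k$ --- and hence the volume formula of Theorem~\ref{theorem_vol} --- change under a single sign reversal, and this is already contained in the proof of that theorem, in particular in the fact that $\CV(P_u)$ fails to be constant precisely when some $X_k$ is empty.
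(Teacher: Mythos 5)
Your proposal is correct and is essentially the paper's own argument: the paper likewise replaces vertices by antipodes so as to arrange $s_1s_1'=1$ and $s_2s_2'=-1$, which excludes all three exceptional patterns of Theorem~\ref{theorem_vol}, and then concludes that the generalized volume of the resulting cross-polytope is identically zero. The extra detail you supply (that antipodal replacement acts only on the sign row via~\eqref{eq_param_SL}, and that the exceptional patterns are exactly the nondecreasing $\pm1$-sequences) is accurate and consistent with the paper.
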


\begin{proof}
We can always replace by their antipodes several vertices of a spherical flexible cross-polytope of the simplest type~$P_u$ so that to obtain that the signs~$s_j$, $s_j'$ corresponding to the obtained cross-polytope satisfy~$s_1s_1'=1$, $s_2s_2'=-1$. Then, by Theorem~\ref{theorem_vol}, the generalized oriented volume of the obtained flexible cross-polytope is identically equal to zero during the flexion.
\end{proof}

The author is grateful to the referee for a series of useful comments.

\end{document}